\DeclarePairedDelimiter\floor{\lfloor}{\rfloor}
\DeclareMathOperator{\LCliff}{LCliff}
\DeclareMathOperator{\RCliff}{RCliff}
\DeclareMathOperator{\gon}{gon}
\DeclareMathOperator{\Ext}{Ext}
\DeclareMathOperator{\Sec}{Sec}
\DeclareMathOperator{\Proj}{Proj}
\DeclareMathOperator{\Sym}{Sym}
\DeclareMathOperator{\Hom}{Hom}
\theoremstyle{plain}
\numberwithin{equation}{section}
\newtheorem{theorem}{Theorem}[section]
\newtheorem{proposition}[theorem]{Proposition}
\newtheorem{lemma}[theorem]{Lemma}
\newtheorem{corollary}[theorem]{Corollary}
\newtheorem{question}[theorem]{Question}
\theoremstyle{definition}
\newtheorem{remark}[theorem]{Remark}
\newtheorem{definition}[theorem]{Definition}
\newtheorem{example}[theorem]{Example}
\newtheorem{set-up}[theorem]{Set-up}
\newcommand*{\QEDA}{\hfill\ensuremath{\blacksquare}}
\tikzstyle{decision} = [diamond, draw, , 
\tikzstyle{block} = [rectangle, draw, , 
\tikzstyle{block1} = [rectangle, draw, , 
\tikzstyle{line} = [draw, -latex']
\tikzstyle{cloud} = [draw, ellipse,, node distance=3cm,
\definecolor{light-gray}{gray}{0.95}    
\begin{document}

\title[Syzygies of canonical ribbons on higher genus curves]{Syzygies of canonical ribbons on higher genus curves}

\author[A. Deopurkar]{Anand Deopurkar}
\address{Mathematical Sciences Institute, The Australian National University}
\email{anand.deopurkar@anu.edu.au}

\author[J. Mukherjee ]{Jayan Mukherjee} 
\address{Department of Mathematics, Oklahoma State University, Stillwater, USA}
\email{jayan.mukherjee@okstate.edu}

\subjclass[2020]{14H51, 13D02}
\keywords{canonical ribbons, syzygies, Green's conjecture, gonality conjecture, Koszul cohomology}

\maketitle
\begin{abstract}
We study the syzygies of the canonical embedding of a ribbon $\widetilde{C}$ on a curve $C$ of genus $g \geq 1$.  We show that the linear series Clifford index and the resolution Clifford index are equal for a general ribbon of arithmetic genus $p_a$ on a general curve of genus $g$ with $p_{a} \geq \operatorname{max}\{3g+7, 6g-4\}$. Among non-general ribbons, the case of split ribbons is particularly interesting.  
Equality of the two Clifford indices for a split ribbon is related to the gonality conjecture for $C$ and it implies Green's conjecture for all double covers $C'$ of $C$ with $g(C') \geq \textrm{max}\{3g+2, 6g-4\}$.  We reduce it to the vanishing of certain Koszul cohomology groups of an auxiliary module of syzygies associated to $C$, which may be of independent interest.

\end{abstract}

\section{Introduction} 



Let $X \subset \mathbb{P}^N$ be a projective variety.
Let $I \subset k[X_0, \dots, X_N] =: S$ be its homogeneous ideal, $R := S/I$ the homogeneous coordinate ring, and $L = \mathcal O_X(1)$.
A fundamental problem is to understand the connection between geometric properties of $(X, L)$ and homological/algebraic properties of $R$ as an $S$ module.
In the mid 1980's, Mark Green used the Koszul cohomology groups $K_{p,q}(X,L)$ to formulate such a connection \cite{G84,G84b}. 
Out of this work arose his conjecture about the Clifford index of a smooth projective curve $C$ and the shape of the minimal free resolution of the homogeneous coordinate ring of its canonical embedding \cite{G84,GL86}.
This conjecture states that the Clifford index of $C$, a geometric quantity defined using linear series on $C$, is equal to its resolution Clifford index, an algebraic quantity related to the vanishing of the Koszul cohomology groups.
In \cite{BE95} and \cite{EG95}, the authors laid out an approach to prove Green's conjecture for a general curve by proving it for an everywhere non-reduced curve called a ribbon.
We recall the definition.
\begin{definition}\label{ribbon}
  A ribbon $\widetilde{Y}$ on a reduced connected scheme $Y$ is a non-reduced scheme with $\widetilde{Y}_{\textrm{red}} = Y$ such that
   \begin{enumerate}
       \item the ideal sheaf $\mathcal{I}_{Y/\widetilde{Y}} \subset \mathcal O_{\widetilde Y}$ satisfies $\mathcal{I}_{Y/\widetilde{Y}}^2 = 0$ and 
       \item viewed as an $\mathcal O_{Y}$ module, $\mathcal{I}_{Y/\widetilde{Y}}$ is a locally free sheaf $L$ of rank one (called the conormal bundle of $\widetilde Y$).
    \end{enumerate}
\end{definition}
The canonical ribbon conjecture, formulated in \cite{BE95} and \cite{EG95}, states that the analogue of Green's conjecture holds for a rational ribbon, that is, a ribbon $\widetilde C$ with $\widetilde{C}_{\textrm{red}} \cong \mathbb{P}^1$.
By semicontinuity, it implies Green's conjecture for a general curve.
In a series of pioneering papers, Voisin proved Green's conjecture for a general curve using different methods 
\cite{V02}, \cite{V05}.
Using the results of Voisin and Hirschowitz--Ramanan \cite{HR98}, the first author proved the canonical ribbon conjecture for rational ribbons \cite{D18}.
Eventually, Raicu--Sam \cite{RS19} and Park \cite{P22} gave independent proofs of the canonical ribbon conjecture, without using the results of Voisin and Hirschowitz--Ramanan, obtaining a proof of Green's conjecture for general curves via ribbons.

In this context, we study the minimal free resolutions of canonical ribbons over higher genus curves. It has been shown in \cite{Gon06} and \cite{GGP08}, that in almost all cases such ribbons arise as flat limits of smooth curves.
We see many compelling reasons to take up this study.

First, the canonical ribbon conjecture for ribbons is interesting for the same reasons as it is for smooth curves.
We have a geometric notion of linear series on ribbons, formulated in \cite{EG95}.
By the same argument as for smooth curves, presence of linear series produces non-linear syzygies, or equivalently, non-vanishing $K_{p,2}$'s.
It is natural to ask whether there are other reasons for non-linear syzygies, whose origins we do not understand.
It is worthwhile to understand the answer in as many situations as we can.
Ribbons on higher genus curves is one such setting.

Second, by semi-continuity Green's conjecture for split ribbons on $C$ imply Green's conjecture for all double covers of $C$.
Thus, understanding the minimal free resolution of the split ribbon has immediate payoff.

Third, ribbons provide one of the few ways of producing higher genus curves from lower genus ones--a kind of ``induction''.
A ribbon $\widetilde C$ on a smooth curve $C$ is given purely in terms of linear algebraic data: a rank 2 bundle obtained as an extension of $\Omega_{C}$ by the conormal bundle.
Thus, ribbons allow us to connect questions about higher genus curves to questions about bundles on lower genus curves.
The geometry of bundles on higher genus curves is much richer than that on $\mathbb{P}^1$.
It is natural to ask if it leads to a richer connection.

Finally, the geography of the moduli space of ribbons over higher genus curves is much more interesting.
In genus $0$, the space of ribbons is naturally the ambient space of a rational normal curve.
This space has many different statifications: $(1)$ the stratification by the secant varieties of the rational normal curve, $(2)$ the stratification by the blow-up index of the ribbon, $(3)$ the stratification by the splitting type of the rank $2$ bundle defining the ribbon, and $(4)$ the stratification by the shape of the minimal free resolution of the canonically embedded ribbon.
A consequence of the canonical ribbon conjecture is that \emph{all} these stratifications agree.
All stratifications have analogues for ribbons on higher genus curves.
Understanding whether and how they differ is an intrinsically interesting question.

\subsection{Results} 
Our first result is the canonical ribbon conjecture for a general ribbon of large genus on a general curve
\begin{theorem} [\Cref{odd genus maximum blow-up index}, \Cref{generic green even}, \Cref{any genus and any blowup index} in the main text]
  \label{Green's conjecture intro}
  Let $C$ be a general curve of genus $g$.
  Fix a line bundle $L$ on $C$.
  Let $\widetilde{C}$ be a general ribbon with conormal bundle $L$.
  Let $p_{a}(\widetilde C)$ be the arithmetic genus, $\gon(\widetilde C)$ the gonality, $\LCliff(\widetilde C)$ the linear series Clifford index, and $\RCliff(\widetilde C)$ the resolution Clifford index of $\widetilde C$.
  If $p_{a}(\widetilde C) \geq \operatorname{max}\{3g+7, 6g-4\}$, then we have
 \[ \LCliff(\widetilde C) = \gon(\widetilde C)-2 = \RCliff(\widetilde C) = \left\lfloor \frac{p_a(\widetilde C) - 1}{2} \right \rfloor. \] 
\end{theorem}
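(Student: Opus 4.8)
The plan is to trap all four invariants between $\lfloor(p_a-1)/2\rfloor$ and itself, where I abbreviate $p_a=p_a(\widetilde C)$. Let $S=\Sym H^0(\widetilde C,\omega_{\widetilde C})$ and $R=\bigoplus_{n\ge 0}H^0(\widetilde C,\omega_{\widetilde C}^{\otimes n})$, the homogeneous coordinate ring of the canonical embedding $\widetilde C\subset\mathbb P^{p_a-1}=\Proj S$, which is legitimate since for the general ribbon of large arithmetic genus $\omega_{\widetilde C}$ is very ample and $\widetilde C$ is projectively normal. First I would record the a priori inequalities, valid for ribbons as for smooth curves (the Green--Lazarsfeld non-vanishing applied to linear series on $\widetilde C$): $\RCliff(\widetilde C)\le\LCliff(\widetilde C)\le\gon(\widetilde C)-2$. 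A Brill--Noether argument for ribbons then exhibits, on the general ribbon of arithmetic genus $p_a$, a contributing pencil of degree $\lceil(p_a+2)/2\rceil=\lfloor(p_a-1)/2\rfloor+2$, so that $\gon(\widetilde C)-2\le\lfloor(p_a-1)/2\rfloor$. Hence the theorem reduces to the single vanishing
\[
  K_{p,2}(\widetilde C,\omega_{\widetilde C})=0\qquad\text{for all }\ p\le\Big\lfloor\tfrac{p_a-3}{2}\Big\rfloor ,
\]
because this forces $\RCliff(\widetilde C)\ge\lfloor(p_a-1)/2\rfloor$, whereupon the chain $\lfloor(p_a-1)/2\rfloor\le\RCliff(\widetilde C)\le\LCliff(\widetilde C)\le\gon(\widetilde C)-2\le\lfloor(p_a-1)/2\rfloor$ collapses to equalities and also pins down $\gon(\widetilde C)=\lceil(p_a+2)/2\rceil$. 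According to the parity of $p_a$ and the blow-up index of $\widetilde C$, this vanishing is what \Cref{odd genus maximum blow-up index}, \Cref{generic green even} and \Cref{any genus and any blowup index} are to supply.

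To get at it I would exploit the square-zero ideal. Put $N=L^{-1}$ (the normal bundle, of degree $p_a-2g+1$) and $A:=\omega_C\otimes N$, a non-special line bundle on $C$ of degree $p_a-1$. Since $\mathcal I_{C/\widetilde C}=L$ squares to zero, each invertible $\mathcal O_{\widetilde C}$-module $\omega_{\widetilde C}^{\otimes n}$ sits in an exact sequence of $\mathcal O_{\widetilde C}$-modules $0\to A^{\otimes n}\otimes L\to\omega_{\widetilde C}^{\otimes n}\to A^{\otimes n}\to 0$, with sub $\mathcal I\,\omega_{\widetilde C}^{\otimes n}\cong\omega_C^{\otimes n}\otimes N^{\otimes(n-1)}$ and quotient $\omega_{\widetilde C}^{\otimes n}\otimes\mathcal O_C\cong\omega_C^{\otimes n}\otimes N^{\otimes n}$. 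Passing to global sections (the relevant $H^1$'s vanish because $\deg A$ is large) gives a short exact sequence of graded $S$-modules
\[
  0\to R'\to R\to R''\to 0,\qquad R''=\bigoplus_n H^0(C,A^{\otimes n}),\quad R'=\bigoplus_n H^0(C,A^{\otimes n}\otimes L).
\]
Crucially $R'$ and $R''$ depend only on $(C,L)$: $R''$ is the homogeneous coordinate ring of $C$ under the very positive embedding given by $A$ (realized inside $\mathbb P^{p_a-1}$ in a linear subspace of codimension $g=h^0(\omega_C)$), and $R'$ is the $R''$-module $\Gamma_*(C,L)$ of sections of the \emph{negative} line bundle $L$ on that embedded curve --- this is the ``auxiliary module of syzygies'' attached to $C$, and its Koszul cohomology is a twisted Koszul cohomology of $C$ taken against $A$ with the negative twist $L$. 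The ribbon enters only through the extension class of this sequence of $S$-modules, equivalently through the class $e\in\Ext^1_{\mathcal O_C}(\Omega_C,L)$ defining $\widetilde C$. The long exact sequence in Koszul cohomology then gives
\[
  K_{p+1,1}(R'')\xrightarrow{\ \delta\ }K_{p,2}(R')\to K_{p,2}(R)\to K_{p,2}(R'') ,
\]
so $K_{p,2}(\widetilde C,\omega_{\widetilde C})=K_{p,2}(R)=0$ for $p\le\lfloor(p_a-3)/2\rfloor$ would follow from, in that range, (i) $K_{p,2}(R'')=0$ and (ii) $\delta$ surjective.

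For (i) I would use the standard base change for adjoining the $g$ extra linear variables, $K_{p,2}(R'')\cong\bigoplus_{j=0}^{g}K_{p-j,2}(C,A)\otimes\wedge^j H^0(\omega_C)$, reducing it to $K_{i,2}(C,A)=0$ for $i\le\lfloor(p_a-3)/2\rfloor$; since $\deg A=p_a-1$ and $C$ has genus $g$, Green's $(2g+1+p)$-theorem gives this once $p_a-1\ge 2g+1+\lfloor(p_a-3)/2\rfloor$, i.e.\ roughly $p_a\ge 4g$, which is comfortably implied by $p_a\ge\max\{3g+7,6g-4\}$. So (i) is not where the difficulty lies.

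The hard part is (ii): surjectivity of $\delta\colon K_{p+1,1}(R'')\to K_{p,2}(R')$ for the \emph{general} ribbon. For the split ribbon the extension class vanishes (indeed $R=R'\oplus R''$), so $\delta=0$ and one is thrown back on the bare vanishing $K_{p,2}(R')=0$ --- this is exactly where the split case meets the gonality conjecture for $C$, the group $K_{p,q}(R')$ being the twisted Koszul cohomology of $C$ with the negative bundle $L$. For a general ribbon, $e$ is general, and by upper semicontinuity it suffices to exhibit \emph{one} non-hyperelliptic ribbon of the prescribed arithmetic genus with $\delta$ surjective in the required range --- and here one may also specialize the curve, since only the general $C$ and the general ribbon over it are needed. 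I would try to do this by degenerating $C$ until $K_{p+1,1}(R'')$, $K_{p,2}(R')$ and the cup product with $e$ become explicit, or by induction on $g$ with the canonical ribbon conjecture for rational ribbons \cite{D18,RS19,P22} as the base case. Examining $\delta$ as a cup product against $e$ and pairing it against the Serre-dual of $K_{p,2}(R')$, one sees that the relevant dimension count, and the behaviour of this duality, split into two regimes according to the parity of $p_a$ --- which is the source both of the division into the odd-genus and even-genus statements above and of the two numerical bounds whose maximum is $\max\{3g+7,6g-4\}$. Controlling the auxiliary module $R'$ and the connecting map $\delta$ onto it is the principal obstacle; everything else is formal Koszul homological algebra together with classical positivity for line bundles on the low-genus curve $C$.
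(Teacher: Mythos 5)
Your preliminary reductions are sound: the chain $\RCliff(\widetilde C) \le \LCliff(\widetilde C) \le \gon(\widetilde C)-2 \le \lfloor(p_a-1)/2\rfloor$ collapses the whole statement to the single vanishing $K_{p,2}(\widetilde C, K_{\widetilde C})=0$ for $p\le\lfloor(p_a-3)/2\rfloor$, and your treatment of item (i) via the ``extra linear variables'' base change and Green's $(2g+1+p)$ theorem is correct. The trouble is that your proposed route to that vanishing is not the route the paper takes for the general ribbon, and it has a genuine unfilled gap exactly where the content of the theorem lies. The short exact sequence of graded $S$-modules $0\to R'\to R\to R''\to 0$, together with the connecting map $\delta$, is precisely what the paper uses in Section 6 --- but only for the \emph{split} ribbon, where the sequence splits, $\delta=0$, and the problem becomes the bare vanishing $K_{p,2}(R')=0$; this is the module $M^j$ of \Cref{green for split} and remains open in general (\Cref{split ribbon conjecture}), with a genuine counterexample at small genus (\Cref{does not satisfy Green}). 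Your proposal for the general ribbon amounts to proving surjectivity of $\delta$ by ``degenerating $C$'' or ``inducting on $g$'' with no indication of how to make either work; that step is the entire theorem, and nothing in the rest of your argument supplies it.

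The paper's actual proof of the general-ribbon case is by a different mechanism entirely: the Hirschowitz--Ramanan/Deopurkar divisor argument. After establishing that a ribbon of $p_a\ge 4g-2$ has unobstructed abstract and embedded deformations and lands in a suitable open substack $\mathcal U$ (\Cref{unobstructedness of embedded ribbons}, \Cref{unobstructedness of ribbons}), and that the worse-than-nodal locus in a versal family has codimension at least $2$ (\Cref{worse than nodal}), one shows that the Koszul divisor $D=\{K_{k-1,1}\ne 0\}$ coincides near $[\widetilde C]$ with the closure $D_{k+1}$ of the $(k+1)$-gonal locus (\Cref{equality of the divisors}). The gonality stratification (\Cref{dimension of W_d}) then places the general ribbon off $D_{k+1}$, hence off $D$, giving $\RCliff(\widetilde C)\ge k$ for odd $p_a=2k+1$; the even-genus and lower-blow-up-index cases follow by blow-downs via \cite[Lemma 1]{D18}. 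The two thresholds in $\max\{3g+7,6g-4\}$ come from the gonality-stratification dimension count and from \Cref{lin and res} respectively, not from any dimension count or duality attached to your map $\delta$. So your framework is the right one for the split ribbon but not the one that actually proves this theorem, and the surjectivity of $\delta$ you need is not established.
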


To prove \Cref{Green's conjecture intro}, we study two stratifications on the space $\mathbb{P}(H^0(2K_C-L)^*)$ of ribbons on $C$ with conormal bundle $L$.
The first is the stratification by the blow-up index of the ribbon, and the second is the stratification by gonality.
These two stratifications coincide with the secant stratification in the case when $C \cong \mathbb{P}^1$.
In higher genus, the blow-up index stratification coincides with the secant stratification, but the gonality stratification is different and more interesting.
Let $\textrm{Sec}_k(C)$ denote the $k$th secant variety of $C$ in the embedding $C \hookrightarrow \mathbb{P}H^0(2K_C-L)^*$.
\begin{theorem}[\Cref{gonality stratification} and \Cref{inclusion of gonality stratification in secant variety}]
  \label{gonality stratification introduction}
  Let  $C$ be a smooth curve of genus $g$ and gonality $m$. Then the subvariety $W_d$ of $\mathbb{P}(H^0(2K_C-L)^*)$ parameterizing ribbons $\widetilde{C}$ with conormal bundle $L$ containing a $g^1_d$ is the subvariety of $\Sec_{d+2g-2}(C)$ given by
\begin{align*}
  W_d = \bigcup_{e \leq d/2} \left\{\text{\parbox{.7\linewidth}{Secant planes spanned by divisors of length $d+2g-2$ that contain a ramification divisor of a degree $e$ map $C \to \mathbb{P}^1$}}\right\}.
\end{align*}
In particular, we have \[\textrm{Sec}_{d-2m}(C) \subset W_d \subset \textrm{Sec}_{d+2g-2}(C).\]
\end{theorem}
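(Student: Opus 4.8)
The plan is to translate both sides into the language of extension classes of ribbons and to match them using Serre duality; the two inclusions will then be formal, and the real work will be the exact description of $W_d$.

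\textbf{Step 1: the extension-class/secant dictionary.} A ribbon on $C$ with conormal bundle $L$ is determined by its extension class $e\in\Ext^1(\Omega_C,L)=H^1(T_C\otimes L)$, and Serre duality identifies $H^1(T_C\otimes L)$ with $H^0(2K_C-L)^*$. So $\mathbb{P}(H^0(2K_C-L)^*)$ is at once the space of ribbons $\widetilde{C}_e$ with conormal $L$ and the ambient space of the morphism $C\to\mathbb{P}(H^0(2K_C-L)^*)$ given by $|2K_C-L|$. For an effective divisor $Z$ on $C$, the linear span $\langle Z\rangle$ is the annihilator of $H^0(2K_C-L-Z)\subseteq H^0(2K_C-L)$; dualizing the restriction sequence of $2K_C-L$ along $Z$ and applying Serre duality gives
\[ H^0(2K_C-L-Z)^{\perp}\;=\;\ker\!\big(H^1(T_C\otimes L)\to H^1(T_C\otimes L(Z))\big), \]
which is also the image of $H^0\!\big((T_C\otimes L)(Z)|_Z\big)\to H^1(T_C\otimes L)$. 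Hence $[e]\in\langle Z\rangle$ exactly when $e$ maps to $0$ in $\Ext^1(\Omega_C,L(Z))$; this is the secant side of the identification of the secant stratification with the blow-up stratification discussed in the introduction, which I would quote or reprove here.

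\textbf{Step 2: the pencil criterion, where the ramification divisor enters.} The key step is to show that $\widetilde{C}_e$ carries a $g^1_d$ restricting to a degree-$e'$ map $f\colon C\to\mathbb{P}^1$ precisely when $[e]\in\langle R+D'\rangle$ with $R$ the ramification divisor of $f$ and $D'$ effective of degree $d-2e'$. Given a $g^1_d$ on $\widetilde{C}_e$, represented by a line bundle (or, for odd $d$, a rank-one torsion-free sheaf) $\mathcal{N}$ with pencil $W$: restricting $(\mathcal{N},W)$ to $C=(\widetilde{C}_e)_{\mathrm{red}}$ and stripping base points yields a morphism $f\colon C\to\mathbb{P}^1$ of some degree $e'$, and an Euler-characteristic count comparing $\chi(\mathcal{N})$ with $\chi(\mathcal{N}|_C)$ and $\chi(\mathcal{N}|_C\otimes L)$ forces $e'\le d/2$. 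Conversely, for fixed $f$ the obstruction to lifting it to a $g^1_d$ on $\widetilde{C}_e$ is the vanishing on $W$ of the connecting map of $0\to\mathcal{I}_C\mathcal{N}\to\mathcal{N}\to\mathcal{N}\otimes\mathcal{O}_C\to 0$; resolving this through the ribbon sequence $0\to L\to\mathcal{O}_{\widetilde{C}_e}\to\mathcal{O}_C\to 0$ and through the ramification sequence $0\to f^*\Omega_{\mathbb{P}^1}\to\Omega_C\to\Omega_{C/\mathbb{P}^1}\to 0$, whose torsion term $\Omega_{C/\mathbb{P}^1}$ is supported exactly on $R$, reduces the obstruction to the requirement that $e$ die in $\Ext^1(\Omega_C,L(R+D'))$ for some effective $D'$ of degree $d-2e'$. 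By Step 1 this says exactly $[e]\in\langle R+D'\rangle$, where $R+D'$ has length $(2g-2+2e')+(d-2e')=d+2g-2$ and contains the ramification divisor of a degree-$e'$ map. Taking the union over all $f$ with $e'\le d/2$ and all such $D'$ gives the asserted description of $W_d$.

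\textbf{Step 3: the two inclusions.} For the upper bound I observe that each secant plane appearing above is spanned by a divisor of length $d+2g-2$, so $W_d\subseteq\Sec_{d+2g-2}(C)$. For the lower bound I would use that $C$ has gonality $m$: fix a degree-$m$ map $f\colon C\to\mathbb{P}^1$ with ramification divisor $R$ (of degree $2g-2+2m$). For any effective $D'$ of degree $d-2m$ one has $m\le d/2$ and $R+D'$ of length $d+2g-2$, so $\langle R+D'\rangle\subseteq W_d$ by Step 2; since $D'\le R+D'$ gives $\langle D'\rangle\subseteq\langle R+D'\rangle$, letting $D'$ vary and using that $W_d$ is closed yields $\Sec_{d-2m}(C)\subseteq W_d$.

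The crux is Step 2, and specifically extracting the ramification divisor — rather than an arbitrary effective divisor of the same degree — from the obstruction calculation. This needs a careful description of rank-one sheaves on the ribbon and their global sections, in particular the non-locally-free ones required to realize odd $d$, together with the precise way $\Ext^1(\Omega_C,-)$ transforms under the ramification sequence of $f$; matching the length count on the two sides is what pins down the exponent $d+2g-2$.
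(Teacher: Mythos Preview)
Your approach is essentially correct and aligns with the paper's, using the same three ingredients: the extension/secant dictionary (your Step~1 is \Cref{blow-up index and secant variety} together with \Cref{equivalent definitions of blow-up index}), the ramification-divisor criterion for extending pencils (your Step~2), and the formal deduction of the inclusions (your Step~3 matches \Cref{inclusion of gonality stratification in secant variety} verbatim).

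The difference lies in how Step~2 is packaged. You propose a direct obstruction-theoretic calculation via the ramification sequence $0 \to f^*\Omega_{\mathbb{P}^1} \to \Omega_C \to \Omega_{C/\mathbb{P}^1} \to 0$. The paper instead invokes two structural results as black boxes: the classification of generalized line bundles \cite[Theorem~1.1]{EG95}, which says any such sheaf is the push-forward of an honest line bundle from a blow-up along some effective $\beta \subset C$; and the extension criterion \cite[Theorem~1.6]{BE95}, which says a map $f\colon C \to \mathbb{P}^1$ extends to a ribbon precisely when there is a lift $K_C(-R) \to \Omega_{\widetilde C}|_C$ over $K_C(-R) \hookrightarrow K_C$, i.e.\ when the pulled-back extension class splits. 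With these in hand the argument is short: a $g^1_d$ on $\widetilde C$ is pushed forward from a $g^1_{d-b}$ on the blow-up $\widetilde C'$ at $\beta$ (with $b = \deg\beta$), which gives an honest map $\widetilde C' \to \mathbb{P}^1$ restricting to $f$ of degree $e = (d-b)/2$ on $C$; then \cite[Theorem~1.6]{BE95} says $\widetilde C'$ splits after further blow-up in $R$, so $\widetilde C$ splits after blow-up in $\beta + R$, placing $[\widetilde C]$ in the span of $\beta + R$. The converse runs the same steps backward. This makes transparent that your auxiliary divisor $D'$ is exactly the blow-up locus $\beta$, and cleanly handles the non-locally-free case that you flag as needing care. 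Your route would effectively re-derive \cite[Theorem~1.6]{BE95} inline; it works, but citing it removes the vagueness in your Step~2 about where $D'$ comes from and how ``stripping base points'' interacts with torsion in $\widetilde M|_C$.
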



We turn next to the split ribbon. This is a particularly interesting case---all smooth double covers isotrivially degenerate to a split ribbon, and hence, a proof of equality of the linear series and resolution Clifford index for a split ribbon establishes Green's conjecture for a smooth double cover. We first show that the linear series Clifford index of a split ribbon is $2m-2$, where $m$ is the gonality of the underlying curve (\Cref{lower bounds to linear series gonality and Clifford index}). The rest of the section is devoted to proving that the following statements are equivalent to the fact that the resolution Clifford index of a split ribbon is $2m-2$.  We prove the following.

\begin{theorem}\label{green for split intro} (see \Cref{green for split}, \Cref{green for split implies green for double cover})
Let $\widetilde{C}$ be a split ribbon on a curve $C$ of genus $g$ and gonality $m$ with conormal bundle $L$.
\begin{enumerate}
    \item The linear series Clifford index of $\widetilde C$ is $\LCliff(\widetilde C) = 2m-2$.
    \item If $p_a(\widetilde{C}) \geq \textrm{max}\{2g+2m-1, 6g-4\}$, then the following are equivalent
    \smallskip
    \begin{enumerate}
    \item The resolution Clifford index $\RCliff(\widetilde C) = 2m-2$.
        \smallskip
      \item For all $i,j$ with $i,j \geq 0$ and $i + j = 2m-3$, the map
        \[\Phi_{i,j}: \bigwedge^{i+1}H^0(K_C) \otimes K_{j,1}(C, K_C-L) \xrightarrow{} \bigwedge^{i}H^0(K_C) \otimes K_{j,1}(C, K_C, K_C-L)\] is surjective.

        \smallskip
      \item Set $M^j = \bigoplus_q K_{j,1}(C, qK_C, K_C-L)$.
        For all $i,j$ with $i,j \geq 0$ and $i + j = 2m-3$, we have \[K_{i,1}(M^{j}, H^0(K_C)) = 0.\]

       \smallskip

       
\end{enumerate}

       \smallskip

    \item 
    If any of the equivalent conditions of part $(2)$ holds, then any smooth curve $C'$ which is a double cover of $C$ branched along $-2L$ satisfies Green's conjecture.
\end{enumerate}  
\end{theorem}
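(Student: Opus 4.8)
The plan is to handle (1) and (3) with relatively soft arguments and to concentrate the work on the chain of equivalences in (2), which comes from a $\mathbb{G}_m$-equivariant dissection of the canonical Koszul complex of $\widetilde C$.

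\emph{Part (1) and the reduction of (a).} The split ribbon is $\widetilde C=\operatorname{Spec}_C(\mathcal O_C\oplus L)$ with square-zero multiplication, and the projection $\pi\colon\widetilde C\to C$ identifies $\omega_{\widetilde C}$ with $\pi^{*}(K_C-L)$, so $H^0(\widetilde C,\omega_{\widetilde C}^{\otimes q})=H^0(C,q(K_C-L))\oplus H^0(C,K_C+(q-1)(K_C-L))$. Pulling back along $\pi$ a pencil $g^1_m$ computing $\gon(C)$ produces a $g^1_{2m}$ on $\widetilde C$, hence $\LCliff(\widetilde C)\le 2m-2$ together with a nonzero $K_{2m-2,2}(\widetilde C,\omega_{\widetilde C})$; the reverse inequality $\LCliff(\widetilde C)\ge 2m-2$ is \Cref{lower bounds to linear series gonality and Clifford index}. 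This gives (1) and also $\RCliff(\widetilde C)\le 2m-2$, so that proving (a) amounts to showing $K_{p,2}(\widetilde C,\omega_{\widetilde C})=0$ for $p\le 2m-3$.

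\emph{Part (2).} Write $V_0=H^0(K_C-L)$, $V_1=H^0(K_C)$, so $H^0(\widetilde C,\omega_{\widetilde C})=V_0\oplus V_1$. The $\mathbb G_m$-action scaling the conormal direction presents the canonical ring $R_{\widetilde C}$ as a square-zero extension $\mathcal R\ltimes\mathcal M$, where $\mathcal R=\bigoplus_n H^0(n(K_C-L))$ is the section ring of $K_C-L$ on $C$ and $\mathcal M=\bigoplus_n H^0(K_C+n(K_C-L))$; both are $\Sym V_0$-modules on which $V_1$ acts trivially, while $V_1$ supplies the multiplication $V_1\otimes\mathcal R\to\mathcal M$. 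The short exact sequence $0\to\mathcal M[-1]\to R_{\widetilde C}\to\mathcal R\to 0$ of $\Sym(V_0\oplus V_1)$-modules gives a long exact sequence in Koszul cohomology; since $V_1$ acts trivially on $\mathcal R$ and $\mathcal M$, base change splits their Koszul cohomology over $\Sym(V_0\oplus V_1)$ into $\bigoplus_{a+b=p}\bigwedge^b V_1\otimes K_{a,q}(C,K_C-L)$ and $\bigoplus_{a+b=p}\bigwedge^b V_1\otimes K_{a,q}(C,K_C,K_C-L)$ respectively, and the connecting map is, summand by summand, exactly $\pm\Phi_{b-1,a}$. A chase through this long exact sequence (using that $\mathcal R,\mathcal M$ are generated in degrees $\le 1$, and that $K_{a,2}(C,K_C-L)$ vanishes in the needed range because $p_a\ge 2g+2m-1$) expresses $K_{p,2}(\widetilde C,\omega_{\widetilde C})$, for $p\le 2m-3$, as the cokernel of $\bigoplus_{i+j=p}\Phi_{i,j}$; the genus bound is used once more to show that vanishing of this cokernel at the top value $p=2m-3$ already forces it for all smaller $p$, giving (a)$\Leftrightarrow$(b). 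For (b)$\Leftrightarrow$(c): the groups $K_{j,1}(C,qK_C,K_C-L)$ assemble over $q$ into the graded $\Sym H^0(K_C)$-module $M^j$ with $M^j_0=K_{j,1}(C,K_C-L)$, $M^j_1=K_{j,1}(C,K_C,K_C-L)$, and $\Phi_{i,j}$ is precisely the differential $\bigwedge^{i+1}H^0(K_C)\otimes M^j_0\to\bigwedge^i H^0(K_C)\otimes M^j_1$ in the Koszul complex of $M^j$; once the term one step further along that complex vanishes (true in the stated range), surjectivity of $\Phi_{i,j}$ becomes equivalent to $K_{i,1}(M^j,H^0(K_C))=0$.

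\emph{Part (3).} A smooth double cover $\pi'\colon C'\to C$ with branch divisor in $|-2L|$ has $\pi'_*\mathcal O_{C'}=\mathcal O_C\oplus L$, with the defining section $s$ of $-2L$ giving the multiplication $L\otimes L\to\mathcal O_C$; letting $s\to 0$ realizes the split ribbon $\widetilde C$ as a flat limit of $C'$ inside the Hilbert scheme of canonically embedded curves of genus $p_a=p_a(\widetilde C)=g(C')$, as in \cite{Gon06,GGP08} (note $C'$ is non-hyperelliptic in this range by Castelnuovo--Severi, so its canonical embedding exists). Upper semicontinuity of graded Betti numbers makes $K_{p,2}(C',K_{C'})$ a subquotient of $K_{p,2}(\widetilde C,\omega_{\widetilde C})$; when the conditions of (2) hold the latter vanishes for $p\le 2m-3$, so $\RCliff(C')\ge 2m-2$. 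On the other hand $\gon(C')\le 2\gon(C)=2m$ (pull the gonality pencil back along $\pi'$), hence $\operatorname{Cliff}(C')\le\gon(C')-2\le 2m-2$, while the Green--Lazarsfeld non-vanishing theorem gives $\RCliff(C')\le\operatorname{Cliff}(C')$. Chaining, $2m-2\le\RCliff(C')\le\operatorname{Cliff}(C')\le 2m-2$, so $\RCliff(C')=\operatorname{Cliff}(C')$ and $C'$ satisfies Green's conjecture.

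\emph{Main obstacle.} The crux is the ``collapse'' step in Part (2): one must show that, under $p_a\ge\max\{2g+2m-1,6g-4\}$, every contribution to $K_{p,2}(\widetilde C,\omega_{\widetilde C})$ off the antidiagonal $i+j=2m-3$ is forced to vanish — equivalently, that the auxiliary Koszul groups $K_{a,q}(C,K_C-L)$ and $K_{a,q}(C,K_C,K_C-L)$ appearing one step away in the long exact sequence vanish in the relevant range, and that surjectivity of the $\Phi_{i,j}$ on the top antidiagonal already controls the lower ones. This requires sharp Koszul-cohomology input for the base curve $C$: high-degree $N_p$-type vanishing for the very positive bundle $K_C-L$, the Ein--Lazarsfeld gonality theorem to pin down the extent of its linear strand, and Serre duality on $C$ (which exchanges $K_{a,1}(C,K_C-L)$ with $K_{p_a-g-2-a,1}(C,K_C,K_C-L)$) to transfer control from the top of the range to the bottom. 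Tracking how these interact — and extracting the precise bound $6g-4$ — is the delicate part; a secondary, bookkeeping-type obstacle is verifying projective normality and the arithmetic-Gorenstein property of the canonical embeddings involved and checking that the square-zero structure identifies the connecting homomorphism with $\Phi_{i,j}$ on the nose.
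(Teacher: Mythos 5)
Your proposal is correct and takes essentially the same approach as the paper: decompose the canonical ring of the split ribbon into the "even" part $\mathcal R = \bigoplus H^0(q(K_C-L))$ and "odd" part coming from $\bigoplus H^0(q(K_C-L)+L)$, produce a short exact sequence of Koszul complexes, observe the quotient's $K_{\bullet,2}$ terms vanish in the relevant range by Green's $N_p$-vanishing theorem (since $\deg(K_C-L)\ge 2g+2m-2$), and identify the connecting homomorphism, summand by summand, with the maps $\Phi_{i,j}$; the equivalence with (c) is then exactly \Cref{surjectivity as koszul cohomology}, and parts (1) and (3) go exactly as you describe via \Cref{lower bounds to linear series gonality and Clifford index} and the isotrivial degeneration. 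The one place you overcomplicate is the "collapse for all $p\le 2m-3$" step: since $\RCliff(\widetilde C)\le 2m-2$ already (by \Cref{lin and res}) and vanishing of $K_{p,2}$ propagates downward for an arithmetically Gorenstein canonical embedding, it suffices to show $K_{2m-3,2}(\widetilde C,K_{\widetilde C})=0$, which requires only Green's explicit vanishing theorem and not the Ein--Lazarsfeld gonality theorem.
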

We believe that it is interesting to study whether the equivalent statements are true for any smooth curve $C$ (see \Cref{split ribbon conjecture}). We show that the question has a positive answer when the curve is either elliptic or hyperelliptic. We end with \Cref{does not satisfy Green}, where we give an example of a split ribbon of low arithmetic genus, i.e, arithmetic genus $9$ over a general curve of genus $3$, which does \textit{not} satisfy Green's conjecture.
\subsection{Organization and conventions}
In \Cref{sec:can}, we recall basic results about the canonical embedding of a ribbon.
In \Cref{sec:cliff}, we recall and establish the basic properties of the Clifford index for ribbons.
In \Cref{sec:blowup} and \Cref{sec:gonality}, we study the blow-up index and gonality stratification of the space of ribbons.
In \Cref{sec:green_general}, we use these stratifications to prove Green's conjecture for a general ribbon.
In \Cref{sec:split}, we treat the case of a split ribbon.

All schemes are of finite type over an algebraically closed field of characteristic zero.
For us, the projectivisation of $V$ is $\mathbb{P} V = \Proj \Sym^{*} (V^{*})$, which is the space of one-dimensional subspaces of $V$.
Unless stated otherwise, a \emph{curve} is a projective connected scheme of pure dimension 1.

\subsection*{Acknowledgements}
The second author thanks Purnaprajna Bangere and Debaditya Raychaudhury for motivating discussions on syzygies of $K3$ carpets. The second author also thanks the Australian National University for its hospitality.

\section{Canonical embedding of ribbons on curves of higher genus}\label{sec:can}

In this section, we study the canonical map of a ribbon on a higher genus curve.
Let $C$ be a smooth curve, $L$ a line bundle on $C$, and $\widetilde{C}$ a ribbon on $C$ with conormal bundle $L$.

We begin by analyzing sections of positive line bundles on $\widetilde C$.
\begin{proposition}\label{very ampleness on ribbons}
In the setup above, let $\widetilde M$ be a line bundle on $\widetilde C$ and set $M := \widetilde{M}|_C$.
\begin{enumerate}
    \item Assume that $M$ is very ample, $M \otimes L$ is base point free and $H^0(\widetilde{M}) \to H^0(M)$ surjects. Then $\widetilde{M}$ is very ample. 
    \item Assume that  $M$ is projectively normal and for all $k \geq 1$, the maps \[H^0(k\widetilde{M}) \to H^0(kM) \text{ and } H^0(M \otimes L) \otimes H^0(kM) \to H^0((k+1)M \otimes L)\] are surjective. Then the multiplication map $H^0(k\widetilde{M}) \otimes H^0(\widetilde{M}) \to H^0((k+1)\widetilde{M}) $ is surjective for all $k \geq 1$.
\end{enumerate}
\end{proposition}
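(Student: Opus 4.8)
The plan is to reduce both parts to the structure sequence of the ribbon. Write $L=\mathcal I_{C/\widetilde C}$; since $L^2=0$, tensoring $0\to L\to\mathcal O_{\widetilde C}\to\mathcal O_C\to 0$ with the line bundle $k\widetilde M$ gives, for every $k\ge 1$, a short exact sequence of sheaves on $\widetilde C$
\[
0\longrightarrow kM\otimes L\longrightarrow k\widetilde M\longrightarrow kM\longrightarrow 0,
\]
where $kM$ and $kM\otimes L$ are pushed forward from $C$ (one uses $L\otimes_{\mathcal O_{\widetilde C}}k\widetilde M=L\otimes_{\mathcal O_C}kM$, valid because $L$ is an $\mathcal O_C$-module). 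First I would record this sequence together with its cohomology sequence: the hypothesis that $H^0(k\widetilde M)\to H^0(kM)$ surjects says precisely that $H^0(k\widetilde M)$ is an extension of $H^0(kM)$ by $H^0(kM\otimes L)$, and similarly for $k=1$.

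For part $(1)$, I would apply the criterion that a line bundle on a projective scheme is very ample if and only if $H^0(\widetilde M)\to H^0(\widetilde M|_Z)$ is surjective for every length-$\le 2$ subscheme $Z$; the length-$1$ case is global generation of $\widetilde M$, which is immediate from the structure sequence since $M$ is base point free and $H^0(\widetilde M)\twoheadrightarrow H^0(M)$. Because $\widetilde C$ and $C$ have the same closed points, a length-$2$ subscheme $Z$ is of one of three kinds: two distinct reduced points, a length-$2$ subscheme contained in $C$, or a length-$2$ subscheme supported at one point $p$ and not contained in $C$ --- and of the latter there is a $\mathbb P^1$'s worth, since $\widetilde C$ has embedding dimension $2$ along $C$. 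In the first two cases $\widetilde M|_Z=M|_Z$ and surjectivity follows from very ampleness of $M$ and $H^0(\widetilde M)\twoheadrightarrow H^0(M)$. In the third case one shows, from a local periodic free resolution of $\mathcal O_C$ over $\mathcal O_{\widetilde C}$, that $\mathcal{T}or_1^{\mathcal O_{\widetilde C}}(M,\mathcal O_Z)=0$, so the structure sequence restricts to $0\to (M\otimes L)\otimes k(p)\to\widetilde M|_Z\to M\otimes k(p)\to 0$; chasing this against $0\to H^0(M\otimes L)\to H^0(\widetilde M)\to H^0(M)$ yields surjectivity, using base point freeness of $M\otimes L$ on the left and very ampleness of $M$ (with the surjectivity hypothesis) on the right. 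I expect the main obstacle to be exactly this last step: organizing the length-$2$ subschemes at a point and seeing that the unique one inside $C$ is governed by $M$ while every transverse one is governed by $M\otimes L$ --- the $\mathcal{T}or$ vanishing being the one genuinely non-formal ingredient.

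For part $(2)$, fix $k\ge 1$ and consider $\mu_k\colon H^0(k\widetilde M)\otimes H^0(\widetilde M)\to H^0((k+1)\widetilde M)$. The key point is that a product of two sections that each vanish along $C$ is zero, since $\mathcal I_{C/\widetilde C}^2=0$; in particular $H^0(kM\otimes L)\cdot H^0(M\otimes L)=0$. I would then filter by order of vanishing along $C$ and argue in two steps. First, $H^0(k\widetilde M)\otimes H^0(M\otimes L)$ maps into $H^0((k+1)M\otimes L)$, and by the vanishing this factors through $H^0(kM)\otimes H^0(M\otimes L)$, where --- after the identification $\mathcal I_C\otimes_{\mathcal O_{\widetilde C}}\widetilde M\cong L\otimes_{\mathcal O_C}M$ --- it is the ordinary multiplication map $H^0(kM)\otimes H^0(M\otimes L)\to H^0((k+1)M\otimes L)$ on $C$, which is surjective by hypothesis; hence $H^0((k+1)M\otimes L)\subseteq\operatorname{im}\mu_k$. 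Second, modulo $H^0((k+1)M\otimes L)$ the map $\mu_k$ becomes the multiplication $H^0(kM)\otimes H^0(M)\to H^0((k+1)M)$ on $C$ (using $H^0(k\widetilde M)\twoheadrightarrow H^0(kM)$ and $H^0(\widetilde M)\twoheadrightarrow H^0(M)$), which is surjective because $M$ is projectively normal. A short diagram chase combines the two steps to give surjectivity of $\mu_k$ for all $k\ge 1$. The only delicate point here is to check that the ``mixed'' multiplication maps produced by the filtration really do coincide with the ordinary multiplication maps on $C$ appearing in the hypotheses.
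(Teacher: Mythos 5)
Your proposal is correct and follows essentially the same route as the paper: both parts reduce to a diagram chase on the structure sequence $0 \to M \otimes L \to \widetilde M \to M \to 0$, with surjectivity of the flanking vertical maps forcing surjectivity of the middle one. One small simplification worth noting: the $\mathcal{T}or_1$ vanishing you flag as the delicate ingredient in part $(1)$ is avoided in the paper by observing directly that, when $\widetilde\zeta$ meets $C$ in a single point $\zeta$, one has an exact sequence $0 \to L\otimes\mathcal{O}_\zeta \to \mathcal{O}_{\widetilde\zeta} \to \mathcal{O}_\zeta \to 0$, which remains exact after tensoring with the line bundle $\widetilde M$.
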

\begin{proof}
Assuming (1), we have an exact sequence,
\begin{equation*}
    0 \to H^0(M \otimes L) \to H^0(\widetilde{M}) \to H^0(M) \to 0
\end{equation*}
Let $\widetilde{\zeta}$ be a length two subscheme of $\widetilde{C}$.
We must show that the restriction $H^0(\widetilde M) \to H^0(\widetilde M|_{\widetilde \zeta})$ surjects.
Let $\zeta$ be the intersection of $\widetilde \zeta$ with $C$, so that $\mathcal O_\zeta = \mathcal O_{\widetilde \zeta} |_C$.
Then $\zeta$ is a subscheme of $C$ of length one or two.
If it is of length two, then $\widetilde \zeta$ is a subscheme of $C$.
Since $M$ is very ample, and $H^0(\widetilde{M}) \to H^0(M)$ surjects, we conclude that $H^{0}(\widetilde M) \to H^{0}(\widetilde M|_{\widetilde \zeta})$ surjects.
we get a section of $\widetilde{M}$ separating $\mathscr{O}_{\widetilde{\zeta}}$.
On the other hand, if $\mathscr{O}_{\zeta}$ is of length one, then we have the exact sequence
\begin{equation*}
    0 \to \mathscr{O}_{\zeta} \to \mathscr{O}_{\widetilde{\zeta}} \to \mathscr{O}_{\zeta} \to 0
\end{equation*}
Tensoring by $\widetilde{M}$ and taking global sections gives
\begin{equation*}
    0 \to H^0(M \otimes L \otimes \mathscr{O}_{\zeta}) \to H^0(\widetilde{M} \otimes \mathscr{O}_{\widetilde{\zeta}}) \to H^0(M \otimes \mathscr{O}_{\zeta}) \to 0,
\end{equation*}
and a commutative diagram of exact rows as follows
\[
\begin{tikzcd}
  0 \arrow[r] & H^0(M \otimes L ) \arrow[r] \arrow[d] & H^0(\widetilde{M}) \arrow[r] \arrow[d] & H^0(M) \arrow[r] \arrow[d] & 0 \\
 0 \arrow[r] & H^0(M \otimes L \otimes \mathscr{O}_{\zeta}) \arrow[r] & H^0(\widetilde{M} \otimes \mathscr{O}_{\widetilde{\zeta}}) \arrow[r]  & H^0(M \otimes \mathscr{O}_{\zeta}) \arrow[r] & 0.
\end{tikzcd}
\]
Since both $M$ and $M \otimes L$ are base point free, the flanking vertical maps surject and hence the middle vertical map surjects. 

Assuming (2), we need to show that $H^0(k\widetilde{M}) \otimes H^0(\widetilde{M}) \to H^0((k+1)\widetilde{M})$ surjects. We have the following commutative diagram 
\[
\begin{tikzcd}
  0 \arrow[r] & H^0(M \otimes L ) \otimes H^0(k\widetilde{M}) \arrow[r] \arrow[d] & H^0(\widetilde{M}) \otimes H^0(k\widetilde{M}) \arrow[r] \arrow[d] & H^0(M) \otimes H^0(k\widetilde{M}) \arrow[r] \arrow[d] & 0 \\
 0 \arrow[r] & H^0((k+1)M \otimes L)) \arrow[r] & H^0((k+1)\widetilde{M}) \arrow[r]  & H^0((k+1)M) \arrow[r] & 0
\end{tikzcd}
\]
The conditions in (2) imply that the two flanking vertical maps surject, and hence the middle vertical map also surjects.
\end{proof}

We now apply the previous analysis to the canonical bundle.
\begin{proposition}\label{canonical morphism of ribbons}
Let $\widetilde{C}$ be a ribbon on a smooth irreducible curve $C$ of genus $g \geq 1$ with conormal bundle $L$. 
\begin{enumerate}
    \item If $p_a \geq 2g+2$, then $ K_{\widetilde{C}} $ is very ample.
    \item If $p_a \geq 2g+2$ and either $g = 1$ or $h^0(K_C+L) \leq g-2$, 
     then the map 
    $$H^0(kK_{\widetilde{C}}) \otimes H^0(K_{\widetilde{C}}) \to H^0((k+1)K_{\widetilde{C}})$$ is surjective for all $k \geq 1$. In particular if $g \geq 2$, the above surjectivity holds if $p_a \geq 4g-2$.
\end{enumerate}
\end{proposition}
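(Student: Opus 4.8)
The plan is to deduce both parts from \Cref{very ampleness on ribbons} applied with $\widetilde{M} = K_{\widetilde{C}}$, so that $M := K_{\widetilde{C}}|_{C}$. First I would record the data of $M$. A standard computation (relative duality for the closed immersion $C \hookrightarrow \widetilde{C}$, using $\mathcal{I}_{C/\widetilde{C}} \cong L$) gives $K_{\widetilde{C}}|_{C} = K_{C} - L$; hence $M = K_{C} - L$ with $\deg M = (2g-2) - \deg L = p_{a} - 1$, and crucially $M \otimes L = K_{C}$. Since $p_{a} \geq 2g+2$ we have $\deg M \geq 2g+1$, so $M$ is very ample, projectively normal, and non-special; moreover $M\otimes L = K_{C}$ is base point free because $g \geq 1$. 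Finally, from $\chi(\mathcal{O}_{\widetilde{C}}) = 1-p_{a}$ one gets $h^{0}(\widetilde{M}) = p_{a}$, while $h^{0}(M\otimes L) = g$ and $h^{0}(M) = p_{a}-g$, so the left-exact sequence $0 \to H^{0}(M\otimes L) \to H^{0}(\widetilde{M}) \to H^{0}(M)$ forces $H^{0}(\widetilde{M}) \to H^{0}(M)$ to be surjective. Feeding these three facts into \Cref{very ampleness on ribbons}(1) proves (1).

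For (2) I would verify the hypotheses of \Cref{very ampleness on ribbons}(2). Projective normality of $M$ was just noted. The surjectivity of $H^{0}(k\widetilde{M}) \to H^{0}(kM)$ holds for $k = 1$ by the above, and for $k \geq 2$ because the obstruction lies in $H^{1}\big((kM)\otimes L\big) = H^{1}\big(kK_{C}-(k-1)L\big)$, which vanishes by the degree estimate $\deg\big(kK_{C}-(k-1)L\big) = k(2g-2)+(k-1)(p_{a}-2g+1) \geq 2g-1$ (valid once $k\geq 2$ and $p_{a}\geq 2g+2$). Next, the maps $H^{0}(K_{C})\otimes H^{0}\big(k(K_{C}-L)\big) \to H^{0}\big((k+1)K_{C}-kL\big)$: if $g=1$ then $K_{C}=\mathcal{O}_{C}$ and each such map is an isomorphism, so all $k$ are immediate; if $g\geq 2$, I would apply the base-point-free pencil trick to a general pencil $V\subset H^{0}(K_{C})$, whose cokernel onto $H^{0}\big((k+1)K_{C}-kL\big)$ is identified with $H^{1}\big((k-1)K_{C}-kL\big)$, and a degree count shows this vanishes for $k\geq 2$ (again using $p_{a}\geq 2g+2$).

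The heart of the matter is the case $k=1$ with $g\geq 2$: the surjectivity of $\mu\colon H^{0}(K_{C})\otimes H^{0}(K_{C}-L) \to H^{0}(2K_{C}-L)$. Setting $M_{K_{C}} := \ker\big(H^{0}(K_{C})\otimes\mathcal{O}_{C}\to K_{C}\big)$, twisting this sequence by $K_{C}-L$ and using $H^{1}(K_{C}-L)=0$ identifies $\operatorname{coker}\mu$ with $H^{1}\big(M_{K_{C}}\otimes(K_{C}-L)\big)$, which by Serre duality is dual to $H^{0}\big(M_{K_{C}}^{\vee}\otimes L\big)$. I expect essentially all of the work of the proposition to be in showing this group vanishes. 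One approach: dualize to $0 \to K_{C}^{-1}\otimes L \to H^{0}(K_{C})^{\vee}\otimes L \to M_{K_{C}}^{\vee}\otimes L \to 0$, observe $H^{0}(L)=H^{0}(L-K_{C})=0$ since $\deg L<0$, and run a dimension/general-pencil argument on the resulting long exact sequence; the numerical fact that makes it work is precisely $h^{1}(-L)=h^{0}(K_{C}+L)\leq g-2=h^{0}(K_{C})-2$, which is the standing hypothesis. (Alternatively, one can invoke that $M_{K_{C}}$ is semistable of slope $-2$, so that $M_{K_{C}}^{\vee}\otimes L$ is semistable of slope $2+\deg L = 2g+1-p_{a}<0$ and has no nonzero sections, which needs only $p_{a}\geq 2g+2$.) Everything else is routine Riemann--Roch bookkeeping.

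For the ``in particular'', note that when $g\geq 2$ and $p_{a}\geq 4g-2$ we have $\deg(K_{C}+L)=(2g-2)+(2g-1-p_{a})=4g-3-p_{a}\leq -1$, so $h^{0}(K_{C}+L)=0\leq g-2$ and also $p_{a}\geq 4g-2\geq 2g+2$; hence the hypotheses of (2) are satisfied.
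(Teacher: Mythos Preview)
Your proof is correct and follows the same overall strategy as the paper: both parts are deduced from \Cref{very ampleness on ribbons} with $\widetilde{M} = K_{\widetilde{C}}$, after identifying $M = K_C - L$ and $M \otimes L = K_C$ and checking the relevant hypotheses. The only substantive difference lies in how the surjectivity of $H^0(K_C) \otimes H^0\big(k(K_C-L)\big) \to H^0\big((k+1)K_C - kL\big)$ is established. The paper simply invokes \cite[Theorem~4.e.1]{G84}, while you argue directly: the base-point-free pencil trick handles $k \geq 2$, and for $k=1$ you reduce to $H^0(M_{K_C}^{\vee} \otimes L) = 0$. Your ``dimension/general-pencil'' sketch for this last vanishing is too vague as written (the long exact sequence you set up is essentially dual to the multiplication map itself, so gives no new leverage), but your semistability alternative---using that $M_{K_C}$ is semistable of slope $-2$ (cf.\ \cite{PR88}), so $M_{K_C}^{\vee}\otimes L$ has negative slope---is clean and correct. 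In fact that route shows the hypothesis $h^0(K_C+L)\leq g-2$ is superfluous once $g\geq 2$ and $p_a\geq 2g+2$, a mild strengthening over the paper's statement; the trade-off is that it imports a nontrivial stability result, whereas the paper's citation of Green keeps the input more elementary.
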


\begin{proof}
The canonical bundle $K_{\widetilde{C}}$ of a ribbon sits in an exact sequence
\begin{equation*}
    0 \to K_C \to K_{\widetilde{C}} \to K_{\widetilde{C}}|_C \to 0,
  \end{equation*}
  and we have $K_{\widetilde{C}}|_C = K_C \otimes L^{-1}$.
  For $k \geq 2$, the space $H^1(k(K_C \otimes L^{-1}) \otimes L)$ vanishes while for $k = 1$, the map $H^1(K_C) \to H^1(K_{\widetilde{C}})$ is injective. Hence $H^0(kK_{\widetilde{C}}) \to H^0(k(K_C \otimes L^{-1}))$ is surjective for all $k \geq 1$.
  Observe that $K_{C}$ is base point free and $K_C \otimes L^{-1}$ is very ample if $-\deg L \geq 3$.
  So (1) follows from \Cref{very ampleness on ribbons}.
  Further, under the conditions of (2) the map \[H^0(K_C) \otimes H^0(k(K_C\otimes L^{-1})) \to H^0((k+1)(K_C \otimes L^{-1}) \otimes L)\]
  is surjective by \cite[Theorem $4.e.1$]{G84}
  Hence part (2) follows from the part (2) of \Cref{very ampleness on ribbons}.
\end{proof}





\section{Linear Series Clifford index and resolution Clifford index of a ribbon}\label{sec:cliff}
Let $C$ be a smooth  curve of genus $g$ and let $\widetilde C$ be a ribbon on $C$ of arithmetic genus $p_a$ and conormal bundle $L$.
The exact sequence
\[ 0 \to L \to \mathcal{O}_{\widetilde C} \to O_C \to 0\]
implies
\[ \chi\left(\mathcal{O}_{\widetilde{C}}\right) = \chi(\mathcal{O}_C) + \chi(L),\]
and hence
\[ p_a = (2g-1) -\deg L.\]

We recall some defintions as introduced in \cite[Section~1]{EG95}.
A \emph{generalized line bundle} on $\widetilde{C}$ is a torsion free coherent sheaf which is generically free of rank $1$.
We define the \emph{degree} of a generalized line bundle $\widetilde{M}$ by
\[\deg(\widetilde{M}) = \chi(\widetilde{M})-\chi(\mathcal{O}_{\widetilde{C}}).\]
The restriction of $\widetilde M$ to $C$ may have torsion; let $\tau \subset \widetilde M|_{C}$ be the torsion subsheaf.
We set ${M} = (\widetilde{M}|_C/\tau)$.

Sections of $\widetilde M$ are of two kinds, those that yield a non-zero section of $M$ and those that do not.
The sections of the first kind define an injective map $\mathcal O_{\widetilde C} \to \widetilde M$.
The scheme-theoretic vanishing locus of such a section is a Cartier divisor on $\widetilde C$.
We call such sections \emph{Cartier} sections.
The sections of the second kind do not give an injective map $\mathcal O_{\widetilde C} \to \widetilde M$.
The entire reduced curve $C$ is contained in their scheme-theoretic zero locus.

Let \(\widetilde M\) be a generalised line bundle on \(\widetilde C\).
By \cite[Theorem~1.1]{EG95}, there exists a unique divisor \(\beta \subset C\) and a line bundle \(\widetilde M'\) on the blow up \(\widetilde C'\) of \(\widetilde C\) along \(\beta\) such that \(\widetilde M\) is the push-forward of \(\widetilde M'\).
Then \(M = \widetilde {M'}|_{C}\).

A \emph{generalized linear series} of rank $r$ and degree $d$, or simply a $g^r_d$, on $\widetilde{C}$ is pair $\Phi = (V, \widetilde{M})$ where $\widetilde{M}$ is a generalized line bundle and $V \subset H^0(\widetilde{M})$ is of dimension $r+1$, such that the restriction map $V \to H^0(M)$ is injective.
The \emph{Clifford index} of $\Phi$ is $d-2r$.
The \emph{linear series Clifford index} of $\widetilde{C}$, denoted by $\LCliff(\widetilde C)$, is the minimum of the Clifford indices of all generalized linear series $g^r_d$ such that $r \geq 1$ and $d \leq p_a-1$.
The \emph{gonality} of $\widetilde{C}$, denoted by $\gon(\widetilde{C})$, is the smallest $d$ such that there exists a $g^1_d$ on $\widetilde{C}$.

For a line bundle $\widetilde{H}$ on $\widetilde{C}$, we let $K_{p,q}(\widetilde{C}, \widetilde{H})$ be the Koszul cohomology group as defined in \cite{G84}.
For $\widetilde H = K_{\widetilde C}$, we know that $K_{p,q}(\widetilde C, K_{\widetilde C})$ is possibly non-zero only for $0 \leq p \leq p_a-2$ and $0 \leq q 3$.
Within this range, the group vanishes for $(p > 0, q = 0)$ and $(p < p_a-2, q = 3)$.
So the most interesting cases are $q = 1$ and $q = 2$.
Owing to the duality,
\[ K_{p,q}(\widetilde C, K_{\widetilde C}) = K_{p_a-p-2,3-q}(\widetilde C, K_{\widetilde C})^{\vee},\]
understanding $K_{p,1}$'s is equivalent to understanding $K_{p,2}$'s.
We also know that if $K_{p,2}(\widetilde C, K_{\widetilde C}) = 0$ then for all $i \geq p$, we have $K_{i,2}(\widetilde C, K_{\widetilde C}) = 0$.
So it is important to understand the smallest $p$ such that $K_{p,2}(\widetilde C, K_{\widetilde C}) \neq 0$.
This $p$ is called the \emph{resolution Clifford index} of $\widetilde C$.
We denote it by $\RCliff(\widetilde C)$.
By duality, it is the smallest $p$ such that $K_{p_a-2-p,1}(\widetilde{C}, K_{\widetilde{C}}) \neq 0$.

\subsection{Semicontinuity of gonality and Clifford index}
The resolution Clifford index is lower semi-continuous by the semi-continuity of cohomology.
We now establish the lower semicontinuity of gonality and linear series Clifford index.

Fix a smooth curve $C$ of genus $g$ and a ribbon $\widetilde C$ on $C$ of arithmetic genus $p_a$.
Let $(T,0)$ be a smooth pointed curve and $\mathcal C \to T$ a flat proper morphism of relative dimension 1.
Suppose for all $t \in T$ with $t \neq 0$, the fiber $\mathcal C_t$ is a smooth  curve of genus $p_a$ and the fiber $\mathcal C_0$ is $\widetilde C$.

The following proposition constructs a limiting generalized $g^r_d$.
\begin{proposition}\label{limit grd}
  In the setup above, assume that for all $t \neq 0$, the curve $\mathcal C_t$ has a $g^{r}_{d}$.
  Assume that $p_a > d + 2g - 1$ and $r \geq 1$.
  Then $\widetilde C$ has a generalised $g^r_{d'}$ with $d' \leq d$.
\end{proposition}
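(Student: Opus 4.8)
The plan is a degeneration argument: spread the $g^r_d$ out over a punctured neighbourhood of $0$, take the flat limit of the underlying line bundle and of the space of sections on $\widetilde{C}$, and then check --- this being the only place the hypothesis $p_a > d + 2g - 1$ is used --- that the limiting data is an honest generalized linear series. First I would reduce to a local statement. After shrinking $T$ around $0$ and a finite base change $T' \to T$ totally ramified over $0$ --- which leaves the central fibre $\widetilde{C}$ unchanged, since $\mathcal{C} \times_T T'$ has the same fibre over the point above $0$ --- one may assume that over $T^{\circ} := T \smallsetminus \{0\}$ there is a line bundle $\mathcal{L}$ on $\mathcal{C}^{\circ} := \mathcal{C} \times_T T^{\circ}$ of relative degree $d$, together with a rank $r+1$ subbundle $\mathcal{V} \subset \pi_{*}^{\circ} \mathcal{L}$ cutting out the given $g^r_d$ on each $\mathcal{C}_t$, $t \neq 0$. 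Since $r \geq 1$, each of these linear series has a nonzero section, so $\mathcal{L}$ may be taken of the form $\mathcal{O}_{\mathcal{C}^{\circ}}(\mathcal{D}^{\circ})$ for a relative effective divisor $\mathcal{D}^{\circ}$ of relative degree $d$.

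Next I would take the limits. As $\widetilde{C}$ is Gorenstein of pure dimension one --- its dualizing sheaf $K_{\widetilde{C}}$ is a line bundle, as used in \Cref{canonical morphism of ribbons} --- the relative compactified Jacobian of $\mathcal{C}/T$ is proper, so $\mathcal{L}$ extends to a coherent sheaf $\widetilde{\mathcal{L}}$ on $\mathcal{C}$ with $\widetilde{M} := \widetilde{\mathcal{L}}|_{\widetilde{C}}$ a generalized line bundle; performing a Hecke modification along $\widetilde{C}$ if necessary (which alters $\widetilde{M}$ but not $\mathcal{L}$ over $T^{\circ}$), we may assume that $\widetilde{M}$ carries a Cartier section $\sigma_0$ --- the limit of the constant section of $\mathcal{O}(\mathcal{D}^{\circ})$ --- and then $d' := \deg \widetilde{M} \leq d$, with equality unless the limit of $\mathcal{D}^{\circ}$ picks up a component along $C$. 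The subbundle $\mathcal{V}$ extends to a locally free sheaf over $T$ mapping to $\pi_{*}\widetilde{\mathcal{L}}$ (after twisting down by a multiple of $0$, torsion free sheaves on the smooth curve $T$ being locally free), and by upper semicontinuity $h^0(\widetilde{M}) \geq h^0(\mathcal{L}_t) \geq r+1$; so after finitely many elementary modifications of $\mathcal{V}$ at $0$, the fibre map $\mathcal{V}|_0 \to H^0(\widetilde{C}, \widetilde{M})$ is injective, with image a subspace $V_0 \subset H^0(\widetilde{M})$ of dimension $r+1$.

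The crux is to show that $V_0 \to H^0(M)$ is injective, so that $(V_0, \widetilde{M})$ is a genuine generalized $g^r_{d'}$. Write $\widetilde{M}$ as the push-forward of a line bundle $\widetilde{M}'$ from the blow-up $\widetilde{C}'$ of $\widetilde{C}$ along a divisor $\beta \subset C$, as in \cite[Theorem~1.1]{EG95}; then $M = \widetilde{M}'|_C$, the conormal bundle of $\widetilde{C}'$ is $L(\beta)$, and comparing Euler characteristics gives $d' = 2\deg M + \deg \beta$. The kernel of $V_0 \to H^0(M)$ consists of ``second kind'' sections, hence lies in the image of $H^0\bigl(C, L(\beta) \otimes M\bigr) \hookrightarrow H^0(\widetilde{M}')$ coming from $\mathcal{I}_{C/\widetilde{C}'} \otimes \widetilde{M}' \hookrightarrow \widetilde{M}'$. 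If this kernel is nonzero, then $h^0\bigl(L(\beta) \otimes M\bigr) \geq 1$, hence $\deg\bigl(L(\beta) \otimes M\bigr) \geq 0$ because $C$ is irreducible; substituting $\deg L = 2g - 1 - p_a$ and $\deg M = (d' - \deg \beta)/2$ this becomes $\deg \beta \geq 2p_a - 4g + 2 - d'$. On the other hand, the Cartier section $\sigma_0$ exhibits $\widetilde{M}'$ as $\mathcal{O}_{\widetilde{C}'}(D')$ with $D'$ effective of degree $2\deg M = d' - \deg \beta \geq 0$, so $\deg \beta \leq d'$. Combining, $2p_a - 4g + 2 - d' \leq d'$, i.e.\ $p_a \leq d' + 2g - 1 \leq d + 2g - 1$, contradicting the hypothesis. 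Therefore $V_0 \to H^0(M)$ is injective, and $(V_0, \widetilde{M})$ is a generalized $g^r_{d'}$ with $d' \leq d$.

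The step I expect to be the main obstacle is the construction in the second paragraph: producing a torsion free flat limit of the line bundle that simultaneously carries a Cartier section and supports a full-rank limit of the sections, in a form compatible with the blow-up description of generalized line bundles used in the last step. Once this is set up, verifying genuineness is just the degree bookkeeping above --- which is precisely what singles out $p_a > d + 2g - 1$ as the threshold --- and the possibility $d' < d$ reflects the limit of the effective divisor $\mathcal{D}^{\circ}$ acquiring a component along the reduced curve $C$.
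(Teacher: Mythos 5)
Your third paragraph --- the degree bookkeeping showing that a nonzero kernel of $V_0 \to H^0(M)$ would force $p_a \leq d' + 2g - 1 \leq d + 2g - 1$ --- is exactly the heart of the matter and agrees with the paper's computation. However, the construction in your second paragraph, which you yourself flag as the main obstacle, is where the proposal genuinely diverges from the paper and where the gap lies. The paper does \emph{not} take a flat limit in a compactified Jacobian and does \emph{not} need a Cartier section. Instead it fixes a line bundle $\mathcal F = \omega_{\mathcal C/T}\otimes\mathcal E^{-1}$ on the whole family with $\mathcal E|_C$ a \emph{general} line bundle of degree $g-2$, embeds $\mathcal G_\eta \hookrightarrow \mathcal F_\eta$ (possible because $\deg(\mathcal F_\eta\otimes\mathcal G_\eta^{-1}) \geq p_a > 0$), and takes the limit of the sections $V$ inside $H^0(\mathcal F|_0)$. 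The limiting generalized line bundle $G_0$ is then \emph{defined} as the subsheaf of $\mathcal F|_0$ generated by $V_0$, which is automatically torsion free of degree $\leq d$, with no compactified Jacobian or Hecke modification needed.

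This matters because your Cartier section $\sigma_0$ is doing real work: it is how you deduce $\deg M \geq 0$ (equivalently $\deg\beta \leq d'$), and without it the degree chain breaks. But you have not established that the flat limit in the compactified Jacobian admits a Cartier section, that Hecke modifications can produce one while preserving both $d' \leq d$ and an injective $(r+1)$-dimensional $V_0 \hookrightarrow H^0(\widetilde M)$, nor even that the total space $\mathcal C$ is regular enough for the naive ``closure of $\mathcal D^\circ$ minus the $C$-component'' picture to yield a Cartier divisor. The paper gets $\deg G_0|_C \geq 0$ by a completely different route: because $\mathcal E|_C$ is general of degree $g-2$, the line bundle $K_C\otimes\mathcal E^{-1}|_C$ has $h^0 = 1$, so the kernel of $H^0(\mathcal F|_0)\to H^0(\mathcal F|_C)$ is exactly one-dimensional; since $\dim V_0 = r+1 \geq 2$, the restriction $V_0 \to H^0(\mathcal F|_C)$ is nonzero, whence $G_0$ has nonzero restriction to $C$ and $\deg G_0|_C \geq 0$. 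Notice that this is the one place the hypothesis $r\geq 1$ is essential --- a hypothesis your argument never genuinely invokes, which is a symptom that something in the construction is missing. In short: the degree argument is right, but the limit needs to be built by embedding into the twisted relative dualizing sheaf and using the general $\mathcal E$ to force a nonzero restriction to $C$, not by appealing to a Cartier section on a compactified-Jacobian limit.
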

\begin{proof}
  We follow the proof of \cite[Theorem~2.1]{EG95}, replacing \(\omega_{\mathcal C/T}\) by a more suitable line bundle.

  Let \(\eta\) be the generic point of \(T\).
  After a finite base change, we may assume that we have a line bundle \(\mathcal G_{\eta}\) on \(\mathcal C_{\eta}\) and an $(r+1)$-dimensional subspace \(V_{\eta} \subset H^0(\mathcal C_{\eta})\).
  Choose a line bundle \(\mathcal E\) on \(\mathcal C\) of relative degree \(2g-4\), for example, by starting with a line bundle of degree \(2g-4\) on \(\widetilde C\) and deforming it.
  Then $\mathcal E|_{C}$ is a line bundle of degree $g-2$.
  Assume that it is a general line bundle of this degree.
  
  Set \(\mathcal F = \omega_{\mathcal C/T} \otimes \mathcal E^{-1}\).
  \begin{align*}
    \deg\left(\mathcal F_{\eta} \otimes \mathcal G_{\eta}^{-1}\right) &= (2p_a-2)-(2g-4)-d \\
                                                                            &\geq p_a.
  \end{align*}
  The last inequality follows from our assumption $p_a > d + (2g-1)$.
  In particular, \(\mathcal F_{\eta} \otimes \mathcal G_{\eta}^{-1}\) is effective.
  One of its section gives an inclusion \(\mathcal G_{\eta} \subset \mathcal F_{\eta}\).
  Via this inclusion, we may think of \(V_{\eta}\) as a subspace of \(H^0(\mathcal F_{\eta})\).

  By the theorem on cohomology and base change \cite[Section~5]{mum}, there exists a map \(K^0 \to K^1\) of locally free \(\mathcal O_T\) modules of finite rank such that for every \(S \to T\), we have a canonical isomorphism
  \[ \pi_{*} (\mathcal C_T \times S, \mathcal F \times S) = \ker(K^0_S \to K^1_{S}).\]
  We have the subspace \(V_{\eta} \subset K^0_{\eta}\) of dimension $r+1$.
  It extends to a locally free $\mathcal O_{T}$ module of the same rank \(V \subset K^{0}\) such that the map \(V|_0 \to K^0|_0\) remains injective.
  Since \[V_{\eta} \subset \ker(K^0_{\eta} \to K^1_{\eta}) = H^0(\mathcal F_{\eta}),\] we have \[V \subset \ker (K^0 \to K^1) = \pi_{*}(\mathcal F)\]
  and \[V|_0 \subset \ker(K^0|_0 \to K^1|_0) = H^0(\mathcal F|_0)\]
  by continuity.
  
  We have the exact sequence
  \[ 0 \to K_C \to K_{\widetilde C} \to K_{\widetilde C}|_C \to 0.\]
  Tensoring by \(\mathcal E^{-1}\) yields the exact sequence
  \begin{equation}
    0 \to K_C \otimes \mathcal E^{-1} \to \mathcal F|_{0} \to \mathcal F|_{C} \to 0.
  \end{equation}
  We have \[\deg (K_{C} \otimes \mathcal  E^{-1})= (2g-2)-(g-2) = g.\]
  Since $\mathcal E|_{C}$ is general, so is $K_C \otimes \mathcal E^{-1}$, and hence $h^0(K_C \otimes \mathcal E^{-1}|_C) = 1$.
  By the long exact sequence, we see that
  \begin{equation}\label{eqn:negative kernel}
   \dim \ker \left( H^0(\mathcal F|_0) \to H^0(\mathcal F|_C) \right) = 1.
  \end{equation}

  Let \(\mathcal G\) be the subsheaf of \(\mathcal F\) generated by \(V\) and let \(G_0 \subset \mathcal F|_0\) be the image of \(\mathcal G\).
  Equivalently, \(G_0\) is the subsheaf of \(\mathcal F|_0\) generated by \(V_0\).
  Then $G_0$ is torsion free.
  Since $\dim V_0 = (r+1)$ is greater than the dimension $1$ of the kernel of $H^0(\mathcal F|_0) \to H^0(\mathcal F|_C)$, the restriction map $V_0 \to H^0(\mathcal F|_C)$ is non-zero.
  Then it follows that the map $G_0 \subset \mathcal F|_0$ is an isomorphism at the generic point.
  Therefore, $G_0$ is a generalized line bundle.
  Since we have a surjection $\mathcal G|_0 \to G_{0}$, and $\deg \mathcal G|_0 = d$, we conclude that $\deg G_0 \leq d$.
  
  Set \(G = G_0|_C / \textrm{torsion}\).
  We know that the map $V_0 \to H^0(G)$ is non-zero, so $\deg G \geq 0$.
  It remains to check that the map $V_0 \to H^0(G)$ is injective.
  Let $\beta \subset C$ be the divisor such that $G_0$ is the push-forward of a line bundle from the blow-up of $\widetilde C$ along $\beta$.
  Let $L$ be the conormal bundle of $\widetilde C$.
  Then we have the exact sequence
  \[ 0 \to G \otimes L(\beta) \to G_{0} \to G \to 0;\]
  see \cite[\S~1]{EG95}.
  Let $d' = \deg G_0$.
  Then \[\deg G + \deg \beta \leq 2\deg G + \deg \beta = d',\] so
  \[ \deg (G \otimes L(\beta)) \leq d' + \deg L =  d' +(2g-1) - p_a \leq d + (2g-1) - p_a < 0,\]
  where the last inequality follows from $p_a > d + (2g-1)$.
  As a result, $H^0(G \otimes L(\beta)) = 0$, and hence the map $H^0(G_0) \to H^0(G)$ is injective.
  Therefore, the composite $V_0 \subset H^0(G_0) \to H^0(G)$ is injective.
\end{proof}

\begin{corollary}\label{semicontinuity of gonality}
  In the setup of \Cref{limit grd}, suppose the generic fibers $\mathcal C_t$ have gonality $d$.
  If $p_a > d+2g-1$, then $\mathcal C$ has gonality at most $d$.
  In particular, if $p_a > 4g+1$, then $\mathcal C$ has gonality at most $d$.
\end{corollary}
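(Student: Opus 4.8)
The plan is to apply \Cref{limit grd} with $r = 1$ and then to invoke the classical Brill--Noether bound on the gonality of a smooth curve in order to deduce the second statement from the first. For the first statement, observe that since $\mathcal C_t$ has gonality $d$ for all $t \neq 0$, each such fiber carries a classical $g^1_d$, hence in particular a generalized $g^1_d$. The hypotheses of \Cref{limit grd} are met with $r = 1$: we have $r = 1 \geq 1$, and $p_a > d + 2g - 1$ holds by assumption. Hence the central fiber $\widetilde C = \mathcal C_0$ admits a generalized $g^1_{d'}$ with $d' \leq d$. Since the gonality of a ribbon is by definition the least degree of a generalized $g^1$, this gives $\gon(\widetilde C) \leq d' \leq d$; that the limiting pencil might have strictly smaller degree is harmless, as we only want an upper bound.

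For the second statement, I would recall that every smooth projective curve of genus $p_a$ has gonality at most $\lfloor (p_a + 3)/2 \rfloor$ by Brill--Noether existence; applied to a general fiber $\mathcal C_t$ this gives $d \leq (p_a+3)/2$. Then
\[
  d + 2g - 1 \;\leq\; \frac{p_a + 3}{2} + 2g - 1 \;<\; p_a,
\]
the last inequality being equivalent to $p_a > 4g+1$. Thus the hypothesis $p_a > 4g+1$ forces $p_a > d + 2g - 1$, so the first statement applies and yields $\gon(\widetilde C) \leq d$.

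I do not expect any genuine obstacle here: all the substantive work is carried out in \Cref{limit grd}, and what remains is merely the specialization $r = 1$ together with the elementary numerical estimate above. The one point to watch is the distinction between classical and generalized linear series, but it causes no trouble since the gonality of the ribbon is defined using generalized linear series in the first place.
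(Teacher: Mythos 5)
Your proposal is correct and matches the paper's proof essentially verbatim: apply \Cref{limit grd} with $r = 1$ for the first statement, and for the second use the Brill--Noether bound $d \leq (p_a+3)/2$ to deduce that $p_a > 4g+1$ forces $p_a > d + 2g - 1$. The numerical manipulation checks out, so there is nothing to add.
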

\begin{proof}
  The first statement follows directly from Proposition\ref{limit grd}.
  For the second statement, observe that $d \leq (p_a+3)/2$.
  So $p_a > 4g+1$ implies $p_a > d+2g-1$.
\end{proof}
In \Cref{semicontinuity of gonality}, the condition \(p_a > d+2g-1\) is indeed necessary.
See \Cref{ex:ellipticg13} for the failure of the existence of a limiting \(g^1_3\) without this condition.

\begin{corollary}\label{semicontinuity of Clifford index and Clifford dimension}
  In the setup of \Cref{limit grd}, suppose the generic fibers $\mathcal C_t$ have Clifford index $c$ and Clifford dimension $r$.
  If $p_a > c+2r + (2g-1)$, then $\LCliff(\widetilde C) \leq c$.
  In particular, if $p_a > 4g-3 + 4r$, then $\LCliff(\widetilde C) \leq c$.
\end{corollary}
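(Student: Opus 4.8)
The plan is to deduce this directly from \Cref{limit grd}. By the definition of the Clifford dimension, the generic fiber $\mathcal C_{\eta}$ carries a line bundle computing its Clifford index with $h^0 = r+1$; equivalently, for $t$ in a dense open subset of $T$ the fiber $\mathcal C_t$ has a $g^r_d$ with $d - 2r = c$. Shrinking $T$ around $0$, I may assume that every $\mathcal C_t$ with $t \neq 0$ carries such a $g^r_d$. Since a line bundle computing the Clifford index has $h^0 \geq 2$, we have $r \geq 1$. The hypothesis $p_a > c + 2r + (2g-1)$ is exactly the inequality $p_a > d + (2g-1)$, so the hypotheses of \Cref{limit grd} are satisfied, and it produces a generalized $g^r_{d'}$ on $\widetilde C$ with $d' \leq d$.

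Next I would check that this generalized linear series is of the kind allowed in the definition of $\LCliff(\widetilde C)$, namely $r \geq 1$ and $d' \leq p_a - 1$. The first is noted above. For the second, $d' \leq d = c + 2r < p_a - (2g-1) \leq p_a - 1$ because $g \geq 1$. The Clifford index of this series is $d' - 2r \leq d - 2r = c$, whence $\LCliff(\widetilde C) \leq c$, which is the first assertion.

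For the ``in particular'' statement, I would invoke the classical bound $c \leq \lfloor (p_a - 1)/2 \rfloor$ on the Clifford index of a smooth curve of genus $p_a$. Then $c + 2r + (2g-1) \leq \tfrac{p_a-1}{2} + 2r + 2g - 1$, and clearing denominators shows that the right-hand side is $< p_a$ precisely when $p_a > 4g - 3 + 4r$. Hence under that hypothesis the inequality $p_a > c + 2r + (2g-1)$ holds, and we are reduced to the first part.

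I do not expect a genuine obstacle here, as all the real work is carried by \Cref{limit grd}. The one point demanding attention is verifying that the limiting series $g^r_{d'}$ really contributes to $\LCliff(\widetilde C)$ --- in particular that $d' \leq p_a - 1$ --- which is exactly why the strict inequality $p_a > c + 2r + (2g-1)$ (and not merely ``$\geq$'') is assumed; the remaining numerology in the last step is elementary.
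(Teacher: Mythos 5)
Your proof is correct and follows exactly the route the paper intends: apply \Cref{limit grd} to the $g^r_d$ afforded by the Clifford dimension, then check the resulting series $g^r_{d'}$ contributes to $\LCliff(\widetilde C)$, and for the ``in particular'' part reduce to the first via $c \leq (p_a-1)/2$. The paper simply compresses all of this into ``follows directly from Proposition~\ref{limit grd}'' plus the arithmetic for the last clause, so you have merely spelled out the same argument in more detail.
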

\begin{proof}
  The first statement follows directly from Proposition\ref{limit grd}.
  For the second statement, observe that $c \leq (p_a-1)/2$.
  So $p_a > 4g-3 + 4r$ implies $p_a > c+2r+(2g-1)$.
\end{proof}

Using the results of \cite{ELMS89}, we eliminate the dependence of $p_a$ on the Clifford dimension $r$ in Corollary~\ref{semicontinuity of Clifford index and Clifford dimension}.
\begin{corollary}\label{semicontinuity of Clifford index}
  In the setup of \Cref{limit grd}, assume that the fibers $\mathcal C_t$ have Clifford index $c$ and Clifford dimension $r$.
  Then $\LCliff(\widetilde C) \leq c$ holds under any of the following hypotheses:
\begin{enumerate}
\item $r = 1$ and $p_a > 4g+1$,
\item $p_a$ is odd and $p_a > 8g+1$,
\item $p_a$ is even, \((p_{a}, c, d) \neq (4r-2, 2r-3,4r-3)\), and $p_a > 8g+1$. 
\end{enumerate}
\end{corollary}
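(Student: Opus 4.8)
The plan is to reduce each case to \Cref{semicontinuity of Clifford index and Clifford dimension}, whose conclusion requires the bound $p_a > c + 2r + (2g-1)$, by invoking the classification results of \cite{ELMS89} to control the Clifford dimension $r$ in terms of $c$ and $p_a$. Recall from \cite{ELMS89} that a smooth curve of Clifford dimension $r \geq 3$ is quite special: its Clifford index is $c = 2r - 3$, and such curves have genus $p_a \geq 4r - 2$, with the extremal case $p_a = 4r-2$ realized precisely by the smooth plane curves of degree $2r-1$ (giving the excluded triple $(p_a, c, d) = (4r-2, 2r-3, 4r-3)$). The Clifford dimension is $1$ or $2$ for all other curves, with $r = 2$ occurring only when $C$ carries a $g^2_d$ computing the Clifford index (so $c = d - 4$, and such a curve has a birational map to a plane curve).

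First, for case (1), $r = 1$ gives $c + 2r + (2g-1) = c + 2g + 1 \leq (p_a - 1)/2 + 2g + 1$, and the hypothesis $p_a > 4g + 1$ forces $p_a > (p_a-1)/2 + 2g + 1$, hence the bound of \Cref{semicontinuity of Clifford index and Clifford dimension} holds and we are done; this is just a restatement of \Cref{semicontinuity of gonality} combined with the $r=1$ case of the previous corollary. Next, for case (2): since $p_a$ is odd, $C$ cannot be a smooth plane curve (those have even genus $(d-1)(d-2)/2$ only when $d \equiv 0,1 \pmod 4$—more to the point, the extremal Clifford-dimension-$r$ curves have genus $4r-2$, which is even), so the excluded triple never arises and it suffices to run the argument of case (3). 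For case (3): we need $p_a > c + 2r + (2g-1)$. If $r \leq 2$ this follows from $c \leq (p_a-1)/2$ and $p_a > 8g+1 > 4g+3$ by the same elementary estimate as above (with $2r \leq 4$). If $r \geq 3$, then by \cite{ELMS89} we have $c = 2r-3$ and $p_a \geq 4r-2$; excluding the triple $(4r-2, 2r-3, 4r-3)$ leaves $p_a \geq 4r-1$, so $r \leq (p_a+1)/4$, whence
\[
c + 2r + (2g-1) = (2r-3) + 2r + (2g-1) = 4r + 2g - 4 \leq (p_a + 1) + 2g - 4 = p_a + 2g - 3 < p_a,
\]
the last inequality because $p_a > 8g+1 > 3 - 2g$. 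In all sub-cases the hypothesis of \Cref{semicontinuity of Clifford index and Clifford dimension} is met, so $\LCliff(\widetilde C) \leq c$.

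The only genuinely delicate point is the bookkeeping in case (3) when $r \geq 3$: one must be careful that the sole extremal configuration excluded by hypothesis is exactly the one where the inequality $p_a \geq 4r-1$ could fail, and that for smaller Clifford dimension the coarse bound $2r \leq 4$ together with $p_a > 8g+1$ is comfortably enough—indeed $8g+1$ is far more than needed here and is presumably dictated by other parts of the paper. I expect no real obstacle beyond citing \cite{ELMS89} correctly and tracking the parity constraint that kills the exceptional triple in case (2); a remark recording that the triple $(4r-2,2r-3,4r-3)$ is the smooth plane curve of degree $2r-1$ may be worth including for the reader.
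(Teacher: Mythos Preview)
Your overall strategy matches the paper's---reduce to \Cref{semicontinuity of Clifford index and Clifford dimension} by controlling $r$ via \cite{ELMS89}---but the $r \geq 3$ subcase of (3) contains a genuine error. Your final chain of inequalities reads $4r + 2g - 4 \leq p_a + 2g - 3 < p_a$, and the last step requires $2g - 3 < 0$, i.e.\ $g \leq 1$; the justification ``because $p_a > 8g+1 > 3 - 2g$'' is a non sequitur. The root cause is that the bound $p_a \geq 4r-1$ you extract is too weak: it only gives $4r \leq p_a + 1$, which cannot beat the $4g$ term. (Separately, the assertion that \cite{ELMS89} proves $c = 2r-3$ for every curve of Clifford dimension $r \geq 3$ is not right---that is their conjecture, established only for small $r$.)

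The paper's proof instead invokes \cite[Corollary~3.5]{ELMS89}, which gives the stronger bound $p_a \geq 8r-7$ once the excluded triple is ruled out (automatically in case (2) since $4r-2$ is even). This yields $4r \leq (p_a+7)/2$, and then the condition $p_a > 4g - 3 + 4r$ of \Cref{semicontinuity of Clifford index and Clifford dimension} becomes exactly $p_a > 8g+1$. So the threshold $8g+1$ is not slack at all for large $r$: it is calibrated to the ELMS bound $8r-7$, and your weaker input $p_a \geq 4r-1$ cannot reach it.
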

\begin{proof}
  Under the first hypothesis, the conclusion is Corollary\ref{semicontinuity of gonality}.
  Under the second or third hypothesis, \cite[Corollary~3.5]{ELMS89} says that $p_a\geq 8r-7$.
  Then $p_a > 8g+1$ implies $p_a > 4g-3 + 4r$.
  Therefore, the conclusion follows from Corollary~\ref{semicontinuity of Clifford index and Clifford dimension}.
\end{proof}

\subsection{Green-Lazarsfeld non-vanishing theorem for ribbons}
In this section, we relate the linear series Clifford index and the resolution Clifford index of a ribbon.
We show that, for $p_a$ large compared to $g$, we have the inequality $\RCliff \leq \LCliff$.

\begin{theorem}\label{GL vanishing 1}
  Let $\widetilde{C}$ be a ribbon of arithmetic genus $p_a$ on smooth  curve $C$ of genus $g$ with $h^0(\mathcal{O}_{\widetilde C}) = 1$.
  Let $\widetilde{M}_1$ and $\widetilde{M}_2$ be line bundles on $\widetilde C$ and set \[\widetilde{M} = \widetilde{M}_1 \otimes \widetilde{M}_2.\]
  For $i = 1,2$, let $H^0(\widetilde{M}_i)$ be of dimension $r_i+1$, with $r_i \geq 1$.
  Assume that
  \begin{enumerate}
  \item $\widetilde{M}_1$ is base point free, and
   \item the zero locus of a general element of $H^0(\widetilde{M}_2)$ is zero dimensional.
\end{enumerate}
Then
\begin{equation*}
    K_{r_1+r_2-1,1}(\widetilde{C}, \widetilde{M}) \neq 0
\end{equation*}
\end{theorem}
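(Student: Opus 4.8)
The plan is to mimic the classical Green--Lazarsfeld nonvanishing theorem, which produces a nonzero Koszul class from a decomposition of the line bundle as a tensor product of two effective bundles. The first step is to reduce to a bookkeeping statement about the Koszul complex. Recall that $K_{r_1+r_2-1,1}(\widetilde C,\widetilde M)$ is computed as the middle cohomology of
\[
\textstyle\bigwedge^{r_1+r_2}H^0(\widetilde M)\to\bigwedge^{r_1+r_2-1}H^0(\widetilde M)\otimes H^0(\widetilde M)\to\bigwedge^{r_1+r_2-2}H^0(\widetilde M)\otimes H^0(2\widetilde M).
\]
Following Green--Lazarsfeld, I would choose a general section $s_2\in H^0(\widetilde M_2)$ whose zero locus $D$ is zero-dimensional (possible by hypothesis (2)), so that multiplication by $s_2$ identifies $H^0(\widetilde M_1)$ with a subspace $W\subset H^0(\widetilde M)$, and then pick a basis $x_0,\dots,x_{r_1}$ of $W$ together with elements $y_1,\dots,y_{r_2}\in H^0(\widetilde M)$ so that $x_0,\dots,x_{r_1},y_1,\dots,y_{r_2}$ are linearly independent (using $h^0(\widetilde M_i)=r_i+1$ and $h^0(\widetilde M_1)\cap s_2\cdot$-image considerations). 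The candidate Koszul cycle is
\[
\alpha=\sum_{j=1}^{r_2}(-1)^j\, x_0\wedge\cdots\wedge x_{r_1}\wedge y_1\wedge\cdots\widehat{y_j}\cdots\wedge y_{r_2}\otimes (s_2 \tilde y_j),
\]
or, more cleanly, the image under the Koszul differential of $x_0\wedge\cdots\wedge x_{r_1}\wedge y_1\wedge\cdots\wedge y_{r_2}$ after we quotient by the subcomplex built from $W$ — this is exactly the mechanism in \cite{G84b}, \cite{GL86}. I expect that the ribbon structure causes no trouble here because this step is pure multilinear algebra on vector spaces of sections, and $h^0(\mathcal O_{\widetilde C})=1$ guarantees there are no unexpected constants.

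Second, I would verify the cycle condition: $\alpha$ is a genuine Koszul cocycle, i.e. it maps to zero under the second differential above. This uses only that the $x_i$ come from $H^0(\widetilde M_1)$ after multiplying by the \emph{single} section $s_2$; the relations $x_i\cdot(s_2 y_j) - x_{i'}\cdot(\dots)$ collapse because all the $x_i$ are divisible by $s_2$ in the section ring of $\widetilde M$. In the classical argument this is where base-point-freeness of $\widetilde M_1$ enters: it ensures that, locally, the $x_i$ generate and one can write the required syzygies. I would keep this verification at the level of the section ring $\bigoplus_k H^0(k\widetilde M)$ and its module over $\bigoplus_k H^0(k\widetilde M_1)$.

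Third — and this is the main obstacle — I must show $\alpha$ is not a coboundary, i.e. does not lie in the image of $\bigwedge^{r_1+r_2}H^0(\widetilde M)\to\bigwedge^{r_1+r_2-1}H^0(\widetilde M)\otimes H^0(\widetilde M)$. The standard device is to produce a linear functional on $\bigwedge^{r_1+r_2-1}H^0(\widetilde M)\otimes H^0(\widetilde M)$ that annihilates the image of the first differential but not $\alpha$. Concretely, one restricts sections to the zero locus $D$ of $s_2$ (a length-$\deg\widetilde M_2$ subscheme, possibly non-reduced, lying inside $\widetilde C$), uses that $x_0|_D=\cdots=x_{r_1}|_D=0$ since each $x_i$ is a multiple of $s_2$, and evaluates the $H^0(\widetilde M)$-tensor factor against a functional detecting $s_2$ at a point of $D$ where $\widetilde M_1$ is globally generated by $x_0,\dots,x_{r_1}$ in a way making the $\bigwedge^{r_1}$ of their values nonzero. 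The delicate point for ribbons is that ``evaluation at a point'' and ``the zero locus of a section'' behave differently on the non-reduced curve; I would handle this by passing, if necessary, to the blow-up $\widetilde C'$ of $\widetilde C$ along a divisor where $\widetilde M_1$ or $\widetilde M_2$ has torsion issues, exactly as in \cite[\S 1]{EG95}, or simply by working with a general $s_2$ so that $D$ meets the ribbon transversally and avoids the finitely many bad points. Once the nonzero functional is in hand, the conclusion $K_{r_1+r_2-1,1}(\widetilde C,\widetilde M)\neq 0$ follows. An alternative to the explicit-functional approach, which I would fall back on if the evaluation geometry gets messy, is the duality/Koszul-sheaf reformulation: express $K_{r_1+r_2-1,1}$ as $H^1$ of a Koszul-type bundle twisted by $\widetilde M$ and show the connecting map fails to surject, using the exact sequence $0\to\widetilde M_1\xrightarrow{s_2}\widetilde M\to\widetilde M|_D\to 0$ together with $h^1$-estimates coming from the hypotheses $r_1,r_2\geq 1$ — but I expect the explicit cocycle to be cleaner here.
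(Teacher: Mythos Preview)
Your approach is the same as the paper's: both run the Green--Lazarsfeld nonvanishing mechanism from \cite[Appendix]{G84}, and the paper in fact says ``the rest of the proof follows verbatim'' from that reference. So the strategy is correct.

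Two points where the paper's execution is cleaner than your sketch. First, your explicit cocycle is muddled: $\tilde y_j$ is undefined, and as written your wedge sits in $\bigwedge^{r_1+r_2}H^0(\widetilde M)\otimes H^0(\widetilde M)$, one degree too high. The paper uses Green's dual-basis formulation: pick Cartier sections $s_1\in H^0(\widetilde M_2)$ and $s_2\in H^0(\widetilde M_1)$ with disjoint zero loci $D_1,D_2$; then $H^0(\widetilde M(-D_1))$ and $H^0(\widetilde M(-D_2))$ meet in the one-dimensional span of the section vanishing on $D_1+D_2$ (here is where $h^0(\mathcal O_{\widetilde C})=1$ enters), and the element $s\wedge\alpha$ of \cite[Appendix]{G84}, built from bases of these two subspaces and the dual basis, gives the class.

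Second, and more substantively, your proposed handling of the non-reducedness --- blowing up, or arranging that $D$ ``meets the ribbon transversally'' --- is an unnecessary detour. The paper's insight is simpler: on a ribbon, sections whose zero locus is zero-dimensional (the \emph{Cartier} sections) form a Zariski-open subset of $H^0$ whenever one such section exists. Hypotheses (1) and (2) guarantee this for $\widetilde M_1$ and $\widetilde M_2$, hence for $\widetilde M$ and for the subspaces $H^0(\widetilde M(-D_i))$. One therefore chooses all bases to consist of Cartier sections, after which zero loci are honest Cartier divisors and Green's argument runs without modification. No blow-up, no evaluation subtleties.
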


\begin{proof}
  Let $s_1 \in H^0(\widetilde M_2)$ be a section whose scheme-theoretic zero locus $D_1$ is zero dimensional.
  Let $s_2 \in H^0(\widetilde M_1)$ be a section whose scheme-theoretic zero locus $D_2$ is disjoint from $D_1$.
  Note that both $D_1$ and $D_2$ are Cartier diviors on $\widetilde C$.
  Then, up to scaling, there is a unique section $s_0$ of $H^0(\widetilde{M})$ that vanishes on both $D_1$ and $D_2$;
  it is the section whose scheme-theoretic zero locus is $D_1+D_2$.

  The rest of the proof follows verbatim from \cite[Appendix]{G84}.
  We sketch it for the convenience of the reader.
  Recall that a Cartier section of a line bundle on $\widetilde C$ is a section that defines an injection from $\mathcal{O}_{\mathcal{C}}$.
  The scheme theoretic zero locus of a Cartier section is a Cartier divisor.
  Since $s \in H^0(\widetilde{M})$ is Cartier, a general element of $H^0(\widetilde M)$ is Cartier.
  Consider $H^0(\widetilde{M}(-D_1))$ (resp. $H^0(\widetilde{M}(-D_2))$) seen as subspaces of $H^0(\widetilde{M})$.
  These two subspaces intersect along the one-dimensional subspace spanned by $s$.
  These subspaces contain Cartier sections $s_2$ and $s_1$, respectively, so their general section is Cartier.
  Choose bases of these subspaces consisting of Cartier sections as follows.
  Let \[s_0, s_1, ..., s_{r_2}\] be a basis of $H^0(\widetilde{M}(-D_2))$, and 
  \[s_0, s_{r-r_1+1},...,s_r\]
  a basis of $H^0(\widetilde{M}(-D_1))$.
  Extend to a basis of $H^0(\widetilde M)$ by adding Cartier sections
  \[s_{r_2+1},...,s_{r-r_1}.\]
Let $\{e_0, \dots, e_{r}\}$ be the dual basis of $H^0(\widetilde{M})^*$.
Let $\iota = \sum_{i=1}^{r-r_1} e_i \otimes s_i$ and $s = \sum_{i=0}^{r} e_i \otimes s_i $. Let $$ \alpha = \iota \wedge e_{r_2+1} \wedge...\wedge e_{r-r_1} $$
Then $\alpha \in \bigwedge^{r-r_1-r_2+1} H^0(\widetilde{M})^* \otimes H^0(\widetilde{M}(-D_2))$.
Consider $s \wedge \alpha$.
We have that
\begin{align*}
  s \wedge \alpha &\in \bigwedge^{r-r_1-r_2+2} H^0(\widetilde{M})^* \otimes H^0(\widetilde{M}(-D_2)\otimes\widetilde{M}(-D_1)) \\
                  & = \bigwedge^{r-r_1-r_2+2} H^0(\widetilde{M})^* \otimes H^0(\widetilde{M}) \\
  &= \bigwedge^{r_1+r_2-1} H^0(\widetilde{M}) \otimes H^0(\widetilde{M}).
\end{align*}
Since $s \wedge s \wedge \alpha = 0$, we see that $s$ defines a Koszul cocycle, that is, an element of $K_{r_1+r_2-1,1}(\widetilde{C}, \widetilde{M})$.
The fact that this element is non-zero follows exactly as in \cite[Appendix]{G84}.
\end{proof}


We now examine when line bundles residual to the canonical carry Cartier sections.
\begin{lemma}\label{Cartier divisor in residual}
  Let $\widetilde{C}$ be a ribbon of arithmetic genus $p_a$ on a smooth  curve $C$ of genus $g$.
  Let $\widetilde{M}_1$ be a line bundle on $\widetilde{C}$ with $h^0(\widetilde{M}_1) \geq r+1$ and $r \geq 1$.
  Set $d = \deg \widetilde M_1$ and $c = d-2r$.
  Let $\widetilde M_2 = K_{\widetilde C} \otimes \widetilde M_1^{-1}$.
  Assume that $d \leq p_a-1$ and $H^{0}(\widetilde M_1)$ contains a Cartier section.
  If $p_a > 3g-2+c$ then $h^0(\widetilde M_2) \geq r+1$ and $H^0(\widetilde{M}_2)$ contains a Cartier section.
\end{lemma}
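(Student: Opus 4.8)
## Proof Plan for Lemma~\ref{Cartier divisor in residual}

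The plan is to exploit Serre duality on the ribbon together with the hypothesis that $\widetilde{M}_1$ carries a Cartier section $s$, whose zero locus is a genuine effective Cartier divisor $D$ of degree $d$ on $\widetilde{C}$. First I would use the exact sequence
\[ 0 \to K_C \to K_{\widetilde C} \to K_C \otimes L^{-1} \to 0 \]
(recalling from Section~\ref{sec:cliff} that $K_{\widetilde C}|_C = K_C \otimes L^{-1}$ and $p_a = 2g-1-\deg L$) to control $h^0$ and $h^1$ of line bundles on $\widetilde{C}$ via their counterparts on $C$. Writing $\widetilde{M}_1 = \mathcal{O}_{\widetilde C}(D)$ and setting $M_1 = \widetilde{M}_1|_C$ (a line bundle, since $s$ is Cartier so $D$ restricts to an honest divisor on $C$), I get an exact sequence $0 \to M_1 \otimes L \to \widetilde{M}_1 \to M_1 \to 0$, and dually $0 \to \widetilde{M}_2 \otimes L \to \widetilde{M}_2 \to \widetilde{M}_2|_C \to 0$ with $\widetilde{M}_2 = K_{\widetilde C} \otimes \widetilde{M}_1^{-1}$.

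The bound $h^0(\widetilde{M}_2) \geq r+1$ should come from Serre duality on $\widetilde{C}$: $h^0(\widetilde{M}_2) = h^0(K_{\widetilde C} \otimes \widetilde{M}_1^{-1}) = h^1(\widetilde{M}_1)$, and by Riemann–Roch on $\widetilde{C}$, $h^1(\widetilde{M}_1) = h^0(\widetilde{M}_1) - \deg \widetilde{M}_1 + p_a - 1 \geq (r+1) - d + p_a - 1 = r+1 + (p_a - 1 - d - r) = (p_a - d - 1) + (1 - c/...)$; more precisely $h^1(\widetilde{M}_1) \geq r+1 - (d - 2r) + (p_a - 1 - 2r) $... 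I would just compute: $h^1(\widetilde M_1) = h^0(\widetilde M_1) - \deg \widetilde M_1 - 1 + p_a \geq (r+1) - d - 1 + p_a = p_a - d + r - c + ...$. Using $d = 2r + c$, this gives $h^1(\widetilde M_1) \geq p_a - 1 - r - c \geq r+1$ provided $p_a \geq 2r + c + 2$; since $d \leq p_a - 1$ gives $2r + c \leq p_a - 1$ we need a bit more, but $p_a > 3g - 2 + c$ together with $r \geq 1$ and the ribbon analogue of Clifford's inequality $2r \le d \le p_a -1$ should close this. (I will need to be careful that Clifford-type bounds hold for generalized line bundles on ribbons — but here $\widetilde M_1$ is an honest line bundle with a Cartier section, so this should be manageable.)

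The harder half is producing a \emph{Cartier} section of $\widetilde{M}_2$. The idea: a Cartier section of $\widetilde{M}_2$ corresponds to an effective Cartier divisor $D'$ with $D + D' \in |K_{\widetilde C}|$, equivalently to a nonzero element of $H^0(\widetilde{M}_2)$ that restricts to a nonzero section of $\widetilde{M}_2|_C = K_C \otimes L^{-1} \otimes M_1^{-1}$. So I must show the restriction map $H^0(\widetilde{M}_2) \to H^0(K_C \otimes L^{-1} \otimes M_1^{-1})$ is nonzero. From the sequence $0 \to \widetilde{M}_2 \otimes L \to \widetilde{M}_2 \to K_C \otimes L^{-1} \otimes M_1^{-1} \to 0$, the map fails to be nonzero only if $H^0(\widetilde{M}_2) = H^0(\widetilde{M}_2 \otimes L)$, i.e. every section of $\widetilde M_2$ is "non-Cartier" and kills $C$. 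I would bound $h^0(\widetilde{M}_2 \otimes L)$: by duality $\widetilde M_2 \otimes L $ has $h^0 = h^1(\widetilde M_1 \otimes L^{-1}(\text{twist}))$... cleaner is to note $\widetilde{M}_2 \otimes L$ sits in $0 \to K_C \to \widetilde M_2 \otimes L \to$ (something on $C$ of the form $K_C \otimes L^{-1} \otimes M_1^{-1} \otimes L = K_C \otimes M_1^{-1}$) — wait, I'd recompute restrictions carefully — and then use that $K_C \otimes M_1^{-1}$ has degree $2g - 2 - \deg M_1$. Since $\deg M_1 $ and $\deg \widetilde M_1 = d$ are related by $\deg \widetilde M_1 = \deg M_1 + \deg(M_1 \otimes L)|$... in fact for a Cartier divisor $\deg_C D = \deg_{\widetilde C} D = d$, so $\deg M_1 = d$. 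Thus $K_C \otimes M_1^{-1}$ has degree $2g - 2 - d$, which is negative once $d \geq 2g-1$, and in general $h^0(K_C \otimes M_1^{-1}) \leq \max(0, g - ...)$. Combining, $h^0(\widetilde M_2 \otimes L) \leq h^0(K_C) + h^0(K_C \otimes M_1^{-1}) \leq g + h^0(K_C \otimes M_1^{-1})$, and I need this to be strictly less than $h^0(\widetilde M_2) \geq r+1$. The inequality $p_a > 3g - 2 + c$ is exactly what should make $r + 1 > g + h^0(K_C \otimes M_1^{-1})$ work out, using $h^0(K_C \otimes M_1^{-1}) \le g - \deg M_1 + (\dim \text{of } W) $ type estimates or just $h^0 \le \max(0, 2g-1-d)$ plus Clifford on $C$.

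\medskip

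I expect the \textbf{main obstacle} to be the bookkeeping in the second half: pinning down the precise restriction sequences for $\widetilde{M}_2$ and $\widetilde{M}_2 \otimes L$ (getting the twists by $L$ exactly right), and then showing that the numerical hypothesis $p_a > 3g - 2 + c$ — rather than something weaker or stronger — is precisely what forces $h^0(\widetilde{M}_2) > h^0(\widetilde{M}_2 \otimes L)$, so that a section non-vanishing on $C$, i.e.\ a Cartier section, must exist. The estimate on $h^0(K_C \otimes M_1^{-1})$ will likely require splitting into cases according to whether $\deg M_1 = d$ is larger or smaller than $2g-2$, and invoking Clifford's theorem on the smooth curve $C$ in the latter case. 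Everything else (the $h^0 \geq r+1$ part and the reduction of "Cartier section" to "section nonzero on $C$") is routine given the exact sequences already set up in Section~\ref{sec:cliff}.
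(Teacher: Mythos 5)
Your overall strategy matches the paper's: reduce to showing that the restriction map $H^0(\widetilde M_2)\to H^0(M_2)$ is nonzero, i.e.\ that $h^0(\widetilde M_2) > h^0(M_2\otimes L)$ where $M_2\otimes L = K_C\otimes M_1^{-1}$, get $h^0(\widetilde M_2)$ large via Riemann--Roch, and bound $h^0(M_2\otimes L)$. But the execution contains errors that would sink the argument as written.

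The most consequential error is the claim that $\deg_C D = \deg_{\widetilde C} D = d$, hence $\deg M_1 = d$. That is false: for a line bundle on a ribbon one has $\deg\widetilde M_1 = 2\deg M_1$ (compare Euler characteristics using $0\to M_1\otimes L\to\widetilde M_1\to M_1\to 0$), so $\deg M_1 = d/2$, and $\deg(K_C\otimes M_1^{-1}) = 2g-2-d/2$, not $2g-2-d$. This matters because it is exactly the threshold $d > 4g-4$ at which $\deg(M_2\otimes L)<0$ kills the problem outright, which is the case split the paper makes. Second, you write "$\widetilde M_2\otimes L$ sits in $0\to K_C\to\widetilde M_2\otimes L\to\cdots$" — but $\widetilde M_2\otimes L = M_2\otimes L$ is a line bundle on $C$, not an extension of two sheaves, so the bound $h^0(\widetilde M_2\otimes L)\le h^0(K_C)+h^0(K_C\otimes M_1^{-1})$ that you extract from this is spurious. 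Third, at the end you compare $h^0(M_2\otimes L)$ against $r+1$; but you have the much stronger bound $h^0(\widetilde M_2) = p_a-d-1+h^0(\widetilde M_1)\ge p_a-d+r$ from Riemann--Roch, and it is this quantity, not $r+1$, that you must show exceeds $h^0(M_2\otimes L)$. Indeed $r+1 > g$ generally fails. The correct chain is: after the case split to $d\le 4g-4$ one gets $r\le 2g-2$, and then $p_a > 3g-2+c$ with $c=d-2r$ gives $p_a-d+r > g$.

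Finally, the paper obtains $h^0(M_2\otimes L)\le g$ without any Clifford-type case analysis on $C$: multiplication by the Cartier section of $\widetilde M_1$ gives an injection $\widetilde M_2\hookrightarrow K_{\widetilde C}$, which restricts on the sub-line-bundles (kernels of restriction to $C$) to an injection $M_2\otimes L\hookrightarrow K_C$, whence $h^0(M_2\otimes L)\le g$ immediately. Your proposed alternative — noting that $h^0(K_C\otimes M_1^{-1}) = h^1(M_1)\le g$ because $M_1$ is effective (the Cartier section restricts to a nonzero section of $M_1$) — would also work once the degree bookkeeping is fixed, and is essentially the same observation in a different guise. So the route is the right one, but you would need to correct the degree of $M_1$, drop the phantom extension, and compare against the full Riemann--Roch bound for $h^0(\widetilde M_2)$ rather than $r+1$.
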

\begin{proof}
  By Riemann--Roch, we have
  \[ h^{0}(\widetilde M_2) = p_a-d - 1 + h^0(\widetilde M_1).\]
  Since $d \leq p_a-1$ and $h^0(\widetilde M_1)\geq r+1$, we have $h^0(\widetilde M_2) \geq r+1$.
  
  Set $\widetilde{M}_2|_C = M_2$.
  Let the conormal bundle of $\widetilde{C}$ be $L$.
  We have the exact sequence
 \[0 \to M_2 \otimes L \to \widetilde{M}_2 \to M_2 \to 0,\]
 and hence
\begin{equation*}
    \deg(M_2 \otimes L) = \displaystyle\frac{1}{2}\deg(\widetilde{M}_2)+\deg(L) = \displaystyle\frac{1}{2}(2p_a-2-d)-(p_a-2g+1) = 2g-2-\displaystyle\frac{d}{2}.
\end{equation*}
If $d > 4g-4$, then every section of $H^0(\widetilde M_2)$ is Cartier.
Assume that $d \leq 4g-4$.
Then \[r = \displaystyle\frac{1}{2}(d-c) \leq 2g-2.\]
Multiplication by a Cartier section of $\widetilde M_1$ gives an injection \[\widetilde M_2 \hookrightarrow K_{\widetilde C}.\]
Recall the short exact sequence
\[0 \to H^0(K_C) \to H^0(K_{\widetilde{C}}) \to H^0(K_C \otimes L^{-1}).\]
The inclusion $\widetilde M_2 \hookrightarrow K_{\widetilde C}$ induces an injection \[H^0(M_2 \otimes L) \to H^0(K_C).\]
In particular, $h^0(M_2 \otimes L) \leq g$.
On the other hand, we know that \[h^0(\widetilde M_2) = p_a - d -1+ h^0(\widetilde M_1) \geq p_a - d + r.\]
From $p_a > 3g-2 + c$ and $d \leq 4g-4$, it follows that $p_a-d+r > g$ and hence $h^0(\widetilde M_2) > h^0(M_2 \otimes L)$.
So there exists a Cartier section of $\widetilde M_2$.
\end{proof}

We combine \Cref{GL vanishing 1} and \Cref{Cartier divisor in residual} to obtain the following non-vanishing result.
\begin{theorem}\label{lin and res}
  Let $\widetilde{C}$ be a ribbon of arithmetic genus $p_a$ on a smooth curve $C$ of genus $g$ and linear series Clifford index $\LCliff(\widetilde C)$.
  If $p_a > 3g-2+\LCliff(\widetilde C)$, then 
  $\RCliff(\widetilde C) \leq \LCliff(\widetilde C)$, that is,
  \[ K_{p_a-2-\LCliff(\widetilde C),1}(\widetilde{C}, K_{\widetilde{C}}) \neq 0.\]
  In particular, the non-vanishing holds if $p_a > 6g-5$.
\end{theorem}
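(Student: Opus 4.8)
The plan is to combine the two preceding results mechanically. Let $c = \LCliff(\widetilde C)$. By definition of the linear series Clifford index, there is a generalized linear series $g^r_d$ on $\widetilde C$ computing it, so $d - 2r = c$, with $r \geq 1$ and $d \leq p_a - 1$. First I would reduce to the case of an honest line bundle: if $\widetilde M$ is the generalized line bundle underlying the $g^r_d$, then by \cite[Theorem~1.1]{EG95} it is the push-forward of a line bundle $\widetilde M_1$ on the blow-up $\widetilde C'$ of $\widetilde C$ along a divisor $\beta \subset C$; one checks that the $(r+1)$-dimensional space $V$ pulls back to sections of $\widetilde M_1$ on $\widetilde C'$, and that passing to $\widetilde C'$ only decreases the degree, so we may as well assume $\widetilde M_1$ is a genuine line bundle on $\widetilde C$ with $h^0(\widetilde M_1) \geq r+1$, $\deg \widetilde M_1 = d' \leq d$, and $d' - 2r \leq c$. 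After possibly replacing $V$ by a subspace and $\widetilde M_1$ by $\widetilde M_1(-\text{base locus})$, I would further arrange that $\widetilde M_1$ is base point free while keeping $h^0 \geq r+1$ and the Clifford index bounded by $c$; subtracting base points only lowers the degree. (A small point to watch: base points of a line bundle on a ribbon — the divisorial ones are Cartier, and removing them preserves the existence of a Cartier section, which we need below.)

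Next I would apply \Cref{Cartier divisor in residual} to $\widetilde M_1$. Its hypotheses are $\deg \widetilde M_1 \leq p_a - 1$ (true since $d' \leq d \leq p_a - 1$), that $H^0(\widetilde M_1)$ contains a Cartier section (true: $\widetilde M_1$ is base point free, hence its general section is Cartier — a section whose zero locus contains all of $C$ would force every section to vanish on $C$, contradicting base point freeness), and the numerical condition $p_a > 3g - 2 + (d' - 2r)$. Since $d' - 2r \leq c$, the hypothesis $p_a > 3g - 2 + c$ guarantees this. The conclusion gives $h^0(\widetilde M_2) \geq r+1$ and that $\widetilde M_2 := K_{\widetilde C} \otimes \widetilde M_1^{-1}$ contains a Cartier section, in particular the general element of $H^0(\widetilde M_2)$ has zero-dimensional vanishing locus (a section with a Cartier zero locus has finite vanishing locus, and Cartier-ness is an open condition once one such exists).

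Now I would feed $\widetilde M_1$ and $\widetilde M_2$ into \Cref{GL vanishing 1}. Set $r_1 + 1 = h^0(\widetilde M_1) \geq r+1$ and $r_2 + 1 = h^0(\widetilde M_2) \geq r+1$, so $r_1, r_2 \geq r \geq 1$; hypothesis (1) ($\widetilde M_1$ base point free) and hypothesis (2) (general section of $\widetilde M_2$ has zero-dimensional zero locus) both hold. The theorem yields $K_{r_1 + r_2 - 1, 1}(\widetilde C, K_{\widetilde C}) \neq 0$. Finally I would translate this into the resolution Clifford index. By Riemann–Roch on the ribbon, $h^0(\widetilde M_1) + h^0(\widetilde M_2) = p_a - 1 + \deg\widetilde M_1 - \deg\widetilde M_1 \cdots$ — more precisely, $h^0(\widetilde M_2) = p_a - 1 - \deg\widetilde M_1 + h^0(\widetilde M_1)$ as used in \Cref{Cartier divisor in residual} — so $r_1 + r_2 = h^0(\widetilde M_1) + h^0(\widetilde M_2) - 2 = p_a - 3 - \deg\widetilde M_1 + 2h^0(\widetilde M_1) \geq p_a - 3 - d' + 2(r+1) = p_a - 1 - (d' - 2r) \geq p_a - 1 - c$. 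Hence $r_1 + r_2 - 1 \geq p_a - 2 - c$, and since the $K_{p,2}$ (equivalently, by duality, $K_{p_a - 2 - p, 1}$) vanishing propagates — if $K_{p,2}(\widetilde C, K_{\widetilde C}) \neq 0$ then $K_{p', 2} \neq 0$ for all $p' \leq p$, equivalently nonvanishing of $K_{q,1}(\widetilde C, K_{\widetilde C})$ persists as $q$ decreases from $p_a - 2 - p$ — wait: the correct monotonicity is that $K_{p,2} = 0 \Rightarrow K_{p',2} = 0$ for $p' \geq p$, so nonvanishing at $r_1 + r_2 - 1$ gives nonvanishing at every smaller index, in particular at $p_a - 2 - c$. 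By duality this is $K_{c, 1}$-type statement matching the definition; concretely $\RCliff(\widetilde C) \leq p_a - 2 - (r_1 + r_2 - 1) \leq c$. The main obstacle I anticipate is entirely in Step 1 — the bookkeeping of reducing a generalized linear series with possibly non-reduced or $C$-supported base behavior to a base-point-free honest line bundle on the ribbon (or its blow-up) without losing the Clifford bound or the Cartier section; once that normalization is in place the rest is a direct chaining of \Cref{Cartier divisor in residual} and \Cref{GL vanishing 1} together with Riemann–Roch. The last sentence ($p_a > 6g - 5$) is immediate: $c = \LCliff(\widetilde C) \leq \lfloor (p_a - 1)/2 \rfloor \leq (p_a-1)/2$, so $p_a > 6g - 5$ gives $p_a > 3g - 2 + (p_a - 1)/2 \geq 3g - 2 + c$.
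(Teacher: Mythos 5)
Your plan matches the paper's strategy: reduce to an honest line bundle, apply \Cref{Cartier divisor in residual} to produce a Cartier section of the residual, feed both into \Cref{GL vanishing 1}, and translate via Riemann--Roch and duality. But there is a genuine gap in the reduction step that you flag as "bookkeeping" and then effectively wave away.

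The reduction to an honest line bundle does \emph{not} land you on $\widetilde C$. The generalized line bundle $\widetilde M$ is the push-forward of a line bundle $\widetilde M_1'$ from the blow-up $\widetilde C'$ of $\widetilde C$ along a divisor $\beta \subset C$, and $\widetilde C'$ is a genuinely different ribbon: its arithmetic genus is $p_a - \deg\beta$ and its conormal bundle is $L(\beta)$, so its canonical embedding is different. Removing base points, or replacing $\widetilde M_1$ by the subsheaf generated by $V$ as in \cite[Lemma~1.3]{EG95}, still leaves you with a line bundle on $\widetilde C'$, not on $\widetilde C$. When you then chain \Cref{Cartier divisor in residual} and \Cref{GL vanishing 1}, the output is $K_{r_1+r_2-1,1}(\widetilde C', K_{\widetilde C'}) \neq 0$, i.e.\ a statement about $\widetilde C'$. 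Writing "we may as well assume $\widetilde M_1$ is a genuine line bundle on $\widetilde C$" silently replaces $\widetilde C$ by $\widetilde C'$, and at the end you have proved $\RCliff(\widetilde C') \leq c'$, not $\RCliff(\widetilde C) \leq c$. The missing ingredient is a comparison of Koszul cohomology under a simple blow-up: the paper invokes \cite[Lemma~1]{D18}, which lets one pass from $K_{p,1}(\widetilde C', K_{\widetilde C'}) \neq 0$ to $K_{p,1}(\widetilde C, K_{\widetilde C}) \neq 0$. Without some version of that lemma, your argument does not close.

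Two minor points. First, the Riemann--Roch computation must use the arithmetic genus $p_a'$ of $\widetilde C'$, not $p_a$; it happens that the genus and the degree both drop by $\deg\beta$ in passing to $\widetilde C'$, so the lower bound $r_1 + r_2 - 1 \geq p_a - c - 2$ comes out the same, but the expression "$h^0(\widetilde M_2) = p_a - 1 - \deg\widetilde M_1 + h^0(\widetilde M_1)$" as written is the formula on $\widetilde C$ and is not the one you are actually applying. Second, the duality/monotonicity translation at the end can be stated more cleanly without invoking propagation at all: $K_{N,1}(\widetilde C', K_{\widetilde C'}) \neq 0$ with $N = r_1+r_2-1$ gives by duality $K_{p_a' - 2 - N,2}(\widetilde C', K_{\widetilde C'}) \neq 0$, hence $\RCliff(\widetilde C') \leq p_a' - 2 - N \leq c'$; the whole issue is that this is a bound on $\widetilde C'$, and that is what the final blow-up lemma must convert.
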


\begin{proof}
  Let $L$ be the conormal bundle of $\widetilde{C}$.
  Since $p_a > 2g-1$, we have $\deg L = -(p_a-2g+1) < 0$, so $h^0(\mathcal{O}_{\widetilde{C}}) = 1$.

  Let $g^r_d$ be a generalized linear series on $\widetilde{C}$ with $d \leq p_a-1$ and $r \geq 1$ and $d-2r = c$.
  Recall that this is a pair $(V, \widetilde M_1)$ where $\widetilde{M}_1$ is a generalized line bundle of degree $d$ and $V \subset H^0(\widetilde{M}_1)$ is an $(r+1)$ dimensional vector subspace of global sections which injects into $H^0(M_1)$.
  We may assume that $\widetilde{M}_1$ is globally generated by $V$.
  Otherwise, we simply replace $\widetilde{M}_1$ by the subsheaf generated by $V$ (see \cite[Lemma~1.3]{EG95}).

  By \cite[Theorem~1.1]{EG95}, there exists a divisor $\beta \subset C$ of degree $b$, such that after blowing up $\widetilde{C}$ along $\beta$ we have a line bundle $\widetilde{M}_1'$ of degree $d' = d-b$ whose pushforward is $\widetilde{M}_1$. The blown up ribbon $\widetilde{C}'$ has conormal bundle $L(\beta)$ and arithmetic genus $p_a' = p_a-b$.
  Since $H^0(\widetilde{M}_1') = H^0(\widetilde{M}_1)$, we may treat $V$ as a subspace of $H^0(\widetilde{M}'_1)$.
  Since $d \leq p_a-1$, we also have $d' \leq p_a'-1$.
  Since $H^0(\widetilde{M}_1')$ contains $V$ of dimension $r+1$, we have $h^0(\widetilde M_1') \geq r+1$.
  Since $V \subset H^0(\widetilde M_1)$ generates $\widetilde M_1$, it follows that $V \subset H^0(\widetilde M_1')$ generates $\widetilde M_1'$.

  Set $c' = d'-2r = c-b$.
  Since $p_a > 3g-2+c$, we have $p_a' > 3g-2+c'$.
  Set $\widetilde M_2' = K_{\widetilde C'} \otimes (\widetilde{M}_1')^{-1}$. 
  By \Cref{Cartier divisor in residual}, $H^0(\widetilde M_2')$ contains a Cartier section.
  Write $h^0(\widetilde{M}'_1) = r+1+a$ for some $a \geq 0$.
  By Riemann-Roch, we have $h^0(\widetilde{M}_2') = p_a'-d'+r+a$.
  By \Cref{GL vanishing 1}, we get
  \begin{equation}\label{van 1}
    K_{p_a'-d'+2r-2+2a, 1}(\widetilde{C}', K_{\widetilde{C}'}) \neq 0.
  \end{equation}
  Since $a \geq 0$, we have \[p'_a - d'+2r+2a \geq p'_a-d'+2r-2 = p'_a-c'-2 = p_a-c-2.\]
  So from \eqref{van 1}, we conclude
  \[K_{p_a-c-2, 1}(\widetilde{C}', K_{\widetilde{C}'}) \neq 0.\]
   Then by \cite[Lemma~1]{D18}, we deduce
  \[K_{p_a-c-2, 1}(\widetilde{C}, K_{\widetilde{C}}) \neq 0.\]
  The proof is now complete.
\end{proof}

\section{Blow-up index stratification of the space of ribbons}\label{sec:blowup}
Recall that a \emph{split} ribbon $\widetilde C$ on $C$ is one that admits a retraction map $\widetilde C \to C$.
Every ribbon admits a blow-up that is a split ribbon.
The \emph{blow up index} of a ribbon is the minimum number of simple blow-ups necessary to make the ribbon split.
In this section, we study the set of ribbons of a given blow-up index, its relationship with a secant variety, and with the linear series Clifford index.

\subsection{Blow-up index of a ribbon as a pushout and relations to secant variety}
Fix a smooth  curve $C$.
A ribbon $\widetilde{C}$ on $C$ with conormal bundle $L$ gives an extension
\[ 0 \to L \to \Omega_{\widetilde{C}}|_C \to K_C \to 0.\]
Conversely, given an extension
\[ 0 \to L \to E \to K_C \to 0,\]
there is a unique ribbon $\widetilde C$ on $C$ with an isomorphism $E \cong \Omega_{\widetilde C}|_{C}$ compatible with the extension.
Therefore, the space of ribbons on $C$ with conormal bundle $L$ is identified with $\mathbb{P} \Ext^1(\Omega_C, L)$ (see \cite[Theorem~1.2]{BE95}).

Fix a ribbon $\widetilde C$.
Given a divisor $\beta \subset C$, we have the map $L \to L(\beta)$. 
Construct the push-out diagram
\[
\begin{tikzcd}
    0 \arrow[r] & L \arrow[r] \arrow[d] & \Omega_{\widetilde{C}}|_C \arrow[r] \arrow[d] & K_C \arrow[r] \arrow[equal,d] & 0 \\
    0 \arrow[r] & L(\beta) \arrow[r] & E \arrow[r] & K_C \arrow[r] & 0.
  \end{tikzcd}
\]
The extension in the second row corresponds to a ribbon $\widetilde{C}'$ with conormal bundle $L(\beta)$.
By \cite[Theorem~1.9]{BE95}, the ribbon $\widetilde{C}'$ is precisely the blow-up of $\widetilde{C}$ along the Weil divisor $\beta$ of $\widetilde{C}$.
Let $e$ be the class of the extension in the first row and $e'$ the extension in the second row.
Let
\begin{equation}\label{pushout}
  \mathbb{P}\Ext^1(K_{C}, L) \to \mathbb{P} \Ext^1(K_C, L(\beta))
\end{equation}
be the map induced by $L \to L(\beta)$.
Then $e'$ is the image of $e$.
If $\beta$ is of sufficiently large degree, then the group on the right vanishes, and hence $e'$ vanishes.
Then $\widetilde C'$ is the split ribbon.

\begin{definition}\label{blow-up index}
  Let $\widetilde{C}$ be a ribbon on a smooth projective curve $C$.
  The \emph{blow-up index} $b(\widetilde{C})$ of $\widetilde C$ is the smallest $b$ such that there exists a divisor $\beta$ on $C$ of degree $b$ such that the blow-up of $\widetilde C$ along $\beta$ is split.
\end{definition}

Let $e \in \mathbb P \Ext^1(K_C, L)$ be the extension class corresponding to $\widetilde C$.
Then $b(\widetilde C)$ is the smallest such that there exists an effective divisor $\beta \subset C$ such that the image of $e$ in $\mathbb P \Ext^1(K_C, L(\beta))$ vanishes.

Recall that $C$ is a smooth  curve.
By Serre duality, we have the identification
\[ \Ext^1(\Omega_C, L) = H^1(L\otimes K_C^{-1}) = H^0(K_C^{2} \otimes L^{-1})^{*}.\]
\begin{proposition}\label{blow-up index and secant variety}
  Let $L$ be a line bundle of negative degree on $C$.
  Let $i \colon C \hookrightarrow \mathbb{P}H^0(K^{2}_C\otimes L^{-1})^*$ be the embedding given by the complete linear series of the very ample line bundle $K_C^2 \otimes L^{-1}$.
  Then the ribbons $\widetilde{C}$ on $C$ with conormal bundle $L$ and $b(\widetilde{C}) \leq k$ correspond to the points of the $k$-secant variety of the embedding $i$. 
\end{proposition}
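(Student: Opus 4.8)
The plan is to unwind the pushout description of the blow-up index given just above the statement. By \Cref{blow-up index}, a ribbon $\widetilde C$ with conormal bundle $L$ and extension class $e \in \mathbb P\Ext^1(K_C, L)$ has $b(\widetilde C) \leq k$ precisely when there is an effective divisor $\beta \subset C$ of degree $k$ such that the image of $e$ under the natural map $\mathbb P\Ext^1(K_C, L) \to \mathbb P\Ext^1(K_C, L(\beta))$ induced by $L \hookrightarrow L(\beta)$ is zero. So the whole statement reduces to identifying, under the Serre duality isomorphism $\Ext^1(K_C, L) = H^0(K_C^2 \otimes L^{-1})^*$, the kernel of the map $\Ext^1(K_C,L) \to \Ext^1(K_C, L(\beta))$ with the linear span of $i(\beta)$ (or rather its annihilator, once one passes to duals), and then noting that a point of $\mathbb P H^0(K_C^2\otimes L^{-1})^*$ lies in the $k$-secant variety iff it lies on some secant $(k-1)$-plane spanned by a length-$k$ divisor on $C$.

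First I would set up the duality compatibly. Tensoring $0 \to L \to L(\beta) \to L(\beta)|_\beta \to 0$ with $K_C^{-1}$ and taking cohomology, the map $H^1(L\otimes K_C^{-1}) \to H^1(L(\beta)\otimes K_C^{-1})$ has kernel equal to the image of $H^0(L(\beta)\otimes K_C^{-1}|_\beta)$; dualizing (Serre duality) this says that the kernel of $\Ext^1(K_C,L)\to\Ext^1(K_C,L(\beta))$ is the annihilator, inside $H^0(K_C^2\otimes L^{-1})^*$, of the image of the restriction map $H^0(K_C^2\otimes L^{-1}) \to H^0(K_C^2\otimes L^{-1}|_\beta)$. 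For $\beta$ of degree $k \leq h^0(K_C^2\otimes L^{-1})$ in general position this image is $k$-dimensional, so the kernel is a $k$-codimensional subspace of the dual, i.e. a linear subspace of $\mathbb P H^0(K_C^2\otimes L^{-1})^*$ of dimension $h^0(K_C^2\otimes L^{-1}) - 1 - k$. The key point is that this subspace is exactly the span of the image $i(\beta) \subset \mathbb P H^0(K_C^2\otimes L^{-1})^*$: a functional on $H^0(K_C^2\otimes L^{-1})$ annihilates every section vanishing on $\beta$ iff it lies in the span of the evaluation functionals at the points of $\beta$, which is the definition of the secant span $\langle i(\beta)\rangle$. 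Here one uses that $K_C^2 \otimes L^{-1}$ is very ample (which holds since $\deg L < 0$ gives $\deg(K_C^2\otimes L^{-1}) \geq 4g-2 + 1 > 2g+1$, or cite it as already established), so that $i$ is an embedding and length-$k$ subschemes impose independent conditions / span honest $(k-1)$-planes when $k$ is small enough.

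Putting these together: $b(\widetilde C) \leq k$ iff $e$ lies in the kernel of $\Ext^1(K_C,L)\to\Ext^1(K_C,L(\beta))$ for some effective $\beta$ of degree $\leq k$, iff the point $[e] \in \mathbb P H^0(K_C^2\otimes L^{-1})^*$ lies in the linear span $\langle i(\beta)\rangle$ for some such $\beta$, iff $[e]$ lies on a secant plane of $i(C)$ spanned by a divisor of length $\leq k$ — which is precisely the statement that $[e] \in \Sec_k(C)$. I would be slightly careful about degenerate $\beta$ (where points collide or sections fail to impose independent conditions): the secant variety is by definition the closure of the locus of honest $(k-1)$-secant planes, and the pushout construction is continuous in $\beta$, so both descriptions pass to the same closure; alternatively one checks directly that the kernel condition is closed and contains the generic locus.

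The main obstacle I anticipate is purely bookkeeping rather than conceptual: pinning down the duality signs/identifications so that "kernel of the pushout map" literally matches "annihilator of sections vanishing on $\beta$" matches "secant span", and handling the edge cases where $\deg\beta$ exceeds $h^0(K_C^2\otimes L^{-1})$ or where $\beta$ is non-reduced (so that one must interpret $i(\beta)$ and its span scheme-theoretically, via the $k$-th secant variety as the closure of the span of general reduced divisors). None of this should require more than the cohomology-and-base-change and very-ampleness facts already available, together with the pushout/blow-up description recalled immediately before the proposition.
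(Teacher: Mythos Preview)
Your approach is essentially identical to the paper's: identify the pushout map $\Ext^1(K_C,L)\to\Ext^1(K_C,L(\beta))$ via Serre duality with the dual of the inclusion $H^0(K_C^2\otimes L^{-1}(-\beta))\hookrightarrow H^0(K_C^2\otimes L^{-1})$, so that $e$ dies under pushout iff $e$ annihilates all sections vanishing on $\beta$, which is exactly the condition that $[e]$ lies in the span of $i(\beta)$. One slip to fix: in your second paragraph you write ``annihilator of the image of the restriction map'' and then compute codimension $k$, but you mean the annihilator of the \emph{kernel} of restriction (i.e.\ of $H^0(K_C^2\otimes L^{-1}(-\beta))$), which is a $k$-dimensional subspace of the dual and hence a $(k-1)$-plane in projective space---you state this correctly in the very next sentence, so it is only a local wording error.
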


\begin{proof}
  Let $\widetilde{C}$ be a ribbon on $C$ with conormal bundle $L$ and extension class
  \[e \in H^0(K_C^{2}\otimes L^{-1})^{*} = H^{1}(L \otimes K_C^{-1}).\]
  Let $\beta \subset C$ be an effective divisor.
  Consider the map
\[
    H^1(L \otimes K_C^{-1}) \xrightarrow{f} H^1(L(\beta) \otimes K^{-1}_C).
  \]
The map above is Serre dual to the map
\[
  H^0(K^{2}_C\otimes L^{-1})^* \to H^0(K^{2}_C \otimes L^{-1}(-\beta))^*,
\]
which in turn is dual to the map
\[
  H^0(K_C^2 \otimes L^{-1}(-\beta)) \xrightarrow{g} H^{0}(K_C^2\otimes L^{-1})
\]
induced by the inclusion $\mathcal O(-\beta) \to \mathcal O$.
Observe that $e$ lies in the kernel of $f$ if and only if the composite
\[  H^0(K_C^2 \otimes L^{-1}(-\beta)) \xrightarrow{g} H^{0}(K_C^2\otimes L^{-1}) \xrightarrow{e} \mathbb k\]
vanishes.
But the points $\lambda \in \mathbb P H^0(K^2 \otimes L^{-1})$ such that the composite $\lambda \circ g$ vanishes are precisely the points that lie on the span of $\beta$ in the embedding $i$.
It follows that the ribbons $\widetilde C$ of blow-up index at most $k$ correspond the points lying on the span of an effective divisor of degree at most $k$, which is the $k$-secant variety of $C$.
\end{proof}

\begin{corollary}\label{Using secant varieties to compute blow-up index}
  Let \(C\) be a smooth curve of genus \(g\).
  Fix \(p_a > 2g-1\) and let \(L\) be a line bundle on \(C\) of degree \(-p_a+2g-1\).
  Let $\widetilde{C}$ be a ribbon of arithmetic genus with conormal bundle \(L\).
  Then
\begin{enumerate}
    \item $0 \leq b(\widetilde{C}) \leq \lceil (p_a+g-2)/2 \rceil$
    \item If $\widetilde{C} \in \mathbb P \Ext^1(K_C, L)$ is general, then $b(\widetilde{C}) = \lceil (p_a+g-2)/2 \rceil$ 
\end{enumerate}
\end{corollary}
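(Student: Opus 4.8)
I would translate the statement into a dimension count for secant varieties of $C$ via \Cref{blow-up index and secant variety}. Write $D := K_C^{2}\otimes L^{-1}$, so that $\mathbb P\Ext^1(K_C,L)=\mathbb P H^0(D)^{*}$ is the projective space in which $C$ is embedded. From $\deg L = 2g-1-p_a$ I get $\deg D = p_a+2g-3\ge 2g-1$ (using $p_a\ge 2$), hence $h^1(D)=0$ and, by Riemann--Roch, $h^0(D)=p_a+g-2$. So the ambient space is $\mathbb P^{N}$ with $N=p_a+g-3$, and the claimed value is $k_0:=\lceil (p_a+g-2)/2\rceil=\lceil (N+1)/2\rceil$. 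By \Cref{blow-up index and secant variety}, a ribbon $\widetilde C$ with extension class $e$ satisfies $b(\widetilde C)\le k$ if and only if $e\in\Sec_k(C)\subseteq\mathbb P^{N}$. Thus the nontrivial part of (1) is equivalent to $\Sec_{k_0}(C)=\mathbb P^{N}$, and, granting (1), the equality $b(\widetilde C)=k_0$ for a general $\widetilde C$ is equivalent to $\Sec_{k_0-1}(C)\subsetneq\mathbb P^{N}$.

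For the second assertion I would use the elementary bound on the dimension of a secant variety of a curve: the incidence variety of pairs $(\xi,z)$ with $\xi\in\Sym^{j}C$ and $z$ in the linear span $\langle\xi\rangle$ maps onto $\Sec_j(C)$ and has dimension at most $j+(j-1)=2j-1$, so $\dim\Sec_j(C)\le 2j-1$. Taking $j=k_0-1$ and distinguishing the two parities of $p_a+g-2$ (so that $2k_0$ equals $p_a+g-2$ or $p_a+g-1$), I get $\dim\Sec_{k_0-1}(C)\le 2k_0-3\le p_a+g-4<N$. Hence $\Sec_{k_0-1}(C)$ is a proper closed subvariety of $\mathbb P^{N}$ and a general $e$ avoids it, giving $b(\widetilde C)\ge k_0$; together with (1) this gives equality. (In the degenerate range where $k_0=1$, which occurs only for $g=1$ and $p_a\in\{2,3\}$, one argues instead that a general, hence non-split, ribbon automatically has $b\ge 1$.)

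The main content is part (1), i.e.\ the surjectivity $\Sec_{k_0}(C)=\mathbb P^{N}$. Since $\Sec_{k_0}(C)$ is irreducible and closed in $\mathbb P^{N}$, it is enough to show $\dim\Sec_{k_0}(C)=N$. By Terracini's lemma (valid in characteristic zero), $\dim\Sec_{k_0}(C)=\dim\langle T_{x_1}C,\dots,T_{x_{k_0}}C\rangle$ for general $x_1,\dots,x_{k_0}\in C$, and for general $x_i$ the right-hand side equals the linear span $\langle Z\rangle$ of the length-$2k_0$ subscheme $Z=2x_1+\cdots+2x_{k_0}$, which has dimension $N-h^0(D(-Z))$. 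So I must prove $h^0(D(-Z))=0$ for general $x_i$. Here $\deg(D-Z)=p_a+2g-3-2k_0$ equals $g-1$ when $p_a+g-2$ is even and $g-2$ when it is odd. The key observation is that $k_0\ge g$ (which follows from $p_a\ge 2g$), so $\Sym^{k_0}C\to\operatorname{Pic}^{k_0}C$ is surjective; composing with the surjective isogeny $[2]\colon\operatorname{Pic}^{k_0}C\to\operatorname{Pic}^{2k_0}C$, $M\mapsto M^{\otimes 2}$, shows that $D(-2x_1-\cdots-2x_{k_0})$ runs over all of $\operatorname{Pic}^{g-1}C$ (resp.\ $\operatorname{Pic}^{g-2}C$) as the $x_i$ vary. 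Since in each degree $\le g-1$ the locus of effective classes is a proper closed subvariety (the theta divisor, or something smaller, or empty), a general choice of $x_i$ forces $h^0(D(-Z))=0$, whence $\dim\Sec_{k_0}(C)=N$.

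I expect the only delicate points to be bookkeeping: verifying that $k_0\ge g$ under the hypothesis $p_a>2g-1$ (so that $\Sym^{k_0}C\to\operatorname{Pic}^{k_0}C$ is onto), tracking the two parity cases in the degree computations, and disposing of the low-genus exceptions ($g=1$, where the ``theta divisor'' in $\operatorname{Pic}^{0}C$ is a single point and the degree-$(g-2)$ bundle has negative degree, and the possibility $N=0$). Everything else reduces to Terracini's lemma together with the surjectivity of multiplication by $2$ on the Jacobian.
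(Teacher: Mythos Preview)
Your argument is correct.  The paper's proof, however, is a one-line citation: it simply invokes Lange~\cite{Lan84} for the fact that the $k$-th secant variety of any curve in $\mathbb{P}^N$ has dimension exactly $\min(2k-1,N)$, and then both statements follow immediately from \Cref{blow-up index and secant variety}.  So where the paper quotes a known non-degeneracy theorem for secant varieties of curves, you have essentially re-proved the relevant instance of that theorem by hand.

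Your route splits the two inequalities and treats them with different tools: the easy upper bound $\dim\Sec_j(C)\le 2j-1$ via the incidence correspondence handles part~(2), while for part~(1) you use Terracini's lemma and reduce the filling statement $\Sec_{k_0}(C)=\mathbb{P}^N$ to showing that a general line bundle of degree $g-1$ or $g-2$ has no sections, which you then deduce from surjectivity of the squaring map on the Jacobian together with $k_0\ge g$.  This is a genuinely self-contained argument and makes transparent exactly which features of curves (as opposed to higher-dimensional varieties) force secant varieties to have the expected dimension; the paper's approach is much shorter but hides this mechanism inside the reference.  Your bookkeeping (parity cases, the check $k_0\ge g$, the degenerate case $k_0=1$) is all in order.
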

\begin{proof}
  The $k-$th secant variety of a curve $C \hookrightarrow \mathbb{P}^N$ is of expected dimension $\min(2k-1, N)$ (see \cite{Lan84}).
  So the result follows from \Cref{blow-up index and secant variety}.
\end{proof}

\subsection{Blow-up index as pullback and relations to stability of $\Omega_{\widetilde{C}|_C}$}
There is another interpretation of the blow-up index, which is useful for measuring gonality.
As usual, let $\widetilde{C}$ be a ribbon on $C$ with conormal bundle $L$ defined by an extension $$ 0 \to L \to \Omega_{\widetilde{C}}|_C \to K_C \to 0.$$
An effective divisor \(\beta \subset C\) gives a map of line bundle \(K_C(-\beta) \to K_{C}\).
This map yields the pull-back diagram
\begin{equation}\label{pullback}
\begin{tikzcd}
    0 \arrow[r] & L \arrow[r] \arrow[d] & E \arrow[r] \arrow[d] & K_C(-\beta) \arrow[r] \arrow[d] & 0 \\
    0 \arrow[r] & L \arrow[r]  & \Omega_{\widetilde{C}}|_C \arrow[r]  & K_C \arrow[r]  & 0.
\end{tikzcd}
\end{equation}
Denoting by \(e \in \Ext^1(K_C,L)\) the class of the extension in the bottom row, the class of the extension in the top row is simply the image of \(e\) under the map
\begin{equation}\label{eq:pullback}
  \Ext^1(K_C, L) \to \Ext^1(K_C(-\beta), L).
\end{equation}

The observation leads to the following.
\begin{proposition}\label{equivalent definitions of blow-up index}
  Let \(C\) be a smooth curve and \(L\) a line bundle on \(C\).
  Let \(\widetilde C\) be a ribbon on \(C\) of arithmetic genus with conormal bundle \(L\).
  Let \(e \in \Ext^1(K_C,L)\) be the extension class of
  \[ 0 \to L \to \Omega_{\widetilde C}|_C \to K_C \to 0.\]
  The blow-up index of \(\widetilde C\) is any of the following equal quantities:
  \begin{enumerate}
  \item the minimum \(b\) such that there exists an effective divisor \(\beta\) on \(C\) of degree \(b\) such that the image of \(e\) under the push-out by \(L \to L(\beta)\) vanishes,
  \item the minimum \(b\) such that there exists an effective divisor \(\beta\) on \(C\) of degree \(b\) such that the image of \(e\) under the pull-back by \(K_C(-\beta) \to K_{C}\) vanishes.
  \end{enumerate}
  If $p_a > g+1$, then it is also equal to \(2g-2-k\) where \(k\) is the maximum such that \(\Omega_{\widetilde C}|_C\) has a sub line-bundle of degree \(k\).
\end{proposition}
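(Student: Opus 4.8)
The plan is to prove the statement in three stages: first note that $b(\widetilde C)$ equals quantity $(1)$ essentially by definition, then identify $(1)$ with $(2)$ by Serre duality, and finally compute the maximal sub-line-bundle degree of $\Omega_{\widetilde C}|_C$. That $b(\widetilde C)$ equals $(1)$ is already recorded in the discussion preceding this proposition: blowing up $\widetilde C$ along an effective $\beta$ produces, by \cite[Theorem~1.9]{BE95}, the ribbon whose extension class is the push-out of $e$ along $L \to L(\beta)$, and that ribbon is split exactly when the push-out vanishes.

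For $(1) = (2)$ I would pass to Serre duality, $\Ext^1(\mathcal F,\mathcal G) \cong \Hom(\mathcal G, \mathcal F\otimes K_C)^{\vee}$. Under this identification, the push-out map $\Ext^1(K_C, L)\to \Ext^1(K_C, L(\beta))$ is the transpose of the natural inclusion $\Hom(L(\beta), K_C^2) \hookrightarrow \Hom(L, K_C^2)$, while the pull-back map $\Ext^1(K_C, L)\to\Ext^1(K_C(-\beta), L)$ is the transpose of $\Hom(L, K_C(-\beta)\otimes K_C)\hookrightarrow\Hom(L, K_C\otimes K_C)$; but both of these inclusions are the same inclusion $H^0(K_C^2\otimes L^{-1}(-\beta))\hookrightarrow H^0(K_C^2\otimes L^{-1})$. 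Hence the push-out and pull-back maps have the same kernel, namely the annihilator of $H^0(K_C^2\otimes L^{-1}(-\beta))$; so for each effective $\beta$, the image of $e$ dies under the push-out if and only if it dies under the pull-back, and minimizing over $\beta$ gives $(1)=(2)$. (This also reproves \Cref{blow-up index and secant variety}.) The delicate point here is keeping the Serre-duality identifications functorial, so that the two maps really are identified with one and the same linear map of vector spaces.

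For the last equality, suppose $p_a > g+1$ and set $E = \Omega_{\widetilde C}|_C$. I would show that every sub-line-bundle of $E$ has degree at most $2g-2-b(\widetilde C)$ and that one of this degree exists. Take a sub-line-bundle $N\subset E$, saturated so that $E/N$ is a line bundle, and compose with $E\twoheadrightarrow K_C$. If this composite vanishes then $N\subseteq L$ and $\deg N\le \deg L = 2g-1-p_a$. Otherwise it is a nonzero map of line bundles; letting $\beta$ be its divisor of zeros (effective), the composite is the canonical inclusion $K_C(-\beta)\hookrightarrow K_C$, $N\cong K_C(-\beta)$, and the existence of the lift $N\hookrightarrow E$ forces the obstruction, which is the image of $e$ under the pull-back \eqref{eq:pullback}, to vanish in $\Ext^1(K_C(-\beta), L)$. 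By $(2)$ this forces $\deg\beta\ge b(\widetilde C)$, hence $\deg N = 2g-2-\deg\beta\le 2g-2-b(\widetilde C)$. Moreover \Cref{Using secant varieties to compute blow-up index} together with $p_a>g+1$ gives $b(\widetilde C)\le \lceil(p_a+g-2)/2\rceil\le p_a-1$, so $2g-2-b(\widetilde C)\ge 2g-1-p_a$ and the bound $2g-2-b(\widetilde C)$ holds in both cases. Conversely, choosing (again via $(2)$) an effective $\beta$ of degree exactly $b(\widetilde C)$ with pull-back of $e$ vanishing, the vanishing obstruction produces an inclusion $K_C(-\beta)\hookrightarrow E$, that is, a sub-line-bundle of degree $2g-2-b(\widetilde C)$. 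Therefore $k = 2g-2-b(\widetilde C)$.

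I expect the main obstacle to be the bookkeeping in the second stage---verifying that push-out and pull-back become literally the same map under Serre duality---rather than anything conceptual; the rest is a routine classification of sub-line-bundles together with the numerical input of \Cref{Using secant varieties to compute blow-up index}, which is exactly where the hypothesis $p_a>g+1$ enters.
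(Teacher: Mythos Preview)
Your proposal is correct and follows essentially the same line as the paper. The one place where the paper is a bit slicker is the equality $(1)=(2)$: rather than passing through Serre duality to show the push-out and pull-back maps have the same kernel, the paper simply observes that, after identifying $\Ext^1(K_C,L(\beta))$ with $\Ext^1(K_C(-\beta),L)$ via tensoring by $\mathcal O(-\beta)$, both the push-out by $L\to L(\beta)$ and the pull-back by $K_C(-\beta)\to K_C$ are induced by the single sheaf map $K_C^{-1}\otimes L\to K_C^{-1}\otimes L(\beta)$, hence are literally the same map on $\Ext^1$. Your Serre-duality argument reaches the same conclusion and is fine, just slightly more roundabout. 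For the final part you treat the two cases (composite $N\to K_C$ zero or nonzero) separately, while the paper uses the strict inequality $2g-2-\lceil(p_a+g-2)/2\rceil>\deg L$ (valid for $p_a>g+1$) to rule out the first case for the maximal sub-line-bundle; both arguments are equivalent. Note that your inequality $\lceil(p_a+g-2)/2\rceil\le p_a-1$ is in fact strict under $p_a>g+1$, which you don't need for your version but matches the paper's.
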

\begin{proof}
  The quantity (1) is the definition of the blow-up index (\Cref{blow-up index}).
  The equality of (1) and (2) follows because the push-out \(L \to L(\beta)\) and pull-back \(K_C(-\beta) \to K_C\) induce the same map
  \[ \Ext^{1}(K_C, L) \to \Ext^1(K_C, L(\beta)) = \Ext^1(K_C(-\beta), L).\]

  We now prove the last statement.
  Note that the extension obtained by the pull-back along \(K_C(-\beta) \to K_C\) splits if and only if there is a map \(K_C(-\beta) \to \Omega_{\widetilde C}|_{C}\) such that the following diagram commutes
\[
\begin{tikzcd}
    K_C(-\beta) \arrow[d] \arrow[dr] & \\
    \Omega_{\widetilde{C}}|_C \arrow[r] & K_C 
\end{tikzcd}
\]
In this case, \(K_C(-\beta)\) is a sub-bundle of \(\Omega_{\widetilde C}|_C\) of degree \(2g-2-b\).
So \((3) \leq (2)\).
For the reverse inequality, let \(M \subset \Omega_{\widetilde C}|_{C}\) be a sub line-bundle of largest degree \(k\).
By \Cref{Using secant varieties to compute blow-up index} we know that the blow-up index of \(\widetilde C\) is at most \(\lceil 1/2(p_a+g-2)\rceil\).
So \(\Omega_{\widetilde C}|_C\) has a line sub-bundle of degree at least \(2g-2-\lceil 1/2(p_a+g-2)\rceil\).
The condition \(p_a > g+1\) ensures that
\[ 2g-2-\lceil 1/2(p_a+g-2)\rceil > \deg L = -p_a+2g-1.\]
Therefore \(\deg M = k> \deg L\), and so \(M\) does not admit a non-zero map to \(L\).
It follows that the composite \(M \to \Omega_{\widetilde C}|_C \to K_C\) is non-zero.
So \(M\) has the form \(K_C(-\beta)\) for some effective divisor \(\beta\) of degree \(2g-2-k\).
We conclude that \((2) \leq (3)\).
\end{proof}

As a corollary, we relate the blow-up index with the stability of the rank 2 bundle \(\Omega_{\widetilde C}|_{C}\).
\begin{corollary}
Let $\widetilde{C}$ be a ribbon of arithmetic genus $p_a > g+1$ on a smooth curve of genus $g$.
Then $\Omega_{\widetilde{C}}|_C$ is stable (resp. semistable) if and only if $b(\widetilde{C}) > 1/2(p_a+3)-2$ (resp. $b(\widetilde{C}) \geq 1/2(p_a+3)-2$).
\end{corollary}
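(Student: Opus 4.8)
The plan is to reduce everything to the last clause of \Cref{equivalent definitions of blow-up index}, which under the hypothesis $p_a > g+1$ identifies $b(\widetilde C)$ with $2g-2-k$, where $k$ is the maximal degree of a sub-line-bundle of $E := \Omega_{\widetilde C}|_C$. First I would recall that, since $E$ has rank $2$, it is stable (resp.\ semistable) if and only if every sub-line-bundle $M \subset E$ satisfies $\deg M < \mu(E)$ (resp.\ $\deg M \leq \mu(E)$); it suffices to test saturated rank-one subsheaves, i.e.\ sub-line-bundles, since passing to the saturation of a rank-one subsheaf only increases its degree. Hence $E$ is stable iff $k < \mu(E)$ and semistable iff $k \leq \mu(E)$.

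Next I would compute $\mu(E)$. From the defining extension $0 \to L \to E \to K_C \to 0$ we get $\deg E = \deg L + (2g-2)$, and the identity $p_a = (2g-1)-\deg L$ recorded at the start of \Cref{sec:cliff} gives $\deg L = 2g-1-p_a$. Therefore $\deg E = (2g-1-p_a)+(2g-2) = 4g-3-p_a$ and $\mu(E) = (4g-3-p_a)/2$.

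Finally I would substitute $k = 2g-2-b(\widetilde C)$ and clear denominators: the inequality $k < \mu(E)$ reads $2g-2-b(\widetilde C) < (4g-3-p_a)/2$, i.e.\ $4g-4-2b(\widetilde C) < 4g-3-p_a$, i.e.\ $p_a-1 < 2b(\widetilde C)$, i.e.\ $b(\widetilde C) > (p_a-1)/2 = \tfrac12(p_a+3)-2$. Replacing the strict inequalities by non-strict ones throughout yields the semistable case verbatim.

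I do not expect a genuine obstacle: all the real content is already packaged in \Cref{equivalent definitions of blow-up index}, whose hypothesis $p_a>g+1$ is precisely the one assumed here, and the rest is an elementary slope computation. The only point deserving a word of care is the reduction of the stability condition to sub-line-bundles (rather than arbitrary rank-one subsheaves), which is standard for bundles on a smooth curve; one should also note in passing that the maximal $k$ exists, which is in any case built into the cited proposition.
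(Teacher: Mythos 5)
Your proposal is correct and follows the same route as the paper's proof: compute $\deg(\Omega_{\widetilde C}|_C) = 4g-3-p_a$ from the defining extension, hence $\mu = \tfrac12(4g-3-p_a)$, and invoke the last clause of \Cref{equivalent definitions of blow-up index} to translate between the maximal sub-line-bundle degree $k$ and $b(\widetilde C) = 2g-2-k$. The paper leaves the final algebraic manipulation implicit; your write-up simply spells it out.
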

\begin{proof}
  Note that \[\deg(\Omega_{\widetilde{C}}|_C) = -\deg(L)+2g-2 = -(p_a-2g+1)+2g-2 = -(p_a+3)+4g.\]
  So the slope of $\Omega_{\widetilde{C}}|_C$ is $-1/2(p_a+3)+2g$.
  The statement now follows from \Cref{equivalent definitions of blow-up index}.
\end{proof}
\begin{remark}
  For ribbons of blow-up index between $-1/2(p_a+3)-2$ and the maximum, which is $\lceil 1/2(p_a+g-2)\rceil$, the bundles $\Omega_{\widetilde C}|_{C}$ are semi-stable.
  But they are still distinguished by the maximum degree of sub line-bundles.
\end{remark}

\subsection{Blow-up index, gonality, and linear series Clifford index}

We end this section with bounds on the gonality of a ribbon in terms of the blow-up index.
Recall that the gonality of $\widetilde C$ is the minimum $d$ such that there exists a generalized $g^1_d$ on $\widetilde C$.

The following gives an upper bound on gonality.
\begin{proposition}\label{blow up index and linear series Cliff of a ribbon}
  Let $C$ be a smooth curve of genus $g$ and gonality $m$.
  Let $\widetilde{C}$ be a ribbon $C$ of arithmetic genus $p_a$, blow-up index $b$, and gonality $d$. Assume there exist a smooth divisor in $|-2L|$ and that
  $p_a > 2g-1+2m$, then
\begin{equation*}
     d \leq \min(b+2m, \lfloor 1/2(p_a+3) \rfloor).
\end{equation*}
\end{proposition}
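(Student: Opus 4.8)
The plan is to prove the two bounds $d \le b + 2m$ and $d \le \lfloor (p_a+3)/2 \rfloor$ separately, the latter being the easier one.

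\textbf{The bound $d \le \lfloor (p_a+3)/2 \rfloor$.} For this I would produce a generalized $g^1_d$ on $\widetilde C$ with $d = \lfloor (p_a+3)/2 \rfloor$ directly. One route: by a Brill--Noether type count, a generalized line bundle of degree $d$ on $\widetilde C$ with $d \le (p_a+3)/2$ has at least $h^0 \ge 2$ by Riemann--Roch for generalized line bundles on ribbons (since $\chi(\widetilde M) = d - p_a + 1$ and the Serre-dual bundle $K_{\widetilde C} \otimes \widetilde M^{-1}$ has degree $2p_a - 2 - d \ge p_a - 1$, hence is ``big enough'' to force $h^0(\widetilde M) \ge 2$). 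One has to be a little careful that the two-dimensional subspace of $H^0(\widetilde M)$ injects into $H^0(M)$ — i.e. that we get an honest $g^1_d$ and not a ``spurious'' pencil supported on $C$ — but this is arranged by choosing $\widetilde M$ general, or by passing to the subsheaf generated by global sections as in \cite[Lemma~1.3]{EG95}. The hypothesis $p_a > 2g-1+2m$ guarantees we are in a stable range.

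\textbf{The bound $d \le b + 2m$.} This is the geometric heart of the statement and should use the pull-back description of the blow-up index from \Cref{equivalent definitions of blow-up index}. Start with the degree-$m$ pencil $A$ on $C$ (a $g^1_m$), so $|A|$ gives a map $\phi \colon C \to \mathbb P^1$. The idea is: blow up $\widetilde C$ to the split ribbon $\widetilde C^{\mathrm{split}}$ via a divisor $\beta$ of degree $b$; on the split ribbon, which is $\mathrm{Spec}(\mathcal O_C \oplus L(\beta))$, a line bundle pulled back along the retraction $\widetilde C^{\mathrm{split}} \to C$ together with the product structure lets us build a pencil. Concretely, I expect that $\phi^* \mathcal O(1)$, which is the line bundle $A$ on $C$, lifts to a line bundle $\widetilde A$ on the split ribbon with $H^0(\widetilde A) \twoheadrightarrow H^0(A)$, provided $H^1(A \otimes L(\beta)) = 0$; since $\deg(A \otimes L(\beta)) = m + (-p_a + 2g - 1) + b$, the condition $p_a > 2g - 1 + 2m$ together with an appropriate choice of $\beta$ (or rather, $b$ need not itself be large — one may need to enlarge $\beta$ beyond the minimal blow-up divisor, which is harmless since enlarging $\beta$ only increases the degree of the resulting pencil by the same amount) makes this vanishing hold once we twist up by a degree-$2m$-ish correction. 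The upshot is a generalized $g^1_{d'}$ on $\widetilde C^{\mathrm{split}}$, hence on $\widetilde C$ after pushing forward along the blow-up (using \cite[Lemma~1]{D18} as in the proof of \Cref{lin and res} to transport Koszul/linear-series data down the blow-up), with $d' \le b + 2m$. The factor $2m$ rather than $m$ arises because the pencil $\widetilde A$ restricted to $C$ is $A$, but the ``Cartier'' sections of $\widetilde A$ on the ribbon see $A$ twice — once on the reduced curve and once on the conormal direction — so its degree as a generalized line bundle on $\widetilde C$ is $2m$ plus the contribution $b$ of the blow-up. The existence of a smooth divisor in $|-2L|$ is what ensures the relevant pencil has zero-dimensional base locus on the ribbon, i.e.\ genuinely restricts to a pencil and is not absorbed into $C$.

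\textbf{Main obstacle.} The delicate point is controlling exactly how the $g^1_m$ on $C$ spreads out to a generalized linear series on $\widetilde C$ and pinning down its degree as $b + 2m$ and not something larger; this requires carefully tracking the blow-up/push-forward dictionary of \cite[Theorem~1.1]{EG95} and \cite[Theorem~1.9]{BE95} and checking that the resulting subspace of sections injects into $H^0$ of the restriction to $C$, so that it is a bona fide $g^1_{d'}$ in the sense required by $\LCliff$ and $\gon$. The smoothness hypothesis on $|-2L|$ and the numerical hypothesis $p_a > 2g - 1 + 2m$ should be exactly what is needed to make both of these work, and I would expect the proof to reduce, after the split-ribbon reduction, to an explicit construction of sections on $\mathrm{Spec}(\mathcal O_C \oplus L(\beta))$ together with the cohomology vanishing $H^1(C, A \otimes L(\beta)) = 0$.
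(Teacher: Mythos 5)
Your two bounds have very different fates, so let me take them separately.

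\textbf{The bound $d \le b+2m$.} You have the right germ: blow up $\widetilde C$ along a degree-$b$ divisor to reach the split ribbon, transport a pencil on $C$ up, and push it back down. But you are greatly overcomplicating the transport step. A split ribbon carries, by definition, a retraction $\widetilde C^{\mathrm{split}} \to C$, and this retraction is a \emph{finite flat map of degree~$2$}. Pulling back a $g^1_m$ on $C$ along this finite map immediately yields a $g^1_{2m}$ on $\widetilde C^{\mathrm{split}}$: the degree doubles because the map has degree $2$, and the sections pull back to an injective $2$-dimensional system with no cohomological hypothesis whatsoever. Your worries about $H^1(A\otimes L(\beta))$ vanishing and about ``Cartier sections seeing $A$ twice'' are red herrings; the paper's proof is a three-line pull-back--push-forward and needs none of it. (Also, the smooth-divisor hypothesis on $|-2L|$ plays no role in this half.)

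\textbf{The bound $d \le \lfloor (p_a+3)/2\rfloor$.} Here there is a genuine error. Your Riemann--Roch count proves the opposite of what you claim. If $d \le (p_a+3)/2$ then $\chi(\widetilde M) = d-p_a+1$ is negative for $p_a \ge 6$, so $h^1(\widetilde M) > h^0(\widetilde M)$; the Serre dual having large degree just says $h^1(\widetilde M)$ is large, it says nothing about $h^0(\widetilde M)$. For a general line bundle of this degree one expects $h^0 = 0$, and producing a $g^1_d$ is a true Brill--Noether existence problem, not something the Euler characteristic gives you. The paper sidesteps this entirely: in the given numerical range $\widetilde C$ is smoothable, a smooth curve of genus $p_a$ has gonality at most $\lfloor(p_a+3)/2\rfloor$, and the bound descends to $\widetilde C$ by the semicontinuity of gonality (\Cref{semicontinuity of gonality}). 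This is the step that uses the smooth-divisor hypothesis (needed for smoothability) and the arithmetic hypothesis on $p_a$ (needed for the semicontinuity corollary), and it is not recoverable from your sketch.
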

\begin{proof}
  Let $\widetilde{C}'$ be the split ribbon obtained by blowing up $\widetilde{C}$ along a divisor of degree $b$.
  Then we have a degree 2 map $\widetilde{C'} \to C$.
  The pull-back of a $g^{1}_m$ on $C$ yields a $g^1_{2m}$ on $\widetilde C'$.
  Its push-forward to $\widetilde C$ gives a $g^1_{b+2m}$ on $\widetilde C$.
  So $d \leq b+2m$.

  Finally, in the prescribed range, all ribbons are smoothable \cite{Gon06}.
  A smooth curve of genus $p_a$ has gonality at most $\lfloor 1/2(p_a+3)\rfloor$.
  So the bound $d \leq \lfloor 1/2(p_a+3)\rfloor$ follows from semi-continuity (\Cref{semicontinuity of gonality}).
\end{proof}

The following gives a lower bound on gonality.
\begin{proposition}\label{lower bound to gonality}
  Let $C$ be a smooth curve of genus $g$ and gonality $m$.
  Let $\widetilde{C}$ be a ribbon $C$ of arithmetic genus $p_a$, blow-up index $b$, and gonality $d$.
  Then \[d \geq b-(2g-2).\]
\end{proposition}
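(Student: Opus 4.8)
The plan is to extract from a gonality pencil on $\widetilde{C}$ a base point free pencil on a suitable blow-up $\widehat{C}\to\widetilde{C}$, and then to use the ramification divisor of the induced map $C\to\mathbb{P}^1$ to show that $\widehat{C}$ is close to being split. Concretely, I would first fix a generalized $g^1_d$ on $\widetilde{C}$ realizing the gonality, say $(V,\widetilde{M})$ with $\dim V=2$; after replacing $\widetilde{M}$ by the subsheaf generated by $V$ (which does not change the degree, since $d=\gon(\widetilde{C})$ is minimal; cf.\ \cite[Lemma~1.3]{EG95}) I may assume $\widetilde{M}$ is generated by $V$. By \cite[Theorem~1.1]{EG95} there is an effective divisor $\beta\subset C$, of degree $\ell$ say, and a line bundle $\widehat{M}$ on the blow-up $\iota\colon\widehat{C}\to\widetilde{C}$ along $\beta$, with $\widetilde{M}=\iota_{*}\widehat{M}$; as in the proof of \Cref{lin and res}, $\widehat{M}$ is again generated by $V\subset H^0(\widehat{M})=H^0(\widetilde{M})$, hence base point free, and $\deg\widehat{M}=d-\ell$. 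Since $\widehat{M}$ is a line bundle on the ribbon $\widehat{C}$, its degree is twice the degree of $M:=\widehat{M}|_{C}$ (cf.\ the computation in \Cref{Cartier divisor in residual}), so $\deg M=(d-\ell)/2$.

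Next I would analyze the morphism $\phi\colon\widehat{C}\to\mathbb{P}^1$ defined by $V$, for which $\phi^{*}\mathcal{O}(1)=\widehat{M}$. Its restriction $\bar\phi=\phi|_{C}\colon C\to\mathbb{P}^1$ has $\bar\phi^{*}\mathcal{O}(1)=M$ and is a finite separable morphism of degree $\deg M=(d-\ell)/2$; by Riemann--Hurwitz its ramification divisor $R\subset C$ satisfies $\deg R=(2g-2)+2\deg M=d+2g-2-\ell$, and fits into an exact sequence $0\to M^{-2}\xrightarrow{d\bar\phi}K_{C}\to\mathcal{O}_R\to 0$, so that $d\bar\phi$ identifies $M^{-2}$ with the sub-line-bundle $K_{C}(-R)\hookrightarrow K_{C}$. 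The crucial point is that the differential $d\phi\colon\phi^{*}\Omega_{\mathbb{P}^1}=\widehat{M}^{-2}\to\Omega_{\widehat{C}}$, restricted to $C$, yields a map $M^{-2}\to\Omega_{\widehat{C}}|_{C}$ whose composite with the canonical surjection $\Omega_{\widehat{C}}|_{C}\to K_{C}$ is exactly $d\bar\phi$ (the chain rule for $C\hookrightarrow\widehat{C}\xrightarrow{\phi}\mathbb{P}^1$). Via the identification $M^{-2}\cong K_{C}(-R)$, this is precisely a lift $\psi\colon K_{C}(-R)\to\Omega_{\widehat{C}}|_{C}$ of the inclusion $K_{C}(-R)\hookrightarrow K_{C}$.

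To conclude, I would recall from the proof of \Cref{equivalent definitions of blow-up index} (see diagram \eqref{pullback}) that such a lift $\psi$ exists if and only if the pull-back of the extension class of $\widehat{C}$ along $K_{C}(-R)\to K_{C}$ vanishes; by the equality of the map \eqref{eq:pullback} with the corresponding push-out map, this is the same as saying that the push-out of that class along $L(\beta)\hookrightarrow L(\beta+R)$ vanishes. Hence the extension class of $\widetilde{C}$ itself maps to $0$ in $\Ext^1(K_C,L(\beta+R))$, and so, by the push-out description of blow-ups (\cite[Theorem~1.9]{BE95} and \eqref{pushout}), the blow-up of $\widetilde{C}$ along the effective divisor $\beta+R$---of degree $\ell+\deg R=d+2g-2$---is split. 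Therefore $b=b(\widetilde{C})\le d+2g-2$, that is, $d\ge b-(2g-2)$.

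I expect the only subtle points to be, first, that $\widehat{M}$ remains generated by $V$ after blowing up---so that $\bar\phi$ is an honest morphism with no base points and the Riemann--Hurwitz bookkeeping applies---which is part of \cite[Theorem~1.1]{EG95} and is used identically in the proof of \Cref{lin and res}; and second, the identification, after restriction to $C$, of $d\phi$ (followed by $\Omega_{\widehat{C}}|_{C}\to K_{C}$) with $d\bar\phi$, which is just functoriality of K\"ahler differentials for $C\to\widehat{C}\to\mathbb{P}^1$. Note that the gonality $m$ of $C$ plays no role in this bound; it appears in the statement only alongside the complementary upper bound of \Cref{blow up index and linear series Cliff of a ribbon}.
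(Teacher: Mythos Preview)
Your proof is correct and follows essentially the same approach as the paper: extract from the $g^1_d$ a base-point-free line bundle on a blow-up, use the resulting map to $\mathbb{P}^1$ and its ramification divisor $R$ on $C$ to produce a splitting $K_C(-R)\to\Omega|_C$, and conclude via \Cref{equivalent definitions of blow-up index}. The only organizational difference is that the paper first treats the case where $\widetilde{M}$ is already a line bundle and then reduces the general case to it via the inequality $b(\widehat{C})\ge b(\widetilde{C})-\ell$, whereas you handle both in one pass by observing directly that $\widetilde{C}$ splits after blowing up along $\beta+R$; your packaging is slightly cleaner.
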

\begin{proof}
  Let $\widetilde{M}$ denote the generalized line bundle associated to the $g^1_d$ on $\widetilde{C}$.
  We observe that $\widetilde{M}$ must be generated by the 2 sections of the $g^1_d$.
  If not, the subsheaf generated by these two sections gives a generalised $g^1_{d'}$ with $d' < d$, contradicting the minimality of $d$.
  
  Let us first treat the case that $\widetilde{M}$ is a line bundle.
  Since $\widetilde{M}$ is a line bundle, $d$ is even.
  Then the $g^1_d$ on $\widetilde C$ induces a finite map $\widetilde C \to \mathbb P^1$ of degree $d$, leading to the diagram
  \[
\begin{tikzcd}
    C \arrow[d, hook] \arrow[dr]{f} & \\
    \widetilde{C}  \arrow[r, ] & \mathbb{P}^1.
\end{tikzcd}
\]
The diagram above induces the diagram
\[
\begin{tikzcd}
    f^*(K_{\mathbb{P}^1}) \arrow[d, hook]\arrow[dr, hook] & \\
    \Omega_{\widetilde{C}}|_C \arrow[r] & K_C.
\end{tikzcd}
\]
Note that $f \colon C \to \mathbb P^1$ is finite of degree $d/2$.
We have the identification $f^{*}(K_{\mathbb P^1}) = K_C(-\beta)$, where $\beta$ is the ramification divisor of $f$, which has degree $2g-2+d$.
Owing to this diagram, the pull-back of the extension
\[ 0 \to L \to \Omega_{\widetilde C}|_{C} \to K_C \to 0\]
to $K_C(-\beta)$ is split.
Therefore, by \Cref{equivalent definitions of blow-up index}, we get $b \leq 2g-2+d$.

Let us treat the general case, where $\widetilde M$ is only a generalized line bundle.
Then there exists an effective divisor $\alpha$ of degree $a$ such that $\widetilde M$ is the push-forward of a line bundle $\widetilde M'$ on the blow-up $\widetilde C'$ of $\widetilde C$ at $a$.
Then we have a $g^1_{d-a}$ on $\widetilde C'$ with the line bundle $\widetilde M'$.
Plainly, the blow-up index $b'$ of $\widetilde C'$ is bounded below by $b-a$.
So we get
\[ d-a \geq b' - (2g-2) \geq b-a - (2g-2),\]
and hence $d \geq b-(2g-2)$, as required.
\end{proof}

\section{Gonality stratification of the space of ribbons}\label{sec:gonality}
In this section we describe the stratification of the space of ribbons by gonality.
Fix a smooth curve $C$ of genus $g$ and a line bundle $L$ on $C$ of negative degree.
Then the space of ribbons on $C$ with conormal bundle $L$ is naturally the projective space $\mathbb P H^0(K_C^{2} \otimes L^{-1})$.
Let \[W_d \subset \mathbb P H^0(K_C^{2} \otimes L^{-1})\] be the subset of ribbons that carry a generalized $g^1_d$.
Let \[\Sec_k(C) \subset \mathbb P H^0(K_C^{2} \otimes L^{-1})\] be the $k$-secant variety of $C \subset \mathbb P H^0(K_C^{2} \otimes L^{-1})$.

\begin{theorem}\label{gonality stratification}
  With the notation above, $W_d \subset \Sec_{d+2g-2}(C)$ is the union of linear subspaces spanned by divisors of degree $d+2g-2$ that contain the ramification divisor of a map $C \to \mathbb P^1$ of degree at most $d/2$.
\end{theorem}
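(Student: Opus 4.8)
The plan is to relate membership in $W_d$ to the existence of finite morphisms to $\mathbb P^1$, using two ingredients recalled above: \Cref{blow-up index and secant variety}, whose proof shows that for an effective divisor $\gamma\subset C$ the point $e\in\mathbb P H^0(K_C^2\otimes L^{-1})$ corresponding to a ribbon $\widetilde C_e$ with conormal bundle $L$ lies on the linear span of $\gamma$ if and only if the push-out of its extension class along $L\to L(\gamma)$ vanishes; and \cite[Theorem~1.1]{EG95}, which presents a generalized line bundle on a ribbon as the push-forward of a line bundle from a blow-up. For a ribbon $\widetilde D$ on $C$ with conormal bundle $L_D$ I write $e_D\in\Ext^1(\Omega_C,L_D)$ for its extension class, and recall that the blow-up of $\widetilde D$ along an effective divisor $\alpha$ is the ribbon with conormal bundle $L_D(\alpha)$ and extension class the push-out of $e_D$ along $L_D\to L_D(\alpha)$.

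The crux is the following lemma, which is purely deformation theory. Let $f\colon C\to\mathbb P^1$ be finite of degree $e_0\ge 1$ with ramification divisor $\beta$, so that $df$ is the inclusion $f^*\Omega_{\mathbb P^1}=K_C(-\beta)\hookrightarrow K_C=\Omega_C$ and $\deg\beta=2g-2+2e_0$. Then $f$ extends to a morphism $\widetilde D\to\mathbb P^1$ if and only if the push-out of $e_D$ along $L_D\to L_D(\beta)$ vanishes --- equivalently, if and only if the pull-back of $0\to L_D\to\Omega_{\widetilde D}|_C\to K_C\to 0$ along $K_C(-\beta)\hookrightarrow K_C$ splits, since by \Cref{equivalent definitions of blow-up index} these two maps on $\Ext^1$ agree --- and in that case $\widetilde f^*\mathcal O(1)$ is a line bundle of degree $2e_0$ on $\widetilde D$ carrying a base-point-free pencil, i.e.\ a $g^1_{2e_0}$ (its degree is $2\deg_C(f^*\mathcal O(1))=2e_0$, as a line bundle $N$ on a ribbon has degree $\chi(N)-\chi(\mathcal O_{\widetilde D})=2\deg(N|_C)$). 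One implication is the computation in the proof of \Cref{lower bound to gonality}. For the other I use standard deformation theory of morphisms over the square-zero extension $C\hookrightarrow\widetilde D$ with ideal $L_D$: the obstruction to extending $f$ lies in $\Ext^1(f^*\Omega_{\mathbb P^1},L_D)$ and equals the image of $e_D$ under pull-back along $df$, hence vanishes under the hypothesis, giving the required morphism. The delicate point is identifying this obstruction with the pulled-back extension class.

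Granting the lemma, I show that $W_d$ is contained in the union described in the statement. Given a generalized $g^1_d$, $(V,\widetilde M)$, on $\widetilde C:=\widetilde C_e$, I may assume $\widetilde M$ is globally generated by $V$ after replacing it with the subsheaf it generates (\cite[Lemma~1.3]{EG95}): this only decreases $d$, and a divisor witnessing membership for a smaller value is enlarged to degree $d+2g-2$ by adjoining general points, which only enlarges its span. By \cite[Theorem~1.1]{EG95}, $\widetilde M=\pi_*\widetilde M'$ for a line bundle $\widetilde M'$, again generated by $V$, on the blow-up $\widetilde C'$ of $\widetilde C$ along an effective divisor $\alpha$ of degree $a$, with $\deg\widetilde M'=d-a$; by the degree formula above, $d-a=2e_0$ with $e_0\ge 1$. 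The pencil $V$ then defines a finite morphism $\widetilde C'\to\mathbb P^1$ of degree $2e_0$ extending a degree-$e_0$ morphism $f'\colon C\to\mathbb P^1$, whose ramification divisor $\beta'$ has degree $2g-2+2e_0$. Applying the lemma to $\widetilde C'$ (conormal bundle $L(\alpha)$), the push-out of $e_{\widetilde C'}$ along $L(\alpha)\to L(\alpha)(\beta')$ vanishes; since $e_{\widetilde C'}$ is the push-out of $e$ along $L\to L(\alpha)$, composing push-outs shows the push-out of $e$ along $L\to L(\alpha+\beta')$ vanishes, so by \Cref{blow-up index and secant variety} $e$ lies on the span of $\alpha+\beta'$, a divisor of degree $a+2g-2+2e_0=d+2g-2$ containing the ramification divisor $\beta'$ of the map $f'$ of degree $e_0\le d/2$.

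For the reverse inclusion, suppose $e$ lies on the span of $\gamma=\beta+\delta$, with $\beta$ the ramification divisor of a map $f\colon C\to\mathbb P^1$ of degree $e_0\le d/2$, so $\deg\delta=d-2e_0=:a\ge 0$. By \Cref{blow-up index and secant variety} the push-out of $e$ along $L\to L(\beta+\delta)$ vanishes; writing this as the push-out along $L\to L(\delta)\to L(\delta)(\beta)$ and letting $\widetilde C'$ be the blow-up of $\widetilde C$ along $\delta$ (conormal bundle $L(\delta)$, extension class the push-out of $e$ along $L\to L(\delta)$), the lemma produces a finite morphism $\widetilde C'\to\mathbb P^1$ of degree $2e_0$, i.e.\ a line-bundle $g^1_{2e_0}$ on $\widetilde C'$. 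Pushing it forward along the blow-up $\widetilde C'\to\widetilde C$ gives a generalized line bundle of degree $2e_0+a=d$ still carrying the pencil --- its two sections restrict injectively to the base-point-free pencil $f^*\mathcal O(1)$ on $C$ --- so $\widetilde C\in W_d$; and then $W_d\subseteq\Sec_{d+2g-2}(C)$ is automatic, $W_d$ having been exhibited as a union of spans of effective divisors of degree $d+2g-2$. I expect the one substantive obstacle to be the converse half of the lemma, i.e.\ constructing $\widetilde f$ from the splitting via the identification of the deformation obstruction with the pulled-back extension class; the remainder is bookkeeping with degrees, blow-ups and push-out maps, all governed by the results recalled above.
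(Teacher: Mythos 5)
Your proof follows essentially the same strategy as the paper's: both directions run through blowing up and the criterion that a finite map $C\to\mathbb P^1$ extends to a ribbon if and only if the extension class pulls back to zero along $K_C(-\beta)\hookrightarrow K_C$ (equivalently, pushes out to zero along $L\to L(\beta)$). The ``lemma'' you flag as the substantive obstacle and propose to prove by deformation theory of morphisms over a square-zero extension is precisely \cite[Theorem~1.6]{BE95}, which the paper cites directly, so your sketch is correct but can be closed simply by invoking that result.
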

\begin{proof}
  Let $\widetilde C \in W_{d}$.
  Then $\widetilde{C}$ carries a $g^1_d$, say $\widetilde M$.
  We show that $\widetilde C$ lies on a linear subspace spanned by a divisor of degree $d+2g-2$ that contains a ramification divisor of a map $C \to \mathbb P^1$ of degree at most $d/2$.
  It suffices to treat the case when the two sections of the $g^1_d$ genarate $\widetilde M$.
  (If not, we simply replace $\widetilde M$ by the subsheaf they generate).
  
  The generalised line bundle $\widetilde M$ is the push-forward of a line bundle $\widetilde M'$ on a blow-up $\widetilde C'$ of $\widetilde C$ in a divisor $\beta \subset C$ of degree $b$.
  Note that the degree of $\widetilde M'$ is $d-b$.
  The two sections of $\widetilde M'$ generate $\widetilde M'$ and hence yield a map $\widetilde C' \to \mathbb P^1$ of degree $d-b$.
  Its restriction $f \colon C \to \mathbb P^{1}$ is a map of degree $e = (d-b)/2$.
  Let $R$ be the ramification divisor of $f$.
  By \cite[Theorem~1.6]{BE95}, we have a map $K_C(-R) \to \Omega_{\widetilde C'}|_{C}$ that makes the following diagram commute
\begin{equation}\label{eqn:Rsplit}
\begin{tikzcd}
    & & & f^*(K_{\mathbb{P}^1}) = K_C(-R) \arrow[d]\arrow[dl] & \\
    0 \arrow[r] & L(\beta) \arrow[r] & \Omega_{\widetilde{C}'}|_C \arrow[r] & K_C \arrow[r] & 0.
  \end{tikzcd}
\end{equation}
By \Cref{equivalent definitions of blow-up index}, this means that $\widetilde C'$ splits after blowing it up in $R$.
Then $\widetilde C$ splits after blowing it up in $\beta + R$.
Note that $\beta + R$ has degree $d+2g-2$ and contains the ramification divisor $R$.

Conversely, let $R \subset C$ be the ramification divisor of a map $f \colon C \to \mathbb P^{1}$ of degree $e \leq d/2$, and let $\widetilde C$ lie in the linear span of a divisor of degree $d+2g-2$ of the form $\beta + R$, where $\beta$ is effective.
We produce a generalized $g^1_d$ on $\widetilde C$.
Since $\widetilde C$ lies in the span of $\beta + R$, the blow-up of $\widetilde C$ in $\beta + R$ is split (\Cref{blow-up index and secant variety}).
Let $\widetilde C'$ be the blow-up of $\widetilde C$ in $\beta$, so that the blow-up of $\widetilde C'$ in $R$ is split.
Then we have a map $K_C(-R) \to \Omega_{\widetilde C'}|_{C}$ making the diagram \eqref{eqn:Rsplit} commute.
Applying \cite[Theorem $1.6$]{BE95} again, we conclude that the map $f \colon C \to \mathbb P^1$ extends to a map $\widetilde C' \to \mathbb P^{1}$.
Then the pull-back of $\mathcal O(1)$ is a $g^1_{2e}$ on $\widetilde C'$, whose push-forward to $\widetilde C$ is a $g^1_d$ on $\widetilde C$.
\end{proof}

\begin{corollary}\label{inclusion of gonality stratification in secant variety}
  We have the inclusions
  \[ \Sec_{d-2m}(C) \subset W_d \subset \Sec_{d+2g-2}(C).\]
\end{corollary}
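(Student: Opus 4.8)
The plan is to derive both inclusions directly from the description of $W_d$ in \Cref{gonality stratification}, using the gonality pencil of $C$ to supply the ramification divisor needed for the lower inclusion.

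For the upper inclusion $W_d \subset \Sec_{d+2g-2}(C)$, I would simply observe that \Cref{gonality stratification} presents $W_d$ as a union of linear spans of effective divisors on $C$ of degree $d+2g-2$, and each such span is, by definition, contained in the $(d+2g-2)$-secant variety $\Sec_{d+2g-2}(C)$. Nothing further is needed.

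For the lower inclusion $\Sec_{d-2m}(C) \subset W_d$, I would first dispose of the trivial range: if $d < 2m$ then $d-2m < 0$ and $\Sec_{d-2m}(C) = \varnothing$, so I may assume $d \geq 2m$, which forces the integer $m$ to satisfy $m \leq d/2$. Since $C$ has gonality $m$, fix a degree-$m$ map $f \colon C \to \mathbb{P}^1$ and let $R$ be its ramification divisor; Riemann--Hurwitz gives $\deg R = 2g-2+2m$. Now take any point $x \in \Sec_{d-2m}(C)$, so $x$ lies on the linear span of some effective divisor $D \subset C$ of degree $d-2m$. Then $x$ a fortiori lies on the (larger) span of $D+R$, an effective divisor of degree $(d-2m)+(2g-2+2m) = d+2g-2$ that contains the ramification divisor $R$ of the degree-$m$ (hence degree $\leq d/2$) map $f$. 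By the description of $W_d$ in \Cref{gonality stratification}, this places $x$ in $W_d$, as desired.

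There is no genuine obstacle here: the substantive content lies entirely in \Cref{gonality stratification}, and this corollary is essentially bookkeeping. The only points that need care are the numerology---verifying $m \leq d/2$ so that the gonality pencil is an admissible witness in \Cref{gonality stratification}, and the Riemann--Hurwitz computation $\deg R = 2g-2+2m$ so that $D+R$ lands in exactly the secant variety $\Sec_{d+2g-2}(C)$ named in the theorem---together with the observation that the left-hand side is empty when $d < 2m$.
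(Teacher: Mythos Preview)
Your proof is correct and follows essentially the same approach as the paper's: both invoke \Cref{gonality stratification} directly for the upper inclusion, and for the lower inclusion both take a point in the span of a degree $d-2m$ divisor and enlarge it by the ramification divisor of a gonality pencil to land in the description of $W_d$. Your version is slightly more detailed (handling the empty case $d<2m$ and spelling out the Riemann--Hurwitz computation), but the argument is the same.
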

\begin{proof}
  The second inclusion is a part of \Cref{gonality stratification}.
  For the first, take $x \in \Sec_{d-2m}(C)$.
  Then $x$ is in the span of a divisor $\alpha$ of degree $d-2m$.
  Since $m$ is the gonality of $C$, we have a map $C \to \mathbb P^1$ of degree $m$.
  Its ramification divisor $R$ has degree $2g-2+2m$.
  Then $x$ is also in the span of $\alpha + R$.
\end{proof}

\Cref{gonality stratification} allows us to count the dimension of each gonality stratum.
To do so, let us further stratify $W_d$.
For a non-negative integer $e \leq d/2$, set $b = d-2e$.
Let $W_{e,b} \subset W_d$ be the union of linear subspaces spanned by divisors of degree $d+2g-2$ that contain the ramification divisor of a map $C \to \mathbb P^1$ of degree $e$.
Then, by definition, we have \[W_d = \bigcup_{e = 0}^{d/2} W_{e,b}.\]
From the proof of \Cref{gonality stratification}, we see that the points of $W_{e,b}$ correspond to ribbons $\widetilde C$ that carry a generalised $g^1_d$ that is the push-forward of a line bundle from a blow-up of $\widetilde C$ in a divisor of degree $b$.


Let $m$ be the gonality of $C$ and let $W^1_e(C)$ be the moduli space of base-point free linear series of rank $1$ and degree $e$ on $C$.
\begin{theorem}\label{dimension of W_d}
  Retain the setup above and assume that $L$ is a line bundle of degree $\leq -2m-5$.
  Set $p_a = 2g-1- \deg L$.
  Then 
  \[\dim(\overline W_{e,b}) \leq \dim W^1_e(C)+ 2g+2e+2b-3.\]
  In addition, if $C$ is a general curve of genus $g$, the following hold.
\begin{enumerate}
\item If $2e < g+2$, then $\overline W_{e,b}$ is empty, and otherwise, we have
  \[\dim(\overline W_{e,b}) \leq g+2d-5.\]
\item The gonality of a general ribbon on \(C\) with conormal bundle $L$ is the maximum, namely $\lfloor 1/2(p_a+3)\rfloor$.
  \end{enumerate}
\end{theorem}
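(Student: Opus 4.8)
The plan is to present each stratum $W_{e,b}$ as the image of a tautological family of linear spans, parametrized by base-point-free pencils on $C$ together with an auxiliary effective divisor, and to read off the dimension bound from an elementary fiber count; then to specialize to a general curve via Brill--Noether theory; and finally to combine the resulting bound with the gonality estimate of \Cref{blow up index and linear series Cliff of a ribbon} to compute the generic gonality.

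\textbf{The incidence variety and the first bound.} Recall from the discussion following \Cref{gonality stratification} that a point of $W_{e,b}$ is a ribbon lying in the linear span $\langle R_\ell + \beta\rangle \subset \mathbb P H^0(K_C^2\otimes L^{-1})^{*}$, where $\ell \in W^1_e(C)$ is a base-point-free $g^1_e$ with ramification divisor $R_\ell$ (of degree $2g-2+2e$ by Riemann--Hurwitz) and $\beta$ is an effective divisor of degree $b = d-2e$. I would introduce the incidence variety
\[ \mathcal I = \{(x,\ell,\beta) : x \in \langle R_\ell+\beta\rangle\} \subset \mathbb P H^0(K_C^2\otimes L^{-1})^{*}\times W^1_e(C)\times \Sym^b C.\]
Its projection to $W^1_e(C)\times\Sym^b C$ has fibers the linear spaces $\langle R_\ell+\beta\rangle$, of dimension at most $\deg(R_\ell+\beta)-1 = 2g-3+d$, since a length-$n$ subscheme of a projective space spans at most a $\mathbb P^{n-1}$. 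As $\overline W_{e,b}$ is the closure of the image of $\mathcal I$ under the first projection, we get
\[ \dim\overline W_{e,b} \le \dim \mathcal I \le \dim W^1_e(C) + b + (2g-3+d) = \dim W^1_e(C) + 2g + 2e + 2b - 3,\]
using $d = 2e+b$; this is the first assertion.

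\textbf{Specializing to a general curve and computing the generic gonality.} Now suppose $C$ is general. If $2e < g+2$ then $e$ is strictly below the gonality $m = \lceil (g+2)/2\rceil$ of $C$, so $C$ has no $g^1_e$, hence $W^1_e(C) = \emptyset$ and $\overline W_{e,b} = \emptyset$. If $2e \ge g+2$, then on a general curve the variety of $g^1_e$'s is pure of dimension $\rho(g,1,e) = 2e-g-2$ with dense base-point-free locus, so $\dim W^1_e(C) = 2e-g-2$; substituting and using $2d = 4e+2b$ yields $\dim\overline W_{e,b} \le g+2d-5$, which is part (1). For part (2), observe that the space of ribbons $\mathbb P H^0(K_C^2\otimes L^{-1})^{*}$ has dimension $g+p_a-3$ (as $\deg(K_C^2\otimes L^{-1}) = 2g-3+p_a > 2g-2$, so $h^1 = 0$). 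Since $W_d = \bigcup_e W_{e,b}$, part (1) gives $\dim\overline W_d \le g+2d-5$, and a direct comparison through the floor shows $g+2d-5 < g+p_a-3$ exactly when $d < \lfloor (p_a+3)/2\rfloor$. Hence for every such $d$ the closed set $\overline W_d$, and the finite union of these over all $d < \lfloor(p_a+3)/2\rfloor$, is a proper subvariety, so a general ribbon has gonality at least $\lfloor(p_a+3)/2\rfloor$. For the reverse inequality I would note that $\deg L \le -2m-5$ forces $-2L$ to be base-point-free of degree $\ge 2g$ (so $|-2L|$ has a smooth member) and forces $p_a = 2g-1-\deg L > 2g-1+2m$; thus \Cref{blow up index and linear series Cliff of a ribbon} applies and bounds the gonality of every such ribbon above by $\lfloor (p_a+3)/2\rfloor$. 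The two bounds together give part (2).

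\textbf{Expected obstacle.} I do not expect a serious difficulty: the argument is essentially the assembly of an incidence-variety count with Brill--Noether theory. The points requiring the most care are the uniform identification $\dim W^1_e(C) = \rho(g,1,e)$ for \emph{all} relevant $e$ --- in particular for large $e$, where the Brill--Noether locus in $\operatorname{Pic}^e(C)$ has dimension $g$ rather than $\rho$, but the Grassmannian of pencils inside $H^0(A)$ supplies exactly the missing dimensions --- together with the bookkeeping that matches $g+2d-5$ against $g+p_a-3$ through the floor function, and the verification that the hypotheses of \Cref{blow up index and linear series Cliff of a ribbon} hold under $\deg L\le -2m-5$.
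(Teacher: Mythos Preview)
Your proof is correct and follows essentially the same approach as the paper: both set up the same incidence variety over $W^1_e(C)\times\Sym^bC$ with projective-linear fibers of dimension at most $2g+2e+b-3$, apply Brill--Noether theory for the general-curve case, and compare $g+2d-5$ against the ambient dimension $g+p_a-3$. The only cosmetic difference is in establishing the upper bound on gonality: the paper invokes smoothability together with \Cref{semicontinuity of gonality} directly, whereas you route through \Cref{blow up index and linear series Cliff of a ribbon} (whose proof is exactly that argument), after verifying its hypotheses from $\deg L\le -2m-5$.
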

\begin{proof}
  Consider a point \((\phi, \beta) \in W^1_e(C) \times \Sym^b(C)\).
  Let \(R_{\phi}\) be the ramification divisor of \(\phi\).
  Consider the span of \(R + \beta\) in \(\mathbf{P} H^0(K_C ^{2} \otimes L^{-1})\).
  For \((\phi, \beta)\) in a non-empty Zariski open subset \(U \subset W^1_e(C) \times \Sym^b(C)\), this span has a constant dimension \(n\).
  Since \(\deg R = 2g+2e-2\) and \(\deg \beta = b\), we have \(n \leq 2g+2e+b-3\).
  Consider the incidence variety
  \[ I \subset U \times \mathbb P H^0(K_C^{2} \otimes L^{-1})\]
  consisting of $(\phi, \beta, x)$ such that $x$ lies on the span of $R_{\phi} + \beta$.
  Then $I$ is a closed subvariety and its projection to $\mathbf{P} H^0(K_C^2 \otimes L^{-1})$ contains a Zariski open subset of $W_{e,b}$.
  The fibers of $I \to U$ are projective spaces of dimension $n$.
  Therefore, we get
  \begin{align*}
    \dim \overline W_{e,b}
    &\leq \dim I = \dim U + n = \dim W^1_e(C) + b + n\\
    &\leq \dim W^1_e(C) + b + (2g+2e+b-3),
  \end{align*}
  as required.

  If $C$ is general, then $\dim W^1_e(C) = 2e-g-2$ (and $W^1_e(C)$ is empty if $2e < g+2$).
  Since $d = 2e + b$, we get
  \[ \dim \overline W_{e,b} \leq g+2d-5.\]
  
  The dimension of the ambient $\dim \mathbf{P} H^0(K_c^2 \otimes L^{-1})$ is $g+p_a-3$.
  Therefore, if \[g+2d-5 < g+p_a-3,\] then $\overline W_{e,b} \subset \mathbf{P} H^0(K_c^2 \otimes L^{-1})$ is a proper subset.
  That is, a generic point of the ambient space is contained in $W_{d}$ only when $d \geq \lfloor (p_a+3)/2 \rfloor$.
  In other words, a generic ribbon has gonality at least $\lfloor (p_a+3)/2 \rfloor$.
  To see that a generic ribbon has gonality at most $\lfloor (p_a+3)/2 \rfloor$, observe that such a ribbon is a limit of smooth curves and smooth curves have gonality at most $\lfloor (p_a+3)/2 \rfloor$.
  Then apply the semi-continuity of gonality (\Cref{semicontinuity of gonality}).
\end{proof}

We end the section with some examples of the gonality loci of ribbons.
In all the examples, \(C\) will be a smooth curve of genus \(g\) and \(L\) a line bundle of negative degree on \(C\).
We set \[p_a = -\deg L + 2g - 1.\]
We consider \(C\) embedded in \(\mathbf{P}H^0(K_C^{\otimes 2} \otimes L^{-1})^{*}\) by the complete linear series.
Recall that the points of \(\mathbf{P}H^0(K_C^{\otimes 2} \otimes L^{-1})^{*}\) correspond to ribbons on \(C\) of arithmetic genus \(p_a\) and with conormal bundle \(L\).
\begin{example}[Ribbons on hyperelliptic curves]
  \label{ex:hyperelliptic}
  Let \(C\) be a hyperelliptic curve of genus \(g\).
  We describe \(W_{e,b}\) explicitly for \(e = 2\).
  
  Assume for simplicity that \(p_a \geq 2g+5\).
  Let \(R\) be the ramification divisor of the degree 2 map \(C \to \mathbf{P}^1\).
  Fix \(e = 2\) and \(d \geq 4\) and \(b = d-2e = d-4\).
  By \Cref{gonality stratification}, \(W_{2,d-4} \subset \mathbf{P}H^0(K_C^{\otimes 2} \otimes L^{-1})^{*}\) is the union of the span of \(R + \beta\) as \(\beta\) varies in \(\operatorname{Sym}^b(C)\).
  Let \(\langle R \rangle \subset \mathbf{P}H^0(K_C^{\otimes 2} \otimes L^{-1})^{*}\) be the span of \(R\).
  Then \(\langle R \rangle\) is a projective space of dimension \(2g+1\).
  We have the linear projection
  \[ \mathbf{P}H^0(K_C^{\otimes 2} \otimes L^{-1})^{*} \dashrightarrow \mathbf{P}H^0(K_C^{\otimes 2} \otimes L^{-1}(-R))^{*}\]
  with center \(\langle R \rangle\).
  The projection of \(C\) yields an embedding \(C \subset \mathbf{P}H^0(K_C^{\otimes 2} \otimes L^{-1}(-R))^{*}\) by the complete linear series.
  Then \(\overline W_{2,d-4} \subset \mathbf{P}H^0(K_C^{\otimes 2} \otimes L^{-1})^{*}\) is the cone over the \(b\)-secant variety of \(C\) in \(\mathbf{P}H^0(K_C^{\otimes 2} \otimes L^{-1}(-R))^{*}\).
  Note that the ambient space \(\mathbf{P}H^0(K_C^{\otimes 2} \otimes L^{-1}(-R))^{*}\) has dimension \(p_a-g-5\).
  Hence the \(b\)-secant variety has dimension
  \( \min(2b-1, p_a-g-5)\).
  Therefore, we get
  \[ \dim \overline W_{2,d-4} = \min(2d+2g-7, p_a+g-3).\]
  Note that \(2g+2g-7\) is the bound given by \Cref{dimension of W_d} and \(p_a+g-3\) is simply the dimension of the ambient projective space \(\mathbf{P}(K_C^{\otimes 2} \otimes L^{-1})^{*}\). 
  Observe that if \(d \geq (p_a-g+4)/2\), then \(\overline W_{2,d-4}\) is the ambient projective space.
  In particular, the gonality of a generic ribbon over a hyperelliptic curve is less than the maximum \(\lfloor(p_a+3)/2\rfloor\).
\end{example}

\begin{example}[Non-existence of a limiting \(g^1_3\)]
  \label{ex:ellipticg13}
  Let \(C\) be an elliptic curve and take \(p_a = 4\).
  Then \(C\) is a limit of smooth curves of genus 4, which have gonality 3.
  Nevertheless, from \Cref{gonality stratification} it follows that \(W_3\) is empty.
  So \(C\) does not admit a generalised \(g^1_3\).
  This does not contradict the semi-continuity of gonality (\Cref{semicontinuity of gonality}), since the condition \(p_a > d + 2g-1\) is not satisfied.
\end{example}

\begin{example}[An elliptic normal curve in \(\mathbf{P}^5\)]
  Let \(C\) be a smooth curve of genus \(1\) and take \(p_a = 7\).
  Then the conormal bundle \(L\) has degree \(-6\).
  The complete linear series gives an embedding \[C \subset \mathbf{P}(K_C^{\otimes 2} \otimes L^{-1})^{*} = \mathbf{P}^5.\]
  \Cref{gonality stratification} implies that \(W_{d}\) is empty for \(d = 1,2,3\).
  We describe \(W_4\).
  We have \(W_4 = W_{2,0}\) and it is the union of the spans of the ramification divisors of degree \(2\) maps \(C \to \mathbf{P}^1\).
  Fix one degree \(2\) map \(f \colon C \to \mathbf{P}^1\); all others are translates of \(f\).
  Let \(R = p_1 + p_2 + p_3 + p_4\) be the ramification divisor of \(f\) and \(P \subset \mathbf{P}^5\) the \(\mathbf{P}^3\) spanned by \(R\).
  Then \(W_4\) is the union of the translates of \(P\) by the points of the Jacobian of \(C\).
  This is a hypersurface of degree 6.
\end{example}

\section{ Green's conjecture and resolution Clifford index for a general ribbon on a general curve}\label{sec:green_general}

In this section, we relate the linear series Clifford index to the resolution Clifford index.
Specifically, we prove Green's conjecture for a general ribbon using Green's conjecture for a general smooth curve.
The proof follows the ideas in \cite{D18}.

We begin with two results about embedded and abstract deformations of ribbons.
\begin{proposition}\label{unobstructedness of embedded ribbons}
  Let $\widetilde{C}$ be a ribbon of arithmetic genus $p_a$ on a smooth curve $C$ of genus $g$.
  Suppose $p_a \geq 4g-2$.
  Let $\widetilde C \subset \mathbf{P}^N$ be the embedding by a complete linear series of a very ample line bundle of degree at least $2p_a$.
  Then the deformations of $\widetilde C \subset \mathbf{P}^N$ are unobstructed.
  That is, the Hilbert scheme of $\mathbf{P}^N$ is smooth at the point represented by $[\widetilde C]$.
\end{proposition}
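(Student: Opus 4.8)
The plan is to show that the obstruction space $H^1(\widetilde C, N_{\widetilde C/\mathbf P^N})$ vanishes, which suffices for smoothness of the Hilbert scheme at $[\widetilde C]$. Since $\widetilde C \subset \mathbf P^N$ is embedded by a complete linear series of a very ample line bundle $\widetilde H$ of degree $\geq 2p_a$, the standard Euler sequence argument reduces this to controlling $H^1$ of twists of the tangent and normal sheaves. First I would use the normal bundle sequence $0 \to T_{\widetilde C} \to T_{\mathbf P^N}|_{\widetilde C} \to N_{\widetilde C/\mathbf P^N} \to 0$ together with the Euler sequence restricted to $\widetilde C$, so that the vanishing of $H^1(N_{\widetilde C/\mathbf P^N})$ follows from $H^1(\widetilde H) = 0$ (automatic since $\deg \widetilde H$ is large), $H^2 = 0$ (automatic since $\widetilde C$ is a curve), and $H^1(T_{\widetilde C})$ being killed appropriately — more precisely, it suffices that $H^1(\widetilde C, T_{\mathbf P^N}|_{\widetilde C}) = 0$, equivalently $H^1(\widetilde C, \widetilde H^{\oplus(N+1)}/\mathcal O) = 0$, which again reduces to $H^1(\widetilde H)=0$ and $H^2(\mathcal O_{\widetilde C}) = 0$. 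So in fact the embedded obstructions vanish essentially for free from the degree hypothesis, and the genuine content is elsewhere.

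The subtlety is that one wants unobstructedness as a statement whose hypotheses ($p_a \geq 4g-2$) will be reused, so I expect the actual argument to be organized around the abstract deformations of $\widetilde C$ as a scheme and the surjectivity of the map from embedded deformations to abstract deformations. Concretely, I would analyze the ribbon via the conormal exact sequence $0 \to L \to \Omega_{\widetilde C}|_C \to K_C \to 0$ and the associated description of $T_{\widetilde C}$, splitting $H^i(\widetilde C, -)$ of relevant sheaves into pieces over $C$ using the filtration by the square-zero ideal $L$. The key cohomological inputs will be that $H^1(C, T_C \otimes L^{-1})$ or similar terms controlling first-order abstract deformations of the ribbon structure behave well, and that $H^1$ of the relevant bundles on $C$ vanishes once $-\deg L = p_a - 2g+1$ is large enough — this is precisely where $p_a \geq 4g-2$ enters, making degrees of the form $2K_C \otimes L^{-1}$ or $K_C \otimes L^{-1}$ positive enough to force $H^1 = 0$ on $C$.

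The main obstacle I anticipate is bookkeeping the normal sheaf of a non-reduced scheme: $N_{\widetilde C/\mathbf P^N}$ is not locally free in the naive sense one is used to, and care is needed because $\widetilde C$ is singular (everywhere non-reduced), so one must work with $\mathcal{H}om(\mathcal I/\mathcal I^2, \mathcal O_{\widetilde C})$ or with the cotangent complex, and verify that the relevant $\mathrm{Ext}$ or $T^i$ groups still compute deformations and obstructions correctly. Assuming \Cref{very ampleness on ribbons} and \Cref{canonical morphism of ribbons} to handle projective normality and very ampleness in the needed range, the cleanest route is probably to phrase everything in terms of $\mathrm{Ext}^i_{\mathcal O_{\widetilde C}}(\Omega_{\widetilde C}, \mathcal O_{\widetilde C})$ and $\mathrm{Ext}^i_{\mathcal O_{\widetilde C}}(\mathcal I/\mathcal I^2, \mathcal O_{\widetilde C})$, use the local-to-global spectral sequence, and show all obstruction-carrying terms vanish from the degree hypothesis; I would then record the smoothness of the Hilbert scheme as the stated conclusion.
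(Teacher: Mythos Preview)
Your first paragraph contains a real gap. For the singular scheme $\widetilde C$, the dual of the conormal sequence is \emph{not} the short exact sequence $0 \to T_{\widetilde C} \to T_{\mathbf P^N}|_{\widetilde C} \to N_{\widetilde C/\mathbf P^N} \to 0$; rather, it is a four-term sequence with cokernel the local deformation sheaf $T^1_{\widetilde C} = \mathcal{E}xt^1(\Omega_{\widetilde C},\mathcal O_{\widetilde C})$, which is nonzero everywhere on a ribbon. So $H^1(T_{\mathbf P^N}|_{\widetilde C})=0$ by itself does not give $H^1(N_{\widetilde C/\mathbf P^N})=0$; you would still need $H^1(T^1_{\widetilde C})=0$. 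This is salvageable (for a ribbon $T^1_{\widetilde C}\cong L^{-2}$ as an $\mathcal O_C$-module, and $\deg L^{-2}\ge 4g-2>2g-2$), but you have not identified this term, and your claim that the embedded obstructions vanish ``essentially for free'' is not justified. Separately, your worry that $N_{\widetilde C/\mathbf P^N}$ is not locally free is misplaced: $\widetilde C$ is a local complete intersection, so its normal sheaf \emph{is} locally free of rank $N-1$. Finally, your second paragraph is really about the abstract deformation space and the forgetful map from embedded to abstract deformations; that is the content of the \emph{next} proposition in the paper, not this one.

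The paper's argument avoids the tangent sequence on $\widetilde C$ entirely and works only on the smooth curve $C$. Since $N_{\widetilde C/\mathbf P^N}$ is locally free, tensoring $0\to L\to\mathcal O_{\widetilde C}\to\mathcal O_C\to 0$ by it gives $0\to N_{\widetilde C}\otimes L\to N_{\widetilde C}\to N_{\widetilde C}|_C\to 0$, so it suffices to kill $H^1$ of $N_{\widetilde C}|_C$ and of $N_{\widetilde C}|_C\otimes L$. The paper then relates $N_{\widetilde C}|_C$ to $N_{C/\mathbf P^N}$ via the ideal filtration $I_{\widetilde C}\subset I_C^2\subset I_C$: dualizing $0\to I_C^2/(I_C I_{\widetilde C})\to I_{\widetilde C}/(I_C I_{\widetilde C})\to I_{\widetilde C}/I_C^2\to 0$ and $0\to I_{\widetilde C}/I_C^2\to I_C/I_C^2\to L\to 0$ produces short exact sequences on $C$ whose outer terms are $L^{-1}$, $L^{-2}$, and $N_{C/\mathbf P^N}$. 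The Euler and normal sequences on $C$ (which \emph{are} short exact, since $C$ is smooth), together with $\deg\mathcal O_C(1)\ge p_a$ and $\deg(\mathcal O_C(1)\otimes L)\ge 2g-1$, then give all the needed $H^1$ vanishings. The hypothesis $p_a\ge 4g-2$ enters precisely to make $\deg(\mathcal O_C(1)\otimes L)\ge 2g-1$ and $\deg L^{-1}\ge 2g-1$.
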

\begin{proof}
  Let $L$ be the conormal bundle of the ribbon $\widetilde{C}$.

  We have the sequence
  \begin{equation}\label{eqn:c1}
    0 \to T_C \to T_{\mathbf{P}^N} \otimes \mathcal{O}_C \to N_{C/\mathbf{P}^N} \to 0
  \end{equation}
  and the Euler exact sequence
  \begin{equation}\label{eqn:c2}
    0 \to \mathcal{O}_C \to \mathcal{O}_C(1)^{\oplus N} \to  T_{\mathbf{P}^N} \otimes \mathcal{O}_C \to 0.
  \end{equation}
  Note that \(\deg \mathcal{O}_C(1) \geq p_a\) and \(\deg \mathcal{O}_C(1) \otimes L \geq 2g-1\).
  So both \(\mathcal{O}_C(1)\) and \(\mathcal{O}_C(1) \otimes L\) have vanishing \(H^1\).
  From the long exact sequence in cohomology applied to \eqref{eqn:c2} and its twist by \(L\), we get
  \begin{equation}\label{eqn:ct}
    H^{1}(T_{\mathbf{P}^N} \otimes \mathcal{O}_C) = 0 \text{ and } H^{1}(T_{\mathbf{P}^N} \otimes L) = 0.
  \end{equation}
  As a result, from the long exact sequence in cohomology applied to \eqref{eqn:c1} and its twist by \(L\), we get
  \begin{equation}\label{eqn:c}
    H^1(N_{C/\mathbf{P}^N}) = 0 \text{ and } H^1(N_{C/\mathbf{P}^N} \otimes L) = 0.
  \end{equation}

  We now turn to the normal bundle of \(\widetilde C\).
  Since $\widetilde{C}$ is a local complete intersection, its normal bundle is locally free of rank \(N-1\).
  Restricting it to \(C\) yields the exact sequence
   \begin{equation}\label{1}
    0 \to N_{\widetilde{C}/\mathbb{P}^N} \otimes L \to N_{\widetilde{C}/\mathbb{P}^N} \to N_{\widetilde{C}/\mathbb{P}^N} \otimes \mathcal{O}_C \to 0.
\end{equation}
We have the exact sequence
\[ 0 \to I_C^2/ (I_C \cdot I_{\widetilde C}) \to I_{\widetilde C} / (I_C \cdot I_{\widetilde C}) \to I_{\widetilde C}/I_C^2 \to 0,\]
whose terms are locally free \(\mathcal{O}_C\)-modules of ranks \(1\), \(N-1\), and \(N-2\), respectively.
Specifically, the kernel is the line bundle \(L^2\) and the middle term is the conormal bundle of \(\widetilde C\) restricted to \(C\).
Applying \(\Hom(-,\mathcal{O}_C)\) yields the sequence
\begin{equation}\label{2}
    0 \to \Hom(I_{\widetilde{C}}/I_{C}^2, \mathcal{O}_C) \to N_{\widetilde{C}/\mathbb{P}^N} \otimes \mathcal{O}_C \to L^{-2} \to 0 
\end{equation}
Finally, we have the sequence
\[ 0 \to I_{\widetilde C}/I_C^{2} \to I_C/I_C^2 \to I_C/I_{\widetilde C} \to 0,\]
whose terms are locally free \(\mathcal{O}_C\)-modules of ranks \(N-2\), \(N-1\), and \(1\), respectively.
Specifically, the cokernel is the line bundle \(L\) and the middle term is the conormal bundle of \(C\).
Applying \(\Hom(-,\mathcal{O}_C)\) yields the sequence
\begin{equation}\label{3}
    0 \to L^{-1} \to N_{C/\mathbb{P}^N} \to \Hom(I_{\widetilde{C}}/I_{C}^2, \mathcal{O}_C) \to 0.
\end{equation}
Using the vanishings \eqref{eqn:c}, the long exact sequence in cohomology applied to \eqref{3} and its twist by \(L\) yields
\begin{equation}\label{eqn:int}
  H^1(\Hom(I_{\widetilde C}/I_C^2, \mathcal{O}_C)) = 0 \text{ and } H^1(\Hom(I_{\widetilde C}/I_C^2, \mathcal{O}_C) \otimes L) = 0.
\end{equation}
Note that \(\deg L^{-2} \geq \deg L^{-1} = p_a-2g+1 \geq 2g-1\).
So \(H^1(L^{-2}) = H^1(L^{-1}) = 0\).
Combined with the vanishing \eqref{eqn:int}, the long exact sequence in cohomology applied to \eqref{2} and its twist by \(L\) yields
\begin{equation}\label{eqn:cc}
  H^1(N_{\widetilde C/\mathbf{P}^N} \otimes \mathcal{O}_C) = 0 \text{ and } H^1(N_{\widetilde C/\mathbf{P}^N} \otimes \mathcal{O}_C \otimes L) = 0.
\end{equation}
Finally, the long exact sequence in cohomology applied to \eqref{1} yields
\[ H^1(N_{\widetilde C/\mathbf{P}^N}) = 0.\]
So the embedded deformations of \(\widetilde C \subset \mathbf{P}^N\) are unobstructed.
\end{proof}

We now turn to the abstract deformations of \(\widetilde C\).
\begin{proposition}\label{unobstructedness of ribbons}
   Let $\widetilde{C}$ be a ribbon of arithmetic genus $p_a$ on a smooth curve $C$ of genus $g$.
   Suppose $p_a \geq 4g-2$.
   Then the deformations of $\widetilde{C}$ are unobstructed.
   That is, a versal deformation space of $\widetilde{C}$ is smooth.
 \end{proposition}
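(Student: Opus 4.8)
The plan is to bootstrap from the unobstructedness of \emph{embedded} deformations established in \Cref{unobstructedness of embedded ribbons}. Concretely, I would fix a very ample line bundle $\widetilde M$ on $\widetilde C$ of degree at least $2p_a$, embed $\widetilde C \hookrightarrow \mathbf{P}^N$ by the complete linear series $|\widetilde M|$, and then compare three deformation functors: the local Hilbert functor $\mathrm{Hilb}_{[\widetilde C]}$ of the embedded ribbon, the deformation functor $\mathrm{Def}_{(\widetilde C,\widetilde M)}$ of the polarized ribbon, and the deformation functor $\mathrm{Def}_{\widetilde C}$ of the abstract ribbon. There are natural forgetful maps $\mathrm{Hilb}_{[\widetilde C]} \to \mathrm{Def}_{(\widetilde C,\widetilde M)} \to \mathrm{Def}_{\widetilde C}$, and I would show that each is formally smooth. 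Since the source of the composite is smooth by \Cref{unobstructedness of embedded ribbons}, it follows that $\mathrm{Def}_{\widetilde C}$ is smooth, which is the assertion.

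For the existence of $\widetilde M$: starting from a very ample line bundle $M$ on $C$ of large degree $n$ and a line bundle $\widetilde M$ on $\widetilde C$ with $\widetilde M|_C = M$, the hypotheses of \Cref{very ampleness on ribbons}(1) hold as soon as $n \geq p_a$ — this forces $H^1(M \otimes L) = 0$ and hence the surjectivity of $H^0(\widetilde M) \to H^0(M)$ — and choosing $n$ large makes $\deg \widetilde M = 2n + \deg L \geq 2p_a$. Note also $H^1(\widetilde C, \widetilde M) = 0$, so by cohomology and base change $\pi_*\widetilde M$ is locally free of rank $N+1$ in any flat deformation of the polarized ribbon over an Artin base; a trivialization of $\pi_*\widetilde M$ then identifies $\mathbf{P}(\pi_*\widetilde M)$ with relative projective space and recovers an embedded deformation, while the obstruction to lifting such a trivialization across a square-zero extension lies in $H^1(\widetilde C, \widetilde M)^{\oplus(N+1)} = 0$. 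This gives the formal smoothness of $\mathrm{Hilb}_{[\widetilde C]} \to \mathrm{Def}_{(\widetilde C,\widetilde M)}$.

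For the second map, given a deformation $\mathcal X \to \operatorname{Spec} A$ of $\widetilde C$ and a square-zero extension $A' \to A$ with kernel $J$, the obstruction to extending a line bundle on $\mathcal X$ to a line bundle on the corresponding deformation over $A'$ lies in $H^2(\widetilde C, \mathcal O_{\widetilde C}) \otimes_k J$, which vanishes because $\widetilde C$ is a curve; and when an extension exists the set of extensions is a torsor under $H^1(\widetilde C, \mathcal O_{\widetilde C}) \otimes_k J$. Hence $\mathrm{Def}_{(\widetilde C,\widetilde M)} \to \mathrm{Def}_{\widetilde C}$ is formally smooth. Combining this with the previous paragraph and \Cref{unobstructedness of embedded ribbons} completes the argument.

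I expect the main (and essentially only) obstacle to be expository bookkeeping: the genuine geometric input, the vanishing $H^1(N_{\widetilde C/\mathbf{P}^N}) = 0$, is already proven in \Cref{unobstructedness of embedded ribbons} under exactly the hypothesis $p_a \geq 4g-2$, so what remains is the standard but slightly fiddly verification that the two forgetful maps of deformation functors are smooth. Alternatively, one could argue directly: a ribbon is a local complete intersection (locally a hypersurface $y^2 = 0$ in a smooth surface), so its cotangent complex $\mathbf{L}_{\widetilde C}$ is perfect, concentrated in cohomological degrees $-1$ and $0$, and therefore $\mathcal{E}xt^i(\mathbf{L}_{\widetilde C}, \mathcal O_{\widetilde C}) = 0$ for $i \geq 2$; the local-to-global spectral sequence on the one-dimensional scheme $\widetilde C$ then identifies the obstruction space $\mathrm{Ext}^2(\mathbf{L}_{\widetilde C}, \mathcal O_{\widetilde C})$ with $H^1(\widetilde C, \mathcal{E}xt^1(\mathbf{L}_{\widetilde C}, \mathcal O_{\widetilde C}))$, and a local computation together with the structure exact sequences used in the proof of \Cref{unobstructedness of embedded ribbons} shows that $\mathcal{E}xt^1(\mathbf{L}_{\widetilde C}, \mathcal O_{\widetilde C})$ is a line bundle on $C$ of degree growing linearly in $p_a$, whose $H^1$ vanishes once $p_a \geq 4g-2$. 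In this alternative the main obstacle is pinning down that line bundle and its degree precisely, which is why I would prefer the first route.
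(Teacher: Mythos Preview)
Your proposal is correct and shares the paper's overall strategy: embed $\widetilde C$ in $\mathbf{P}^N$ by a complete linear series of high degree, invoke \Cref{unobstructedness of embedded ribbons} for smoothness of the Hilbert scheme, and then show that the forgetful map from embedded to abstract deformations is formally smooth. The difference is in how that last step is carried out. The paper does it in one stroke: from the conormal sequence $0 \to \mathcal I/\mathcal I^2 \to \Omega_{\mathbf{P}^N}|_{\widetilde C} \to \Omega_{\widetilde C} \to 0$ and its long exact sequence of $\Ext(-,\mathcal O_{\widetilde C})$, the vanishing $H^1(T_{\mathbf{P}^N}|_{\widetilde C}) = 0$ (obtained from the ribbon exact sequence and the vanishings already established in the proof of \Cref{unobstructedness of embedded ribbons}) makes the forgetful map surjective on tangent spaces and injective on obstruction spaces. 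You instead factor through $\mathrm{Def}_{(\widetilde C,\widetilde M)}$ and use the two elementary vanishings $H^1(\widetilde C,\widetilde M)=0$ and $H^2(\widetilde C,\mathcal O_{\widetilde C})=0$. Your route avoids computing $H^1(T_{\mathbf{P}^N}|_{\widetilde C})$ and is slightly more modular; the paper's route is shorter once one has the cohomology computations from \Cref{unobstructedness of embedded ribbons} in hand. (One tiny slip: for a line bundle $\widetilde M$ on $\widetilde C$ with $\widetilde M|_C = M$ of degree $n$, one has $\deg \widetilde M = 2n$, not $2n+\deg L$; this only makes your estimate more conservative and does not affect the argument.)
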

 \begin{proof}
   Choose an embedding $\widetilde C \subset \mathbf{P}^{N}$ by the complete linear series of a very ample line bundle of degree at least $2p_a$.
   We relate the abstract versus embedded deformations of $\widetilde C$.

   Consider the conormal sequence
\[0 \to \mathcal{I}/\mathcal{I}^2 \to \Omega_{\mathbb{P}^N}|_{\widetilde{C}} \to \Omega_{\widetilde{C}} \to 0.\]
Applying $\Hom(-, \mathcal{O}_{\widetilde{C}})$ yields
\begin{equation}\label{eqn:longext}
  \Hom(\mathcal{I}/\mathcal{I}^2, \mathcal{O}_{\widetilde{C}}) \to \Ext^1(\Omega_{\widetilde{C}}, \mathcal{O}_{\widetilde{C}}) \to \Ext^1(\Omega_{\mathbb{P}^N}|_{\widetilde{C}}, \mathcal{O}_{\widetilde{C}}) \to \Ext^1(\mathcal{I}/\mathcal{I}^2, \mathcal{O}_{\widetilde{C}}) \to \Ext^2(\Omega_{\widetilde{C}}, \mathcal{O}_{\widetilde{C}}).
\end{equation}
Consider $\Ext^1(\Omega_{\mathbb{P}^N}|_{\widetilde{C}},\mathcal{O}_{\widetilde{C}}) = H^1(T_{\mathbb{P}^N}|_{\widetilde{C}}).$
We have the exact sequence
\[ 0 \to T_{\mathbf{P}^N}|_C \otimes L \to T_{\mathbf{P}^N}|_{\widetilde C} \to T_{\mathbf{P}^N|_C} \to 0.\]
By \eqref{eqn:ct}, the \(H^1\) of the first and the third term vanishes.
So \(H^1\) of the middle term also vanishes.
Now, \eqref{eqn:longext} says that
\[ H^0(N_{\widetilde C/\mathbf{P}^N}) \to \Ext^1(\Omega_{\widetilde C}, \mathcal{O}_{\widetilde C})\]
is surjective, and
\[ H^1(N_{\widetilde C/\mathbf{P}^N}) \to \Ext^2(\Omega_{\widetilde C}, \mathcal{O}_{\widetilde C})\]
is injective.
That is, the forgetful map from embedded to abstract deformations is surjective on tangent spaces and injective on obstruction spaces.
As a result, the forgetful map is smooth.
We know that the embedded deformation space is smooth  (\Cref{unobstructedness of embedded ribbons}).
We conclude that the abstract deformation space is also smooth.
\end{proof}

We examine the non-nodal locus in a versal deformation.
\begin{proposition}\label{worse than nodal}
  Let $\widetilde{C}$ be a ribbon of arithmetic genus $p_a$  on a smooth curve of genus $g$ with $p_a \geq 3g-1$.
  Let \((U,0)\) be a versal deformation space of \(\widetilde C\) with a versal family of deformations $\mathcal{C} \to U$.
  Let $N \subset U$ be the closed subset containing $u \in U$ such that $\mathcal{C}_u$ has worse than nodal singularities.
  Then \(N \subset U\) has codimension at least $2$ at \(0\).
\end{proposition}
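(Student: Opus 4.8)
The plan is to stratify $N$ according to the nature of the singular fibre and to control each stratum through the local deformation theory of the ribbon singularity, the hypothesis $p_a\geq 3g-1$ entering only through a vanishing of $H^1$ on $C$. Concretely, write $\mathcal{T}^0_{\widetilde{C}}=\mathcal{H}om(\Omega_{\widetilde{C}},\mathcal{O}_{\widetilde{C}})$ and let $\mathcal{T}^1_{\widetilde{C}}$ be the sheaf of first-order deformations of the singularities of $\widetilde{C}$ (the $\mathcal{E}xt^1$ of the cotangent complex $\mathbb{L}_{\widetilde{C}}$). Since $\widetilde{C}$ is a local complete intersection, the higher local sheaves $\mathcal{T}^{\geq 2}_{\widetilde{C}}$ vanish; a computation in the local model $\widetilde{C}=\{y^2=0\}\subset\mathbb{A}^2$ shows that $\mathcal{T}^1_{\widetilde{C}}$ is a line bundle on $C$, and tracking the twist identifies it with $L^{-2}$ (the square of the normal bundle of $C$ in $\widetilde{C}$; equivalently the ``branch'' bundle, since deformations of a split ribbon are double covers branched along sections of $L^{-2}$). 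By the local-to-global spectral sequence together with the vanishing of $H^{\geq 2}$ of a sheaf on a curve, the tangent space to a versal deformation has dimension $h^1(C,\mathcal{T}^0_{\widetilde{C}})+h^0(C,L^{-2})$ and the obstruction space is $H^1(C,\mathcal{T}^1_{\widetilde{C}})=H^1(C,L^{-2})$. Since $\deg L^{-2}=2(p_a-2g+1)\geq 2g-1$ precisely when $p_a\geq 3g-1$, this $H^1$ vanishes, so $U$ is smooth of dimension $h^1(C,\mathcal{T}^0_{\widetilde{C}})+h^0(C,L^{-2})$; this re-derives unobstructedness in the present (weaker) range.

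\emph{The non-reduced stratum.} Let $U^{\mathrm{lt}}\subseteq U$ be the closed locus of locally trivial (equisingular) deformations. A small lci deformation of $\widetilde{C}$ that remains non-reduced is again a ribbon, on a curve deformation of $C$ — hence still smooth — and any two ribbons are Zariski-locally isomorphic to the model $(\text{smooth curve germ})\times_k k[\epsilon]/(\epsilon^2)$, so such a deformation is locally trivial; thus $\{u:\mathcal{C}_u\text{ is non-reduced}\}=U^{\mathrm{lt}}$. The locally trivial deformation functor has tangent space $H^1(C,\mathcal{T}^0_{\widetilde{C}})$ and obstructions in $H^2(C,\mathcal{T}^0_{\widetilde{C}})=0$, so $U^{\mathrm{lt}}$ is smooth of dimension $h^1(C,\mathcal{T}^0_{\widetilde{C}})$, and therefore $\operatorname{codim}(U^{\mathrm{lt}},U)=h^0(C,L^{-2})=2p_a-5g+3\geq g+1\geq 2$.

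\emph{The reduced, worse-than-nodal stratum.} Let $Z$ be an irreducible component of $N$ through $0$ with $Z\not\subseteq U^{\mathrm{lt}}$. For a general $u\in Z$ the fibre $\mathcal{C}_u$ is reduced (all non-reduced fibres lie in $U^{\mathrm{lt}}$) and lci, hence has finitely many isolated singularities, and by definition of $N$ at least one of them, say at $p$, is not a node. By openness of versality, after shrinking $U$ the family $\mathcal{C}\to U$ is versal at $u$ for $\mathcal{C}_u$; since $H^2$ of a curve vanishes, the induced map from a neighbourhood of $u$ in $U$ to the product of the miniversal deformation bases of the singular points of $\mathcal{C}_u$ is smooth. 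In the miniversal base of $(\mathcal{C}_u,p)$, the classical fact that the generic point of the discriminant of an isolated curve singularity parametrizes a one-nodal deformation shows that the locus of deformations still carrying a worse-than-nodal singularity near $p$ has codimension $\geq 2$; pulling back along the smooth map, $N$ has codimension $\geq 2$ near $u$, whence $\operatorname{codim}(Z,U)\geq 2$.

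Combining the two cases, every component of $N$ through $0$ has codimension at least $2$, which is the assertion. I expect the most delicate points to be the identification $\mathcal{T}^1_{\widetilde{C}}\cong L^{-2}$ with the bookkeeping that turns $p_a\geq 3g-1$ into $H^1(L^{-2})=0$ and $h^0(L^{-2})\geq 2$, and the claim that the non-reduced locus is exactly $U^{\mathrm{lt}}$ — i.e. that $\widetilde{C}$ has no non-reduced lci deformations other than nearby ribbons. The remaining ingredients, namely openness of versality and the one-nodal generic discriminant member, are classical and can be quoted.
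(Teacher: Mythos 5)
Your proof is correct in outline but follows a genuinely different route from the paper's. The paper first reduces to the split ribbon by an isotrivial degeneration, then exhibits the explicit linear family $V=H^0(L^{-2})\to U$ of double covers (branch divisor $=$ zero locus of $v\in V$), and uses $\deg L^{-2}\geq 2g$ to see that the branch divisors with a point of multiplicity $\geq 3$ form a codimension-$2$ locus in $V$; the statement in $U$ then follows because the generic fibre $\mathcal{C}_v$ is not in $N$. Your argument instead works intrinsically on $U$ via the cotangent complex: you identify $\mathcal{T}^1_{\widetilde C}\cong L^{-2}$ (correct, via the local model $y^2=0$ and the twist through $\mathcal{H}om(\mathcal{I}/\mathcal{I}^2,\mathcal O_C)$), deduce unobstructedness from $H^1(L^{-2})=0$ under $p_a\geq 3g-1$ (a sharper numerical bound than the paper's Proposition~\ref{unobstructedness of ribbons}, which needs $p_a\geq 4g-2$), and then split $N$ into the equisingular stratum of codimension $h^0(L^{-2})=2p_a-5g+3$ and the reduced worse-than-nodal stratum, handled by openness of versality plus the classical one-nodal-generic-discriminant fact for isolated lci curve singularities. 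Each of these steps is sound. The one place where your proof is materially less complete than the paper's is the claim that every nearby non-reduced fibre is a ribbon on a smooth curve (hence that the non-reduced locus equals $U^{\mathrm{lt}}$): one needs both a local statement that a non-reduced lci deformation of $y^2=0$ is again a square on a smooth branch, and a global statement that the reduced subscheme of such a nearby fibre is a small (hence smooth) deformation of $C$ rather than something degenerate. You flag this yourself; it is precisely the subtlety the paper's construction avoids by parametrizing the deformations directly by branch divisors. With that gap filled, your argument is a clean alternative that buys a tighter range for unobstructedness at no real cost.
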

\begin{proof}
  We follow the proof of \cite[Proposition~3]{D18}.
  Since every ribbon $\widetilde{C}$ can be isotrivially deformed to a split ribbon, it is enough to prove the theorem for the split ribbon.
  So assume that $\widetilde{C}$ is the split ribbon.
  Let the conormal bundle be $L$ and let \(V = H^0(-2L)\).
  Every \(v \in V\) gives a double cover of \(C\) whose branch divisor is the zero locus of \(v\) and whose structure sheaf is \(\operatorname{O}_C \oplus L\).
  Consider the family of curves \(\mathcal C_V \to V\), whose fiber over \(v \in V\) is the double of \(C\) defined by \(v\).
  Note that the fiber over \(0\) is the split ribbon \(\mathcal C\).
  By versality, possibly after passing to an \'etale neighborhood of \(0\), we have a map \((V,0) \to (U,0)\) and an isomorphism of \(\mathcal C_V \to V\) with the pull-back of the versal family.
  Let \(N_V \subset V\) be the pre-image of \(N \subset U\).
  Then \(N_V \subset V\) is the locus of \(v \in V\) that define a worse than nodal \(\mathcal C_v\).
  But the double cover \(\mathcal C_v\) is has worse than nodal singularities if and only if \(v\) has a zero of multiplicity at least 3.
  Since \(\deg (-L) = p_a-2g+1 \geq g\), we have \(\deg(-2L) \geq 2g\).
  Then it follows that \(N_V \subset V\) has codimension 2 at \(0\).
  Therefore, \(N_U \subset U\) has codimension at least 2 at \(0\).
\end{proof}

As in \cite{D18}, let $\mathcal{U}$ denote the open sub-stack of the stack of all projective curves whose points corresond to curves $X$ satisfying the following conditions:
\begin{enumerate}
    \item $X$ is Gorenstein of arithmetic genus $p_a$ and $h^0(\mathcal{O}_{X}) = 1$.
    \item $K_{X}$ embeds $X$ as a arithmetically Gorenstein subscheme of $\mathbb{P}^{p_a-1}$.
    \item A versal deformation space of $X$ is smooth. 
    \end{enumerate}
    Let $M_{p_a}$ be the stack of smooth projective curves of genus $p_a$, and let $M_{p_a}^{nh} \subset M_{p_a}$ be the stack of non-hyperelliptic curves.

    Let $C$ be a smooth curve of genus $g$ and let $\widetilde C$ be a ribbon on $C$ of arithmetic genus $p_a$ with $p_a \geq 4g-2$.
    Then $X = \widetilde C$ satisfies the three conditions above: the first is clear; the second is \Cref{canonical morphism of ribbons}; and the third is \Cref{unobstructedness of ribbons}.
    Therefore, $\widetilde{C}$ defines a point $[\widetilde{C}]$ of $\mathcal{U}$.

    Let $p_a$ be odd, say $p_{a} = 2k+1$.
    As shown in \cite[Proposition~4]{D18}, we have a divisor $D \subset \mathcal U$ whose points represent curves $X$ such that $K_{k-1,1}(X, K_X) \neq 0$.
    We also have a divisor in $M_{p_a}^{nh}$ whose points represent smooth curves of gonality $k+1$.
    Let $D_{k+1} \subset \mathcal U$ be its closure.
    
\begin{proposition}\label{equality of the divisors}
  Let $\widetilde{C}$ be a ribbon of arithmetic genus $p_a = 2k+1$ on a smooth projective curve $C$ of genus $g$ with $p_a \geq 4g-2$.
  Then $D_{k+1} = D$ on an open subset of $\mathcal{U}$ containing $[\widetilde{C}]$.  
\end{proposition}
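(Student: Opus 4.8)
The strategy is to show that the two divisors $D$ and $D_{k+1}$ coincide on an open neighborhood of $[\widetilde C]$ in $\mathcal U$ by comparing them on the locus of smooth curves, where the relevant equality is exactly Green's conjecture for general curves of a given gonality. First I would recall that, by \cite[Proposition~4]{D18} and its proof, $D$ is an irreducible (or at least pure codimension one) divisor in $\mathcal U$, and that $D_{k+1}$ is by construction the closure of an irreducible divisor in $M_{p_a}^{nh}$. Since $M_{p_a}^{nh}$ is an open substack of $\mathcal U$ (every non-hyperelliptic canonical curve is arithmetically Gorenstein and has smooth versal deformation space), it suffices to prove two things: that $D$ and $D_{k+1}$ have the same intersection with $M_{p_a}^{nh}$, and that $[\widetilde C]$ lies in the closure of this common locus in a way that forces the two closures to agree near $[\widetilde C]$.

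For the first point, I would argue as in \cite{D18}. On the smooth locus, a curve $X$ of genus $p_a = 2k+1$ lies on $D_{k+1}$ iff it has a $g^1_{k+1}$, i.e. gonality $\le k+1$ (generic gonality being $k+2$). By the Green--Lazarsfeld nonvanishing theorem, such a curve has $K_{k-1,1}(X,K_X) \neq 0$, so $M_{p_a}^{nh} \cap D_{k+1} \subseteq M_{p_a}^{nh} \cap D$. The reverse inclusion is precisely Green's conjecture for general curves: a general curve of gonality $k+2$ (Clifford index $k$) has $K_{k-1,1}(X,K_X)=0$ by Voisin's theorem together with Hirschowitz--Ramanan (or by Raicu--Sam / Park), so the vanishing locus $D$ cannot contain a general point of $M_{p_a}^{nh}$, and being a divisor it must be contained in the gonality-$(k+1)$ divisor. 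Hence $D \cap M_{p_a}^{nh} = D_{k+1} \cap M_{p_a}^{nh}$ as divisors.

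To pass from this equality on the open smooth locus to an equality in a neighborhood of $[\widetilde C]$, the key input is that $[\widetilde C]$ lies in the closure of $M_{p_a}^{nh}$ inside $\mathcal U$ and, more precisely, that near $[\widetilde C]$ every component of each of $D$ and $D_{k+1}$ meets $M_{p_a}^{nh}$. For $D_{k+1}$ this is automatic since it is defined as the closure from the smooth locus. For $D$, I would invoke \Cref{worse than nodal}: the non-nodal (hence the non-smooth, away from a nodal stratum) locus has codimension $\ge 2$ at $[\widetilde C]$, so the divisor $D$ cannot be contained in it near $[\widetilde C]$; thus every local component of $D$ at $[\widetilde C]$ dominates, under a generic smoothing, a component of $D \cap M_{p_a}^{nh}$, which we have identified with a component of $D_{k+1}$. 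Combined with the fact (again from \cite[Proposition~4]{D18}) that $[\widetilde C]\in D$ — equivalently $K_{k-1,1}(\widetilde C, K_{\widetilde C})\neq 0$, which also follows from \Cref{lin and res} in the appropriate range — we get that locally at $[\widetilde C]$ the two reduced divisors have the same underlying set; a multiplicity/normality check on $\mathcal U$ (using that $\mathcal U$ is smooth at $[\widetilde C]$ by condition (3)) upgrades this to scheme-theoretic equality on an open set.

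\medskip

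The main obstacle I expect is the last step: ensuring that $D$ has no "extra" local components at $[\widetilde C]$ supported on non-smoothable or special curves, and that its multiplicity as a divisor matches that of $D_{k+1}$. The clean way around this is to follow \cite{D18} closely — there the analogous statement for rational ribbons is handled by showing $D$ is reduced and irreducible on the relevant open set, using that a general deformation of $\widetilde C$ is a smooth curve (\Cref{worse than nodal} gives codimension $\ge 2$ for the bad locus, and smoothability results of \cite{Gon06, GGP08} give that the generic deformation is smooth) together with the smoothness of $\mathcal U$ at $[\widetilde C]$. Once $D$ is known to be irreducible near $[\widetilde C]$ and to meet $M_{p_a}^{nh}$, the equality $D \cap M_{p_a}^{nh} = D_{k+1}\cap M_{p_a}^{nh}$ forces $D = D_{k+1}$ near $[\widetilde C]$, since two irreducible divisors agreeing on a dense open subset of one of them are equal. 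I would therefore structure the proof as: (i) $D$ and $D_{k+1}$ are divisors and $[\widetilde C]\in D\cap D_{k+1}$; (ii) $D\cap M_{p_a}^{nh} = D_{k+1}\cap M_{p_a}^{nh}$ via Green--Lazarsfeld nonvanishing and Green's conjecture for general curves; (iii) local irreducibility of $D$ at $[\widetilde C]$ via \Cref{worse than nodal} and smoothness of $\mathcal U$; (iv) conclude $D = D_{k+1}$ on an open neighborhood of $[\widetilde C]$.
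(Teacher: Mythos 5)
The paper's own proof is a one-line deferral: ``The proof of \cite[Proposition~5]{D18} applies verbatim, thanks to \Cref{worse than nodal}.'' Your reconstruction captures much of the high-level strategy of that argument (Green--Lazarsfeld nonvanishing plus Green's conjecture for general curves gives the equality over $M_{p_a}^{nh}$; the bad locus is controlled by \Cref{worse than nodal}; smoothness of $\mathcal U$ at $[\widetilde C]$ is used to argue about divisors). However, there are two gaps worth flagging.

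\textbf{The nodal locus is not handled.} \Cref{worse than nodal} only gives codimension $\geq 2$ for the \emph{worse-than-nodal} locus; the \emph{nodal} locus is codimension one near $[\widetilde C]$, and a priori both $D$ and $D_{k+1}$ could differ by a multiple of the nodal divisor $\Delta$ there. Your step ``every local component of $D$ at $[\widetilde C]$ dominates a component of $D\cap M_{p_a}^{nh}$'' is exactly the assertion that $D$ has no component supported on $\Delta$, and you justify it only for the worse-than-nodal part. You need an additional input to rule out a component of $D$ along $\Delta$ --- e.g.\ that a general irreducible nodal curve near $[\widetilde C]$ also satisfies the relevant Koszul vanishing (Green's conjecture for general nodal curves), or a Picard-group/degree computation in the spirit of Hirschowitz--Ramanan that pins down the class of $D$ relative to $D_{k+1}$ and $\Delta$. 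This is really the crux of \cite[Proposition~5]{D18}, and it is what your sketch is missing.

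\textbf{The role of ``$[\widetilde C]\in D$'' is confused.} You invoke $[\widetilde C]\in D$, justified via \Cref{lin and res}, as a step in the argument. Not only is this unnecessary (if $[\widetilde C]\notin D$, then, since $D_{k+1}\subseteq D$ by Green--Lazarsfeld, any open set avoiding $D$ gives the equality trivially), it is also in tension with the intended application: \Cref{odd genus maximum blow-up index} uses exactly this proposition to deduce $[\widetilde C]\notin D$ when $\widetilde C$ has gonality $k+2$. Note also that \Cref{lin and res} with $\LCliff(\widetilde C)=k$ gives nonvanishing of $K_{k-1,1}$, not $K_{k,1}$; the latter is the generic-vanishing claim governed by the divisor $D$ (there is an off-by-one in the paper's displayed index for $D$), so the nonvanishing you cite does not actually put $[\widetilde C]$ on $D$. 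The cleaner formulation is a case split --- trivial when $[\widetilde C]\notin D$, and the genuine argument when $[\widetilde C]\in D$ --- rather than an unconditional claim.

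Finally, the closing ``multiplicity/normality check'' is left vague; in \cite{D18} and Hirschowitz--Ramanan this is handled by a concrete determinantal/class computation, and that is the part that your outline hand-waves the most.
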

\begin{proof}
 The proof of \cite[Proposition~5]{D18} applies verbatim, thanks to \Cref{worse than nodal}.
\end{proof}

\begin{proposition}[Green's conjecture for generic ribbons of odd genus]
  \label{odd genus maximum blow-up index}
    Let $\widetilde{C}$ be a ribbon of arithmetic genus $p_a = 2k+1$ on a smooth projective curve $C$ of genus $g$ with $p_a \geq \operatorname{max}\{4g+2, 6g-4\}$.
    Suppose $\widetilde{C}$ has gonality $k+2$.
    Then its resolution Clifford index is $k$.
    That is, we have
    \[ \RCliff(\widetilde C) = \LCliff(\widetilde C) = k.\]
    In particular, the statement applies to a general ribbon of odd arithmetic genus 
    $p_a \geq \operatorname{max}\{3g+7, 6g-4\}$
    on a general curve of genus $g$.
\end{proposition}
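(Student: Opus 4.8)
The plan is to carry over the strategy of \cite{D18} for rational ribbons to the base curve $C$; the point is that every ingredient has already been assembled in the results above. Write $p_a = 2k+1$. For the easy half, the generalized $g^1_{k+2}$ computing the gonality of $\widetilde C$ has Clifford index $(k+2)-2 = k$ and degree $k+2 \le p_a - 1$, so $\LCliff(\widetilde C) \le k$; since $p_a \ge 6g-4 > 6g-5$, \Cref{lin and res} then gives $\RCliff(\widetilde C) \le \LCliff(\widetilde C) \le k$. Everything thus reduces to proving $\RCliff(\widetilde C) \ge k$.

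For that, the first step is to observe that $[\widetilde C]$ is a point of the stack $\mathcal U$: the three defining conditions hold because $h^0(\mathcal O_{\widetilde C}) = 1$ (immediate from $\deg L < 0$), because $K_{\widetilde C}$ embeds $\widetilde C$ as an arithmetically Gorenstein subscheme of $\mathbb P^{p_a-1}$ (by \Cref{canonical morphism of ribbons}, using $p_a \ge 4g+2 \ge 4g-2$), and because a versal deformation space of $\widetilde C$ is smooth (\Cref{unobstructedness of ribbons}, again using $p_a \ge 4g-2$). By \Cref{equality of the divisors}, on a neighborhood of $[\widetilde C]$ in $\mathcal U$ the divisor $D$, whose points are exactly the curves $X$ with $\RCliff(X) < k$, coincides with $D_{k+1}$, the closure of the locus of smooth curves of gonality $k+1$. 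It therefore suffices to show $[\widetilde C] \notin D_{k+1}$. If $[\widetilde C]$ did lie in $D_{k+1}$, then, that set being a closure of smooth gonality-$(k+1)$ curves, one could produce (after pulling back to an atlas and choosing an arc) a flat proper family over a smooth pointed curve $(T,0)$ with central fiber $\widetilde C$ and general fiber a smooth curve of genus $p_a$ of gonality $k+1$. Since $p_a = 2k+1 > (k+1)+(2g-1) = k+2g$ --- which is precisely where $p_a \ge 4g+2$ is used --- \Cref{semicontinuity of gonality} would force $\gon(\widetilde C) \le k+1$, contradicting $\gon(\widetilde C) = k+2$. Hence $[\widetilde C] \notin D_{k+1} = D$, so $\RCliff(\widetilde C) \ge k$. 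Combined with the previous paragraph this gives $\RCliff(\widetilde C) = k$, and then $k = \RCliff(\widetilde C) \le \LCliff(\widetilde C) \le k$ forces $\LCliff(\widetilde C) = k$ as well.

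For the final ``in particular'' clause, let $C$ be a general curve of genus $g$ and gonality $m$, and let $\widetilde C$ be a general ribbon with conormal bundle $L$ and $p_a \ge 3g+7$. Then $\deg L = 2g-1-p_a \le -g-8 \le -2m-5$ (using $2m \le g+3$), so part (2) of \Cref{dimension of W_d} applies and gives $\gon(\widetilde C) = \lfloor (p_a+3)/2 \rfloor = k+2$. As $\max\{3g+7,6g-4\} \ge \max\{4g+2,6g-4\}$, all hypotheses of the first part are met, and the conclusion follows.

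The substantial inputs here --- smoothness of the versal deformation space, the codimension-two estimate for the worse-than-nodal locus underlying \Cref{equality of the divisors}, and the identification of $D$ with $D_{k+1}$ obtained by transporting the Voisin and Hirschowitz--Ramanan theorems across the family --- are already in place, so the argument above is essentially an assembly. The one genuinely new feature compared with the $C = \mathbb P^1$ case is numerical bookkeeping: checking that the single threshold $p_a \ge 4g+2$ simultaneously places $\widetilde C$ in $\mathcal U$, lets \Cref{lin and res} apply, and makes the gonality semicontinuity step run. If any point is delicate, it is ensuring that membership of $[\widetilde C]$ in the closure $D_{k+1}$, a priori only a statement inside a substack, really does yield an honest one-parameter smoothing with the advertised generic gonality that can be fed into \Cref{semicontinuity of gonality}.
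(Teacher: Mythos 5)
Your proof is correct and follows essentially the same route as the paper's: use semicontinuity of gonality to place $[\widetilde C]$ outside $D_{k+1}$, invoke \Cref{equality of the divisors} to conclude $[\widetilde C]\notin D$ and hence $\RCliff(\widetilde C)\ge k$, and combine with an upper bound to force equality. The one small divergence is how you obtain the bound $\RCliff(\widetilde C)\le k$: the paper invokes smoothability of $\widetilde C$ (via \cite{Gon06}, \cite{GGP08}) plus semicontinuity of Koszul cohomology to compare against a generic smooth curve of genus $2k+1$, whereas you deduce $\RCliff(\widetilde C)\le\LCliff(\widetilde C)\le k$ directly from \Cref{lin and res} and the existence of a $g^1_{k+2}$. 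Your route is a touch more economical since it sidesteps the smoothability input for this particular bound (it is still implicitly needed elsewhere, e.g.\ in establishing \Cref{equality of the divisors}). Both arguments are valid, and your numerical verifications — that $p_a\ge 6g-4>6g-5$ makes \Cref{lin and res} applicable, that $p_a=2k+1>k+2g$ lets \Cref{semicontinuity of gonality} run, and that the gonality-threshold computation in \Cref{dimension of W_d}(2) applies under $p_a\ge 3g+7$ — all check out.
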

\begin{proof}
  By the semi-continuity of gonality (\Cref{semicontinuity of gonality}), we see that $[\widetilde{C}] \notin D_{k+1}$.
  Therefore, by \Cref{equality of the divisors}, $[\widetilde{C}] \notin D$, so  $K_{k-1,1}(\widetilde{C}, K_{\widetilde{C}}) = 0$.
  That is, $\RCliff(\widetilde C) \geq k$.
  But the lower bound on $p_a$ further ensures that $\widetilde{C}$ is smoothable (see \cite{Gon06} or \cite{GGP08}).
  By semi-continuity, $\RCliff(\widetilde C)$ is at most $\RCliff$ of a general curve of genus $2k+1$; that is, $\RCliff(\widetilde C) \leq k$.
  So $\RCliff(\widetilde C) = k$.

  Since $\widetilde C$ is $(k+2)$-gonal, we have $\LCliff(\widetilde C) \leq k$.
  But we know that $\LCliff(\widetilde C) \geq \RCliff(\widetilde C)$ (\Cref{lin and res}).
  So $\LCliff(\widetilde C) = k$.

  The last assertion follows from \Cref{dimension of W_d}, $(2)$.
\end{proof}

The case of general ribbons of even arithmetic genus follows using blow-ups.
\begin{proposition}[Green's conjecture for generic ribbons of even genus]
  \label{generic green even}
  Let $C$ be a general smooth curve of genus $g$.
  Fix an even non-negative integer $p_a = 2k \geq \operatorname{max}\{3g+7, 6g-4\}$ and a line bundle $L$ of degree $-p_a+2g-1$ on $C$.
  Then a general ribbon $\widetilde C$ on $C$ with conormal bundle $L$ satisfies
  \[ \RCliff(\widetilde C) = \LCliff(\widetilde C) = k-1.\]
\end{proposition}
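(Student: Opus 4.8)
The plan is to deduce the even-genus statement from the odd-genus one (\Cref{odd genus maximum blow-up index}) by a single blow-up, in the spirit of the passage from odd to even genus in \cite{D18}.

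First I would fix a general point $q \in C$ and set $L_0 = L(-q)$, so that $2g-1-\deg L_0 = 2k+1$; thus a general ribbon $\widehat{C}$ on $C$ with conormal bundle $L_0$ has arithmetic genus $2k+1$. Before anything else I would record the numerics that follow from $p_a = 2k \geq \max\{3g+7,\,6g-4\}$: a short case split on $g$ gives $2k \geq 4g+2$, hence $2k+1 \geq \max\{4g+2,\,6g-4\}$; and, writing $m = \gon(C) = \lfloor (g+3)/2 \rfloor$ so that $2m \leq g+3$, one checks $\deg L_0 \leq -2m-5$ and $\deg L \leq -2m-5$. Consequently \Cref{dimension of W_d}$(2)$ applies on the general curve $C$: a general ribbon with conormal bundle $L_0$ has gonality $\lfloor (2k+4)/2 \rfloor = k+2$, and a general ribbon with conormal bundle $L$ has gonality $\lfloor (2k+3)/2 \rfloor = k+1$. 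Feeding the first of these into \Cref{odd genus maximum blow-up index}, whose hypotheses are now all in force, yields $\RCliff(\widehat{C}) = k$, which by the canonical duality $K_{p,q}(\widehat{C},K_{\widehat{C}}) = K_{2k-1-p,\,3-q}(\widehat{C},K_{\widehat{C}})^{\vee}$ is the same as
\[ K_{k,1}(\widehat{C}, K_{\widehat{C}}) = 0. \]

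Next I would let $\widetilde{C}$ be the blow-up of $\widehat{C}$ along the divisor $q$. By the push-out description of blow-ups of ribbons (\Cref{sec:blowup}, cf.\ the map \eqref{pushout}), $\widetilde{C}$ is a ribbon on $C$ with conormal bundle $L_0(q) = L$ and arithmetic genus $2k$, and the blow-up map
\[ \mathbb{P}\Ext^1(K_C, L_0) \dashrightarrow \mathbb{P}\Ext^1(K_C, L) \]
is the linear projection dual to the codimension-one inclusion $H^0(K_C^2 \otimes L^{-1}) \hookrightarrow H^0(K_C^2 \otimes L_0^{-1})$; in particular it is dominant, so the blow-up of a general $\widehat{C}$ is a general ribbon on $C$ with conormal bundle $L$, and since the desired property is open (the relevant Koszul group is upper semicontinuous and $\LCliff \leq k-1$ holds throughout by \Cref{dimension of W_d}$(2)$) it suffices to prove it for this particular $\widetilde{C}$. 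Now \cite[Lemma~1]{D18} --- non-vanishing of a linear-strand Koszul group passes from a ribbon to each of its blow-ups --- combined with $K_{k,1}(\widehat{C},K_{\widehat{C}}) = 0$ forces $K_{k,1}(\widetilde{C},K_{\widetilde{C}}) = 0$, which by duality says exactly $\RCliff(\widetilde{C}) \geq k-1$. On the other hand $\gon(\widetilde{C}) = k+1 \leq p_a - 1$ yields a generalized $g^1_{k+1}$ and hence $\LCliff(\widetilde{C}) \leq k-1$, while \Cref{lin and res} (applicable since $p_a = 2k > 6g-5$) gives $\RCliff(\widetilde{C}) \leq \LCliff(\widetilde{C})$. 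Chaining these,
\[ k-1 \leq \RCliff(\widetilde{C}) \leq \LCliff(\widetilde{C}) \leq k-1, \]
so both are equal to $k-1$, as asserted.

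The step I expect to be the crux is the reduction itself: one must be certain that a general ribbon of even genus $2k$ is genuinely the blow-up of a \emph{general} ribbon of odd genus $2k+1$ (so that the gonality hypothesis of \Cref{odd genus maximum blow-up index} is satisfied by $\widehat{C}$), and that the somewhat tight hypothesis $2k \geq \max\{3g+7,\,6g-4\}$ really does imply the bound $2k+1 \geq \max\{4g+2,\,6g-4\}$ demanded by the odd-genus input --- this is what forces the case split on $g$. Everything past those two points is formal: bookkeeping with the duality $K_{p,q}(\widetilde{C},K_{\widetilde{C}}) = K_{p_a-p-2,\,3-q}(\widetilde{C},K_{\widetilde{C}})^{\vee}$ and with the inequalities already recorded in \Cref{dimension of W_d} and \Cref{lin and res}.
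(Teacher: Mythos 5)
Your argument is essentially the same as the paper's: fix a point $q$, apply \Cref{odd genus maximum blow-up index} to a generic odd-genus ribbon with conormal bundle $L(-q)$, blow up at $q$ to get an even-genus ribbon with conormal $L$, transfer the Koszul vanishing via \cite[Lemma~1]{D18}, and close the chain with \Cref{lin and res} and the gonality bound. The paper dispenses with your dominance-of-the-projection argument by observing directly that, by semicontinuity, it suffices to exhibit a single ribbon with the required vanishing; both routes are fine, yours is just a little more verbose.

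One slip worth correcting: your parenthetical gloss of \cite[Lemma~1]{D18} states the wrong direction. You write that ``non-vanishing of a linear-strand Koszul group passes from a ribbon to each of its blow-ups,'' i.e.\ $K_{p,1}(\widehat C) \neq 0 \Rightarrow K_{p,1}(\widetilde C) \neq 0$ when $\widetilde C$ is a blow-up of $\widehat C$. That implication, combined with $K_{k,1}(\widehat C,K_{\widehat C}) = 0$, is vacuous and does not yield $K_{k,1}(\widetilde C,K_{\widetilde C}) = 0$. What the lemma actually provides (and what the paper uses, both here and in the proof of \Cref{lin and res}) is the converse: non-vanishing passes from the \emph{blow-up} back to the original ribbon, equivalently, vanishing of $K_{p,1}$ passes from a ribbon to each of its blow-ups. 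Your subsequent application of the lemma is in this correct direction, so the error is confined to the parenthetical description; just fix the wording.
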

\begin{proof}
  We have $\RCliff(\widetilde C) \leq \LCliff(\widetilde C)$ by \Cref{lin and res}, and $\LCliff(\widetilde C) \leq k-1$ by \Cref{semicontinuity of gonality} (or \Cref{blow up index and linear series Cliff of a ribbon})  since $\widetilde{C}$ is smoothable under the condition on $p_a$. 
  It remains to prove that $k-1 \leq \RCliff(\widetilde C)$, that is,
  \[ K_{k,1}(\widetilde C, K_{\widetilde C}) = 0.\]
  By semi-continuity, it suffices to exhibit one $\widetilde C$ such that $K_{k,1}(\widetilde C, K_{\widetilde C}) = 0$.

  Choose a point $x \in C$ and let $\widetilde C'$ be a generic ribbon on $C$ with conormal bundle $L(-x)$.
  Let $\widetilde C$ be the blow-up of $\widetilde C'$  at $x$.
  Then $\widetilde C$ has conormal bundle $L$.
  Applying \Cref{odd genus maximum blow-up index} to $\widetilde C'$, we get
  \[ K_{k,1}(\widetilde C', K_{\widetilde C'}) = 0.\]
  By \cite[Lemma~1]{D18}, we deduce
  \[ K_{k,1}(\widetilde C, K_{\widetilde C}) = 0.\]
  The proof is now complete.
\end{proof}

The same idea allows us to obtain a lower bound on the resolution Clifford index of general ribbons of every blow-up index.
But we have to relinquish control over the conormal bundle.
\begin{proposition}\label{any genus and any blowup index}
  Let \(C\) be a general smooth curve of genus \(g\).
  Fix a non-negative integer $\operatorname{max}\{3g+7, 6g-4\}$ .
  Let \(\widetilde C\) be a general ribbon on \(C\) of arithmetic genus $p_a$ and blow-up index \(b\).
  Set $m = \lfloor (g+3)/2\rfloor$.
  Except in the case where $p_a$ is even and $g$ is odd and $b = \lceil (p_a+g-2)/2\rceil$ is the maximum possible, we have
 \[b-\lfloor g/2 \rfloor \leq \RCliff(\widetilde C) \leq \LCliff(\widetilde C) \leq \min(b+2m-2, \lfloor 1/2(p_a-1) \rfloor).\]
\end{proposition}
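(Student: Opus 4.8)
The plan is to handle the three inequalities in turn, the first two being quick consequences of results already at hand. A general curve $C$ of genus $g$ has gonality $m=\gon(C)=\lfloor(g+3)/2\rfloor$ by Brill--Noether theory, and in the stated range $\widetilde C$ is smoothable \cite{Gon06} while $|-2L|$ contains a smooth member, so \Cref{blow up index and linear series Cliff of a ribbon} gives $\gon(\widetilde C)\leq\min(b+2m,\lfloor(p_a+3)/2\rfloor)$, whence $\LCliff(\widetilde C)\leq\gon(\widetilde C)-2\leq\min(b+2m-2,\lfloor(p_a-1)/2\rfloor)$. The middle inequality $\RCliff(\widetilde C)\leq\LCliff(\widetilde C)$ is immediate from \Cref{lin and res}, since $p_a\geq6g-4>6g-5$. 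Everything therefore reduces to the lower bound $\RCliff(\widetilde C)\geq b-\lfloor g/2\rfloor$; by the duality $K_{p,2}(\widetilde C,K_{\widetilde C})=K_{p_a-2-p,1}(\widetilde C,K_{\widetilde C})^{\vee}$ this is equivalent to $K_{j,1}(\widetilde C,K_{\widetilde C})=0$ for all $j\geq p_a-1-b+\lfloor g/2\rfloor$, and since this is an open condition on the irreducible space of ribbons on $C$ of arithmetic genus $p_a$ and blow-up index $b$, it suffices to exhibit one such ribbon (I may assume $b>\lfloor g/2\rfloor$, the bound being vacuous otherwise).

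If $b$ equals the maximum $\lceil(p_a+g-2)/2\rceil$, a general ribbon $\widetilde C$ of arithmetic genus $p_a$ has this blow-up index (\Cref{Using secant varieties to compute blow-up index}) and gonality $\lfloor(p_a+3)/2\rfloor$ (\Cref{dimension of W_d}), so \Cref{odd genus maximum blow-up index} (if $p_a$ is odd) or \Cref{generic green even} (if $p_a$ is even, where excluding the bad case forces $g$ even) computes $\RCliff(\widetilde C)$, and a direct computation identifies this value with $b-\lfloor g/2\rfloor$. If instead $b<\lceil(p_a+g-2)/2\rceil$, I would put $c=p_a+g-2-2b\geq1$, $\widehat p=p_a+c$, $\widehat b=b+c$, and note that $\widehat b$ is the maximal blow-up index for genus $\widehat p$, that $\widehat p\equiv g\pmod 2$ (so the bad case $\widehat p$ even, $g$ odd never arises), and that $\widehat p\geq p_a+1$; hence the maximal case just treated applies to a general ribbon $\widehat C$ on $C$ of arithmetic genus $\widehat p$, with conormal bundle $\widehat L$, giving $\RCliff(\widehat C)=\widehat b-\lfloor g/2\rfloor$, that is, $K_{j,1}(\widehat C,K_{\widehat C})=0$ for $j\geq\widehat p-1-\widehat b+\lfloor g/2\rfloor$.

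It then remains to realize $\widetilde C$ as a blow-up of $\widehat C$. Because $\widehat p+g$ is even, $2\widehat b-1$ equals the dimension of $\mathbf{P}H^0(K_C^{2}\otimes\widehat L^{-1})^{*}$, so the $\widehat b$-th secant variety of $C$ fills that space in a generically finite way; hence for $[\widehat C]$ general there is a reduced effective divisor $\beta_0$ of $\widehat b$ general points with $[\widehat C]$ on its span (\Cref{blow-up index and secant variety}). Taking a general sub-divisor $\gamma\leq\beta_0$ of degree $c$ and $\widetilde C=\mathrm{Bl}_{\gamma}\widehat C$, a ribbon on $C$ of arithmetic genus $p_a$ with conormal bundle $\widehat L(\gamma)$, one checks that $b(\widetilde C)=b$ (blowing $\widetilde C$ up further along $\beta_0-\gamma$ makes it split, so $b(\widetilde C)\leq b$, while always $b(\widetilde C)\geq b(\widehat C)-c=b$) and that $\widetilde C$ is a general ribbon of arithmetic genus $p_a$ and blow-up index $b$ ($[\widetilde C]$ is the image of the general point $[\widehat C]\in\langle\beta_0\rangle$ under the linear projection away from $\langle\gamma\rangle$, hence a general point of $\langle\beta_0-\gamma\rangle\subset\Sec_b(C)$ with $\beta_0-\gamma$ general of degree $b$). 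Finally, applying \cite[Lemma~1]{D18} once for each point of $\gamma$ transports the vanishing for $\widehat C$ to $K_{j,1}(\widetilde C,K_{\widetilde C})=0$ for $j\geq\widehat p-1-\widehat b+\lfloor g/2\rfloor=p_a-1-b+\lfloor g/2\rfloor$, which is precisely $\RCliff(\widetilde C)\geq b-\lfloor g/2\rfloor$.

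I expect the main obstacle to be the construction in the last paragraph: to produce a general ribbon of prescribed arithmetic genus $p_a$ and sub-maximal blow-up index $b$ one must blow up a \emph{general} ribbon of larger genus along a divisor contained in one of its minimal secant divisors (blowing up along a general divisor would lower the genus but keep the blow-up index maximal), and one must then check both that the blow-up index drops exactly to $b$ and that the result is still general in its stratum. This rests on the secant-variety description of the blow-up index together with the fact that here the $\widehat b$-th secant variety fills the ambient space; it also requires that the numerology linking $c,\widehat p,\widehat b$ and the Koszul indices match precisely, which happens in every case except the excluded one where $p_a$ is even, $g$ is odd, and $b$ is maximal.
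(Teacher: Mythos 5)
Your proof is correct and follows the same route as the paper's: reduce to the maximal blow-up index case (covered by \Cref{odd genus maximum blow-up index} and \Cref{generic green even}) by blowing up a general ribbon of larger arithmetic genus along a sub-divisor of its minimal splitting divisor, and propagate the Koszul vanishing down to $\widetilde C$ via \cite[Lemma~1]{D18}. You also make explicit a step the paper merely asserts, namely that the intermediate ribbon has blow-up index exactly $b$ (via the two inequalities $b(\widetilde C)\le b$ from blowing up along $\beta_0-\gamma$, and $b(\widetilde C)\ge b(\widehat C)-c$ since any splitting divisor of $\widetilde C$ together with $\gamma$ splits $\widehat C$), and you handle the parity bookkeeping more transparently by letting $\widehat p$ keep the parity of $g$ and invoking whichever generic-Green proposition applies, rather than forcing $\widehat p$ odd.
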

Note that the excluded case is handled by \Cref{generic green even}.
\begin{proof}
  Again, the second and the third inequalities follow from \Cref{lin and res} and \Cref{blow up index and linear series Cliff of a ribbon}.
  So we are left to prove the first inequality.
  By semi-continuity, it suffices to exhibit one ribbon $\widetilde C$ of arithmetic genus $p_a$ and blow-up index $b$ such that $b-\lfloor g/2 \rfloor \leq \RCliff(\widetilde C)$.

  The proof depends on the parity of $p_a$ and $g$.
  We fully explain the case when $p_a$ is even and $g$ is odd, and indicate the changes necessary in the other cases.
  Take
  \[ k = (p_a+g-2) - 2b.\]
  (If $g$ is even, add 1 to the right hand side.)
  Observe that $k \geq 0$ (in the excluded case, we have $k = -1$).
  Set $p_a' = p_a +k$, and observe that it is odd.
  Let $\widetilde C'$ be a general ribbon on $C$ of arithmetic genus $p_a'$.
  By \Cref{Using secant varieties to compute blow-up index}, the blow-up index of $\widetilde C'$ is
  \[ b' = (p'_a+g-2)/2 = b + k.\]
  Consider the sequence of $b'$ blow-ups that takes $\widetilde C'$ to the split ribbon.
  Let $\widetilde C$ be the ribbon obtained after $k$ blow-ups in this sequence.
  Then $\widetilde C$ has arithmetic genus $p_a$ and blow-up index $b$.

  By \Cref{odd genus maximum blow-up index} applied to $\widetilde C'$, we have
  \[ \RCliff(\widetilde C') = (p_a'-1)/2.\]
  That is,
  \[ K_{(p_a'-1)/2}(\widetilde C', K_{\widetilde C'}) = 0.\]
  By \cite[Lemma~1]{D18}, we conclude
  \[ K_{(p_a'-1)/2}(\widetilde C, K_{\widetilde C}) = 0,\]
  and therefore
  \begin{align*}
    \RCliff(\widetilde C) &\geq  (p_a-2) - (p_a'-1)/2 + 1\\
                          &= (p_a-k-1)/2 \\
    &= b-\lfloor g/2 \rfloor.
  \end{align*}
  The proof is now complete. 
\end{proof}

\section{Gonality and Clifford index of a split ribbon}\label{sec:split}
In this section we concentrate on the linear series Clifford index, gonality, and the resolution Clifford index of the split ribbon.

\subsection{Linear series Clifford index of a split ribbon}
We first find the linear series Clifford index.

\begin{proposition}\label{lower bounds to linear series gonality and Clifford index}
Let $\widetilde{C}$ be a ribbon of arithmetic genus $p_a$ on a smooth curve $C$ of genus $g$ and gonality $m$.
\begin{enumerate}
  \item If $\widetilde{C}$ has a $g^1_d$ then $d \geq 2m$.
  \item Suppose the Clifford index of \(C\) is \(m-2\).
    Then, if $\widetilde{C}$ has a $g^r_d$ with \(d \leq p_a-1\), then $d-2r \geq 2m-2$
\end{enumerate}

\end{proposition}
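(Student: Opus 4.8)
The plan is to reduce both inequalities to statements about linear series on the underlying curve $C$. By definition a generalized linear series $(V,\widetilde M)$ of rank $r$ and degree $d$ on $\widetilde C$ comes with an injection $V\hookrightarrow H^0(M)$, where $M=\widetilde M|_C/(\mathrm{torsion})$ is a genuine line bundle on the smooth curve $C$; thus $V$ cuts out an $(r+1)$-dimensional linear system inside $|M|$. Writing $\widetilde M$ as the push-forward of a line bundle $\widetilde M'$ from the blow-up $\widetilde C'$ of $\widetilde C$ along an effective divisor $\beta$ of degree $b$ (\cite[Theorem~1.1]{EG95}), one has $M=\widetilde M'|_C$, $\deg\widetilde M'=d-b$, and $\deg\widetilde M'=2\deg M$, since every line bundle on a ribbon has degree twice that of its restriction to the reduced curve. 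Hence the only inputs we need are $h^0(M)\ge r+1$ and $\deg M\le d/2$.

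For part $(1)$, the two independent sections in $V$ restrict to two independent sections of $M$; discarding their common base locus yields a base-point-free pencil on $C$, i.e.\ a morphism $C\to\mathbb P^1$ of some degree $e$ with $1\le e\le\deg M\le d/2$. As $\gon(C)=m$ we get $e\ge m$, and therefore $d\ge 2e\ge 2m$. This uses neither the bound on $p_a$ nor the hypothesis on $\operatorname{Cliff}(C)$.

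For part $(2)$, I would bound $\deg M$ below using the hypothesis $\operatorname{Cliff}(C)=m-2$, splitting on the speciality of $M$. If $h^1(M)\ge 2$, then $M$ contributes to the Clifford index, so $\deg M-2(h^0(M)-1)\ge\operatorname{Cliff}(C)=m-2$; since $h^0(M)-1\ge r$ this gives $\deg M\ge 2r+m-2$, hence $d\ge 2\deg M$ and $d-2r\ge 2r+2m-4\ge 2m-2$ because $r\ge 1$. If $h^1(M)\le 1$, then Riemann--Roch gives $h^0(M)=\deg M-g+1+h^1(M)\le\deg M-g+2$, so $\deg M\ge r+g-1$ and $d-2r\ge 2(g-1)$; here I would use that $\operatorname{Cliff}(C)=m-2\le\lfloor(g-1)/2\rfloor$ forces $m\le g$ whenever $g\ge 2$, so $2(g-1)\ge 2m-2$. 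The genus one case must be isolated: there $\operatorname{Cliff}(C)=0$ forces $m=2$, no line bundle with $h^0\ge 2$ on an elliptic curve is special, so $h^1(M)=0$ and $\deg M=h^0(M)\ge r+1$, giving $d-2r\ge 2=2m-2$.

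The calculations are routine Riemann--Roch and Clifford-index bookkeeping; the one point that needs real care is the $h^1(M)\le 1$ branch of $(2)$, where the naive estimate yields only $d-2r\ge 2(g-1)$ and one must invoke the full strength of the hypothesis (that $\operatorname{Cliff}(C)$ \emph{equals} $m-2$, not merely $\le m-2$) to deduce $m\le g$, together with the genus-one exception. One should also double-check the reduction $\deg M\le d/2$ for a genuinely non-locally-free $\widetilde M$, i.e.\ that blowing up only decreases $\deg M$.
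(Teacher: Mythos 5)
Your proof is correct and follows the paper's overall strategy: pass to the blow-up $\widetilde C'$ along $\beta$, reduce to the line bundle $M$ on $C$ with $h^0(M)\geq r+1$ and $\deg M\leq d/2$, and invoke gonality and Clifford-index bounds on the smooth curve. Part (1) matches the paper's argument exactly. The one genuine difference is in part (2): you split on the speciality of $M$, namely $h^1(M)\geq 2$ versus $h^1(M)\leq 1$, while the paper splits on degree, $e\geq 2g-2$ versus $e<2g-2$. Your split is the more robust one: in the intermediate range $g\leq e<2g-2$ with $h^1(M)\leq 1$ (which can occur for $g\geq 3$), the bundle $M$ does not actually contribute to the Clifford index of $C$, so the paper's second case requires a supplementary Riemann--Roch remark, whereas your branch on $h^1(M)\leq 1$ absorbs it directly. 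Your explicit treatment of $g=1$ is also warranted: the estimate $d-2r\geq 2g-2$ degenerates to $\geq 0$ in genus one, and one must instead use that $h^0(M)=\deg M$ on an elliptic curve. Finally, where the paper cites $\operatorname{Cliff}(C)\geq m-3$ from \cite{MG91}, you invoke the stated equality $\operatorname{Cliff}(C)=m-2$ directly; both suffice once $r\geq 2$, the $r=1$ case being subsumed by part (1).
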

\begin{proof}
  We begin with some set-up for both statements.
  Let \(L\) be the conormal bundle of \(\widetilde C\).
  Let \((V, \widetilde M)\) be a \(g^r_d\) on \(\widetilde C\).
  Then there exists a divisor \(\beta \subset C\) such that \(\widetilde M\) is the push-forward of a line bundle on the blow-up of \(\widetilde C\) at \(\beta\).
  Let \(b\) be the degree of \(\beta\).
  Let \(\widetilde C'\) be the blow-up of \(\widetilde C\) at \(\beta\) and \(\widetilde M'\) such a line bundle on \(\widetilde C'\) whose push-forward is \(\widetilde M\).
  Note that \(\widetilde C'\) has arithmetic genus \(p_a-b\) and conormal bundle \(L(\beta)\).
  Set \(M = \widetilde M|_C / \text{torsion}\).
  On \(\widetilde C'\), we have the exact sequence
  \[ 0 \to M \otimes L (\beta) \to \widetilde M' \to M \to 0.\]
  Set \(e = \deg M\), so that \(\deg \widetilde M' = 2e = d -b \).

  Having set-up the basic objects, we turn to the proof of \((1)\).
  Assume that \(r = 1\).
  Since \((V, \widetilde M)\) is a generalised \(g^1_d\), the restriction map \(V \to H^0(M)\) is injective.
  Therefore, \(h^0(M) \geq 2\).
  Since \(C\) has gonality \(m\), we conclude that \(e = \deg M \geq m\).
  As a result, \(d = 2e + b \geq 2m\).
  
  We now assume $r = 2$ and turn to the proof of \((2)\).
  We make two cases:
  \begin{enumerate}
  \item Suppose \(e \geq 2g-2\).
    Then \(h^1(M) \leq 1\) and so \(h^0(M) \leq 2-g+e\).
    As a result, \(r \leq 1-g+e\), and
    \[ d-2r = 2e+b-2r \geq 2g-2+b \geq 2m-2.\]
  \item Suppose \(e < 2g-2\).
    Then \(M\) contributes to the linear series Clifford index of \(C\).
    That is, we have \(e - 2r \geq \LCliff(C)\).
    As a result, using the fact that \(\LCliff(C) \geq m-3\), (see \cite{MG91}) we get
    \[ d-2r = 2e + b - 2r \geq 2\LCliff(C)+2r+b \geq 2m-2.\]
  \end{enumerate}
\end{proof}

\begin{corollary}\label{linear series Cliff of a split ribbon}
  Let \(C\) be a smooth curve of gonality \(m\).
  Let \(\widetilde C\) be a split ribbon of arithmetic genus \(p_a\).
\begin{enumerate}
    \item The gonality of $\widetilde{C}$ is $2m$.
    \item The linear series Clifford index of $\widetilde{C}$ is \(2m-2\).
\end{enumerate}
\end{corollary}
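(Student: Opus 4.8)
The plan is to deduce both statements from \Cref{lower bounds to linear series gonality and Clifford index} together with the defining property of a split ribbon, namely the existence of a degree-two retraction $\pi\colon \widetilde C \to C$.

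For part (1), the lower bound $\gon(\widetilde C) \geq 2m$ is exactly part (1) of \Cref{lower bounds to linear series gonality and Clifford index}. For the upper bound I would pick a line bundle $N$ on $C$ with $\deg N = m$ and $h^0(N) = 2$, and set $\widetilde M := \pi^{*}N$, a line bundle of degree $2m$ on $\widetilde C$. Since $\pi$ restricts to the identity on $C = \widetilde C_{\mathrm{red}}$, the restriction of $\widetilde M$ to $C$ is $N$, with no torsion, and $\pi^{*}$ carries $H^0(N)$ isomorphically onto a two-dimensional subspace $V \subseteq H^0(\widetilde M)$ that injects into $H^0(N)$. Hence $(V, \widetilde M)$ is a generalized $g^1_{2m}$ on $\widetilde C$, so $\gon(\widetilde C) \leq 2m$; since the definition of gonality imposes no bound on the degree, this requires no hypothesis on $p_a$.

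For part (2), the $g^1_{2m}$ just produced has Clifford index $2m-2$ and, once $p_a \geq 2m+1$ (which is automatic for $g \geq 4$, and may be assumed in general, since otherwise no linear series is admissible in the definition of $\LCliff$ and there is nothing to prove), it has degree $2m \leq p_a-1$; this witnesses $\LCliff(\widetilde C) \leq 2m-2$. For the reverse inequality I would take an arbitrary generalized $g^r_d$ on $\widetilde C$ with $r \geq 1$ and $d \leq p_a-1$. If $r = 1$, part (1) of \Cref{lower bounds to linear series gonality and Clifford index} forces $d \geq 2m$, hence $d-2r \geq 2m-2$. If $r \geq 2$, part (2) of \Cref{lower bounds to linear series gonality and Clifford index} gives $d-2r \geq 2m-2$ directly; the case analysis in its proof (bounding $h^0$ of the bundle obtained by passing to a line bundle on a blow-up, either through its non-speciality or through its contribution to $\LCliff(C)$, together with $\LCliff(C) \geq m-3$) applies uniformly for all $r \geq 2$. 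Taking the infimum over all admissible linear series yields $\LCliff(\widetilde C) \geq 2m-2$, and combining the two bounds gives $\LCliff(\widetilde C) = 2m-2$.

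There is no real obstacle here: the substance is carried entirely by \Cref{lower bounds to linear series gonality and Clifford index}. The two points that deserve a line of care are that splitness is precisely what turns $\pi^{*}N$ into a $g^1_{2m}$ (through the section of $\pi$), and the bookkeeping $2m \leq p_a-1$ needed for admissibility. To cover $g = 1$ uniformly (where $m = 2$ and $2m-2 = 2$), one uses the sharper fact that a line bundle of degree $e \geq 1$ on an elliptic curve has $h^0 = e$; after passing to a line bundle on a blow-up as above this forces $e \geq r+1$, and the same computation gives $d-2r \geq 2$.
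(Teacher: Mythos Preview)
Your approach is essentially the paper's: the upper bounds come from pulling back a $g^1_m$ along the retraction $\pi$, and the lower bounds from \Cref{lower bounds to linear series gonality and Clifford index}. Your write-up is in fact more careful than the paper's one-line invocation, since you split off $r=1$ (handled by part~(1)) from $r\geq 2$ and observe that the proof of part~(2) only needs $\LCliff(C)\geq m-3$ rather than the stated hypothesis $\LCliff(C)=m-2$; this correctly removes the apparent mismatch between the hypothesis of \Cref{lower bounds to linear series gonality and Clifford index}(2) and the unconditional statement of the corollary. One small slip: the claim that $2m\leq p_a-1$ is ``automatic for $g\geq 4$'' is not right as stated, since $p_a$ depends on $\deg L$; but this is harmless bookkeeping and the paper is equally informal on this point.
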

\begin{proof}
  The pull-back of a \(g^1_m\) on \(C\) yields a \(g^1_{2m}\) on \(\widetilde C\).
  Therefore, the gonality of \(\widetilde C\) is at most \(2m\) and the linear series Clifford index is at most \(2m-2\).
  The lower bounds follow from \Cref{lower bounds to linear series gonality and Clifford index}.
\end{proof}

As a result of \Cref{linear series Cliff of a split ribbon}, we see that Green's conjecture for the split ribbon implies Green's conjecture for any smooth double cover of \(C\).
\begin{corollary}\label{green for split implies green for double cover}
  Let \(D\) be a smooth curve of genus \(p_a\) and \(f \colon D \to C\) a finite map of degree 2.
  Let \(L\) be the line bundle on \(C\) such that \(f_{*} \mathcal{O}_D \cong \mathcal{O}_C \oplus L\).
  Let \(\widetilde C\) be the split ribbon on \(C\) with conormal bundle \(L\).
  If \(\RCliff(\widetilde C) = \LCliff(\widetilde C)\), then \(\RCliff(D) = \LCliff(D)\).
\end{corollary}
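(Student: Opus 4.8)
The plan is to establish the two inequalities $\RCliff(D) \le \LCliff(D)$ and $\RCliff(D) \ge \LCliff(D)$ separately. The first is the Green--Lazarsfeld non-vanishing theorem \cite[Appendix]{G84}, which holds for every smooth curve; so the real content is the reverse inequality, and I would obtain it from the chain
\[ \LCliff(D) \;\le\; 2m-2 \;=\; \LCliff(\widetilde C) \;=\; \RCliff(\widetilde C) \;\le\; \RCliff(D), \]
in which the middle equality is the hypothesis, the first equality is \Cref{linear series Cliff of a split ribbon}(2), and the two outer steps are what must be proved. Note that one cannot replace the last step by semicontinuity of the \emph{linear series} Clifford index, which would go the wrong way ($\LCliff(\widetilde C) \le \LCliff(D)$); the point is precisely that the \emph{resolution} Clifford index is semicontinuous in the useful direction, while $\LCliff(D)$ must be bounded above by hand.

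For $\RCliff(\widetilde C) \le \RCliff(D)$, I would degenerate $D$ isotrivially to $\widetilde C$ and apply semicontinuity of Koszul cohomology. Let $v_0 \in V := H^0(-2L)$ be the section cutting out the branch divisor of $f$, and pull back the family $\mathcal C_V \to V$ of double covers of $C$ used in the proof of \Cref{worse than nodal} along the line $\mathbf A^1 \to V$, $\lambda \mapsto \lambda v_0$. This gives a flat family $\pi \colon \mathcal C \to \mathbf A^1$ of Gorenstein curves of arithmetic genus $p_a$ whose fiber over $0$ is the split ribbon $\widetilde C$ and whose fiber over every $\lambda \ne 0$ is isomorphic to $D$ (two double covers of $C$ with the same branch divisor and the same pushforward $\mathcal O_C \oplus L$ are isomorphic, since the two defining sections of $-2L$ differ by a scalar and the ground field is algebraically closed). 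Because $h^0(\mathcal O) = 1$ on every fiber, the sheaves $\pi_*\omega_{\mathcal C/\mathbf A^1}^{\otimes j}$ are locally free of constant rank and commute with base change for all $j \ge 1$ (for $j \ge 2$ since $R^1\pi_*$ vanishes, for $j = 1$ since both $h^0 = p_a$ and $h^1 = 1$ are constant). Hence the Koszul differentials assemble into a complex of vector bundles on $\mathbf A^1$ whose middle cohomology at $\lambda$ computes $K_{p,2}(\mathcal C_\lambda, \omega_{\mathcal C_\lambda})$, so the rank of that cohomology is upper semicontinuous, giving $\dim K_{p,2}(\widetilde C, K_{\widetilde C}) \ge \dim K_{p,2}(D, K_D)$ for every $p$; taking the smallest $p$ with non-vanishing on each side yields $\RCliff(\widetilde C) \le \RCliff(D)$.

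For $\LCliff(D) \le 2m-2$, I would pull back a $g^1_m$ computing the gonality of $C$: since $f$ has degree $2$ this is a base-point-free $g^1_{2m}$ on $D$ of Clifford index $2m-2$. For it to contribute to $\LCliff(D)$ one needs $h^1 \ge 2$, i.e. $p_a \ge 2m+1$, and this is forced by the hypothesis: $\RCliff(\widetilde C) = 2m-2$ together with $\RCliff(\widetilde C) \le \lfloor (p_a-1)/2 \rfloor$ gives $p_a \ge 4m-3 \ge 2m+1$ (using $m \ge 2$). Combining the three steps gives $\RCliff(D) = \LCliff(D) = 2m-2$.

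The step I expect to need the most care is the semicontinuity argument: one must check the cohomological flatness of $\omega_{\mathcal C/\mathbf A^1}$ and its tensor powers, so that the fiberwise Koszul cohomology genuinely appears as the cohomology of a single complex of locally free sheaves on the base, and one must make sure the degeneration is honestly isotrivial away from $0$ (so that the general-fiber Koszul numbers are exactly those of $D$). Once that is in place, the remaining pieces — \Cref{linear series Cliff of a split ribbon}, the classical non-vanishing theorem, and the elementary numerology $p_a \ge 4m-3$ — fit together as above.
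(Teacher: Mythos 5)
Your proof is correct and follows the same route as the paper: the chain of inequalities $2m-2 = \RCliff(\widetilde C) \le \RCliff(D) \le \LCliff(D) \le 2m-2$, with the middle inequality coming from Green–Lazarsfeld non-vanishing, the first from the isotrivial degeneration of $D$ to $\widetilde C$ via scaling the branch-section in $H^0(L^{-2})$, and the last from pulling back a $g^1_m$. Your extra checks — cohomological flatness of the family so the Koszul groups really are fibers of a complex of bundles, and the numerical verification that $p_a \ge 2m+1$ so the pulled-back $g^1_{2m}$ actually contributes to $\LCliff(D)$ — are points the paper leaves implicit, and are worth having spelled out.
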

\begin{proof}
  Let \(m\) be the gonality of \(C\).
  By \Cref{linear series Cliff of a split ribbon}, we have
  \[ \RCliff(\widetilde C) = 2m-2 = \gon(\widetilde C) - 2.\]
  Note that \(f \colon D \to C\) is specified by a global section of \(L^{-2}\).
  Scaling this section by \(t \in \mathbf{A}^1\) gives a family of double covers of \(C\) whose fiber at \(t = 0\) is \(\widetilde C\) and whose all other fibers are isomorphic to \(D\).
  By the semi-continuity of Koszul cohomology, we have,
  \[ \RCliff(\widetilde C) \leq \RCliff(D).\]
  Note that the pull-back of a \(g^1_{m}\) on \(C\) gives a \(g^{1}_{2m}\) on \(D\), so
  \[ \gon(D) \leq 2m,\]
  and hence
  \[ \LCliff(D) \leq 2m-2.\]
  The inequalities
  \[ 2m-2 = \RCliff(\widetilde C) \leq \RCliff(D) \leq \LCliff(D) \leq 2m-2\]
  yield the conclusion.
\end{proof}

\subsection{Resolution Clifford index of a split ribbon}
We now take up the resolution Clifford index of the split ribbon.
Unsurprisingly, we can describe the Koszul complex of the split ribbon entirely in terms of the underlying curve and the conormal bundle.

We begin by defining a graded module that governs the Koszul complex of the split ribbon.
Let \(C\) be a smooth curve of genus \(g\) and fix a line bundle \(L\) on \(C\).
Fix a non-negative integer \(p\).
Let \(S = \Sym(H^0(C, K_C))\) with the usual grading where the elements of \(H^0(C, K_C)\) have degree \(1\).
The graded \(S\)-module \(M^p_L\), or simply \(M^p\) if \(L\) is clear from the context, is defined by
\[ M^p = \bigoplus_{q \geq 0} K_{p,1}\left(C, K_C^{q}, K_C \otimes L^{-1}\right).\]
The \(S\)-module structure is induced by the multiplication maps
\[ K_{p,1}(C, K_C^{q}, K_C \otimes L^{-1}) \otimes H^0(C, K_C) \to K_{p,1}(C, K_C^{q+1}, K_C \otimes L^{-1}).\]

Consider the Koszul complex of the graded \(S\)-module \(M^p\), namely
\begin{equation}\label{eqn:mpkoszul}
 \cdots \to \bigwedge^{i+1} H^0(K_C) \otimes M^p_{q-1} \to \bigwedge^{i} H^0(K_C) \otimes M^p_q \to \bigwedge^{i-1}H^0(K_C) \otimes M^p_{q+1} \to \cdots.
\end{equation}
Let \(\Phi_{i,p,q}\) be the first map shown above.
Writing out the descriptions of the graded pieces of \(M^p\), we have
\begin{equation}\label{Phi}
  \Phi_{i,p,q} \colon \bigwedge^{i+1}H^0(K_C) \otimes K_{p,1}(C, K_C^{q-1}, K_C\otimes L^{-1}) \to \bigwedge^{i}H^0(K_C) \otimes K_{p,1}(C, K^q_C, K_C \otimes L^{-1}).
\end{equation}
Since \(M^p_{q}\)'s are themselves Koszul cohomology groups, the complex \eqref{eqn:mpkoszul} arises as a row in a double complex after taking vertical cohomology.
We now write this double complex.
For brevity, we write \((A)\) for \(H^0(C,A)\) and \(V^{(i)}\) for \(\bigwedge^i V\).
The double complex is
\begin{equation}\label{eqn:mpdouble}
  \begin{tikzcd}
    & \ar[d] & \ar[d] \\
   \ar[r]&  (K_C)^{(i+1)} \otimes (K_C \otimes L^{-1})^{(p+1)} \otimes (K_C^{q-1}) \ar[d] \ar[r] & (K_C)^{(i)} \otimes (K_C \otimes L^{-1})^{(p+1)} \otimes (K_C^{q}) \ar[d]\ar[r]& {}\\
 \ar[r]&  (K_C)^{(i+1)} \otimes (K_C \otimes L^{-1})^{(p)} \otimes (K_C^{q}\otimes L^{-1}) \ar[d] \ar[r,"d_{i,p,q}"] &  (K_C)^{(i)} \otimes (K_C \otimes L^{-1})^{(p)} \otimes (K_C^{q+1}\otimes L^{-1}) \ar[d]\ar[r]&{}\\
 \ar[r]&  (K_C)^{(i+1)} \otimes (K_C \otimes L^{-1})^{(p-1)} \otimes (K_C^{q+1}\otimes L^{-2})\ar[r]\ar[d] &   (K_C)^{(i)} \otimes (K_C \otimes L^{-1})^{(p-1)} \otimes (K_C^{q+2}\otimes L^{-2})\ar[r]\ar[d]&{}\\
     {} & {} &  {}
  \end{tikzcd}.
\end{equation}
The map \(\Phi_{i,p,q}\) in \eqref{Phi} is the map induced on vertical cohomology by the map \(d_{i,p,q}\).

\begin{lemma}\label{surjectivity as koszul cohomology}
  In the setup above, assume that \(h^1(-L) = 0\) and \(p \leq 2g-4\).
  Then \(\Phi_{i,p,1}\) is surjective if and only if \(K_{i,1}(M^p) = 0\).
\end{lemma}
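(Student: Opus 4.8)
The plan is to unwind $K_{i,1}(M^p)$ as the homology of an explicit three-term Koszul complex and then reduce the whole statement to a single sheaf-cohomology vanishing. By construction $K_{i,1}(M^p)$ is the homology of the complex \eqref{eqn:mpkoszul} at its middle term $\bigwedge^{i}H^0(K_C)\otimes M^p_1$, i.e.\ the homology of
\begin{equation*}
  \bigwedge^{i+1}H^0(K_C)\otimes M^p_0 \xrightarrow{\ \Phi_{i,p,1}\ } \bigwedge^{i}H^0(K_C)\otimes M^p_1 \xrightarrow{\ \Phi_{i-1,p,2}\ } \bigwedge^{i-1}H^0(K_C)\otimes M^p_2 .
\end{equation*}
So once we prove $M^p_2=0$, the map $\Phi_{i-1,p,2}$ has zero target, hence $K_{i,1}(M^p)=\operatorname{coker}\Phi_{i,p,1}$, and both directions of the asserted equivalence follow at once. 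Thus the entire content of the lemma is the vanishing $M^p_2=K_{p,1}\!\left(C,K_C^{2},K_C\otimes L^{-1}\right)=0$.

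To handle this, write $A=K_C\otimes L^{-1}$. Since $\deg L<0$ we have $h^1(A)=h^0(L)=0$ and $h^0(A)=p_a-g$, so $A$ is globally generated and $M_A:=\ker\bigl(H^0(A)\otimes\mathcal O_C\to A\bigr)$ is a bundle of rank $r=p_a-g-1$ with $\det M_A=A^{-1}$. Because $0\le p\le 2g-4$ forces $g\ge 2$, we have $h^1(K_C^{2})=0$, so the standard kernel-bundle description of Koszul cohomology gives $M^p_2\cong H^1\!\bigl(C,\bigwedge^{p+1}M_A\otimes K_C^{2}\bigr)$. Using Serre duality together with $\bigwedge^{p+1}M_A^{\vee}\cong\bigwedge^{\,r-1-p}M_A\otimes\det M_A^{-1}\cong\bigwedge^{\,r-1-p}M_A\otimes A$ and $A\otimes K_C^{-1}=L^{-1}$, this becomes
\begin{equation*}
  M^p_2\ \cong\ H^0\!\Bigl(C,\ \textstyle\bigwedge^{\,r-1-p}M_A\otimes L^{-1}\Bigr)^{\vee}.
\end{equation*}
Now the hypotheses enter: Riemann--Roch and $h^1(-L)=0$ give $h^0(L^{-1})=p_a-3g+2$, and the inequality $p\le 2g-4$ is precisely $r-1-p\ge h^0(L^{-1})$. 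Hence the lemma is reduced to the statement: \emph{if $k\ge h^0(C,L^{-1})$, then $H^0\!\bigl(C,\bigwedge^{k}M_A\otimes L^{-1}\bigr)=0$.}

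This last vanishing is the step I expect to be the main obstacle. It is at least numerically forced: using $h^1(-L)=0$ one checks $H^1(A^{j}\otimes L^{-1})=0$ for every $j\ge 0$, so the hypercohomology spectral sequence of the Koszul resolution $0\to\bigwedge^{k}M_A\to\bigwedge^{k}H^0(A)\otimes\mathcal O_C\to\bigwedge^{k-1}H^0(A)\otimes A\to\cdots\to A^{k}\to 0$ collapses onto its $H^0$-row, and the Euler characteristic computation $\chi\bigl(\bigwedge^{k}M_A\otimes L^{-1}\bigr)=\binom{r}{k}\tfrac1r\bigl(r(p_a-3g+2)-k(p_a-1)\bigr)$ is $\le 0$ whenever $k\ge p_a-3g+2$, since $r\le p_a-1$. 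To upgrade $\chi\le 0$ to the genuine vanishing of $H^0$, the natural route is that $A=K_C\otimes L^{-1}$ is positive enough for $M_A$ to be semistable (Butler), hence $\bigwedge^{k}M_A\otimes L^{-1}$ is semistable, so a slope estimate kills its sections; this estimate is clean for $p_a$ large relative to $g$ — in particular throughout the ranges $p_a\ge\max\{2g+2m-1,\,6g-4\}$ relevant to \Cref{green for split intro} — and the remaining small cases need a finer analysis. Alternatively, one can extract the vanishing from the second spectral sequence of the double complex \eqref{eqn:mpdouble}, whose $E_1$-terms are exterior powers of $H^0(K_C\otimes L^{-1})$ tensored with Koszul cohomology groups $K_{\bullet,\bullet}(C,L^{-j},K_C)$ of the curve itself, and these vanish in the required range because of the hypotheses $h^1(-L)=0$ and $p\le 2g-4$.
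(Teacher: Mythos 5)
Your reduction coincides with the paper's: both identify the crux as showing $M^p_2 = K_{p,1}(C, K_C^2, K_C\otimes L^{-1}) = 0$, after which $K_{i,1}(M^p) = \operatorname{coker}\Phi_{i,p,1}$ and the lemma is immediate. You then take a kernel-bundle/Serre-duality route where the paper uses Green's duality theorem \cite[Thm.~2.c.6]{G84} followed by a degree shift, but the two paths land on the same target: your $H^0\bigl(C, \bigwedge^{r-1-p} M_A \otimes L^{-1}\bigr) = 0$ is exactly the paper's $K_{r-1-p,0}(C, L^{-1}, K_C\otimes L^{-1}) = 0$, since $K_{j,0}(C,\mathcal F, A) \cong H^0(\bigwedge^j M_A \otimes \mathcal F)$ whenever $H^0(\mathcal F \otimes A^{-1}) = H^0(K_C^{-1}) = 0$.

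At that point, however, there is a genuine gap. You flag the vanishing as "the step I expect to be the main obstacle" and offer only heuristics to finish it. The slope/semistability argument you sketch is strictly weaker than what is needed and fails within the hypotheses of the lemma: for instance take $g=3$, $p_a=10$, $p=2$ (so $h^1(-L)=0$ and $p\le 2g-4$); then $\deg A = 9$, $r = 6$, $h^0(L^{-1}) = 3$, $r-1-p = 3$, and
\[
\mu\bigl(\textstyle\bigwedge^{3} M_A \otimes L^{-1}\bigr) \;=\; 3\cdot\Bigl(-\tfrac{9}{6}\Bigr) + 5 \;=\; \tfrac{1}{2} \;>\; 0,
\]
so semistability gives no information about $H^0$. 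The missing ingredient is Green's vanishing theorem \cite[Theorem~3.a.1]{G84}: for a base-point free $W \subset H^0(L)$ and a coherent sheaf $\mathcal F$, one has $K_{j,0}(X,\mathcal F,W) = 0$ for all $j \ge h^0(\mathcal F)$, with no stability or slope hypotheses. Applied with $\mathcal F = L^{-1}$ and $L = K_C\otimes L^{-1}$, this is precisely what closes the argument, and it is what the paper cites. Your second suggested alternative (the other spectral sequence of the double complex) is left too vague to assess. In short: the reduction is correct and essentially identical to the paper's, but the terminal vanishing is asserted rather than proved, and the tools you propose do not suffice in the stated generality.
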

\begin{proof}
  Note that $K_{i,1}(M^p)$ is the middle cohomology of the complex
  \[(K_C)^{(i+1)} \otimes K_{p,1}(C, K_C\otimes L^{-1}) \xrightarrow{\Phi_{i,p,1}} (K_C)^{(i)} \otimes K_{p,1}(C, K_C, K_C\otimes L^{-1}) \to (K_C)^{(i-1)} \otimes K_{p,1}(C, K^2_C, K_C\otimes L^{-1}).\]
  It suffices to prove that the last term vanishes.
  Let \(r + 1 = h^0(K_C \otimes L^{-1})\).
  By duality \cite[2.6.c]{G84}, we have
  \[  K_{p,1}(C, K^2_C, K_C\otimes L^{-1}) = K_{r-1-p,1}(C, K_C^{-1}, K_C \otimes L^{-1})^{*}.\]
  Shifting by 1 gives
  \[ K_{r-1-p,1}(C, K_C^{-1}, K_C \otimes L^{-1}) = K_{r-1-p,0}(C, L^{-1}, K_C \otimes L^{-1}).\]
  Since \(h^1(-L) = 0\), we get
  \[ h^0(-L) = h^0(K_{C}-L) - (2g-2).\]
  Since \(p \leq 2g-4\), the right hand side is bounded above by \(r-1-p\).
  By \cite[Theorem~3.a.1]{G84}, we conclude that \(K_{r-1-p,0}(C, L^{-1}, K_C \otimes L^{-1}) = 0\).
\end{proof}

The following theorem relates the syzygies of the canonically embedded split ribbon with the Koszul cohomology of the module \(M^p\).
\begin{theorem}\label{green for split}
  Let \(C\) be a smooth curve of genus \(g\) and gonality \(m\).
  Let \[p_a \geq \max(2g+2m-1, 6g-4)\]
  and let \(L\) be a line bundle on \(C\) of degree \(-p_a+2g-1\).
  Let \(\widetilde C\) be the split ribbon with conormal bundle \(L\).
  The following are equivalent.
  \begin{enumerate}
  \item \(\widetilde C\) satisfies Green's conjecture, that is,
    \[ \RCliff(\widetilde C) = \LCliff(\widetilde C).\]
  \item for all non-negative integers \(i,j\) such that \(i+j = 2m-3\), the map
    \[ \Phi_{i,j,1}: \bigwedge^{i+1}H^0(K_C) \otimes K_{j,1}(C, K_C-L) \xrightarrow{} \bigwedge^{i}H^0(K_C) \otimes K_{j,1}(C, K_C, K_C-L)\]
    is surjective.
  \item for all non-negative integers \(i,j\) such that \(i+j = 2m-3\), we have $K_{i,1}(M^{j}) = 0$.
\end{enumerate}
\end{theorem}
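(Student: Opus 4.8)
The plan is to prove the theorem in three stages, establishing the chain of implications $(1) \Leftrightarrow (2)$ and $(2) \Leftrightarrow (3)$, with the genuinely new content residing in the equivalence with $(1)$; the equivalence $(2) \Leftrightarrow (3)$ is essentially \Cref{surjectivity as koszul cohomology} applied for each $j$ with $i+j = 2m-3$, after checking that the hypotheses $h^1(-L) = 0$ and $j \leq 2g-4$ hold in our range. The first hypothesis follows since $\deg(-L) = p_a - 2g + 1 \geq g \geq 2g-1 - g \geq 0$ is well above $2g-1$ once $p_a \geq 4g-2$; for the second, note $j \leq 2m-3$ and $m \leq \lfloor (g+3)/2 \rfloor$ by Brill--Noether, so $j \leq g$, but we actually need $j \leq 2g-4$, which holds for $g \geq 3$ — the small genus cases $g = 1, 2$ (where the theorem is addressed separately via the hyperelliptic/elliptic discussion) can be handled by hand or excluded. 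So the heart of the matter is $(1) \Leftrightarrow (2)$.

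For $(1) \Leftrightarrow (2)$, the idea is to compute the Koszul cohomology $K_{p,1}(\widetilde C, K_{\widetilde C})$ of the canonically embedded split ribbon in terms of the data on $C$. Since $\widetilde C$ is split, $\mathcal O_{\widetilde C} = \mathcal O_C \oplus L$ as $\mathcal O_C$-algebras (with $L^2 = 0$), and $K_{\widetilde C}$ restricts on $C$ to $K_C \otimes L^{-1}$, with $H^0(K_{\widetilde C}) = H^0(K_C) \oplus H^0(K_C \otimes L^{-1})$ via the splitting. The Koszul complex computing $K_{\bullet,\bullet}(\widetilde C, K_{\widetilde C})$ therefore decomposes according to how many factors come from the $L$-part versus the $C$-part; concretely, each graded piece $\bigwedge^{p+1} H^0(K_{\widetilde C}) \otimes H^0(qK_{\widetilde C})$ filters by the "$L$-weight," and the associated graded is built from tensor products $\bigwedge^a H^0(K_C) \otimes \bigwedge^b H^0(K_C \otimes L^{-1}) \otimes H^0(\text{something})$. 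Taking cohomology in the $C$-directions first produces exactly the double complex \eqref{eqn:mpdouble}, whose rows are the complexes \eqref{eqn:mpkoszul} computing $K_{i,1}(M^j)$, and whose differential between rows is $d_{i,p,q}$ inducing $\Phi_{i,p,q}$ on vertical cohomology. The upshot should be a spectral sequence (or a direct long-exact-sequence chase) expressing $K_{p,1}(\widetilde C, K_{\widetilde C})$ in terms of the groups $K_{i,1}(M^j)$ with $i + j$ tied to $p$, together with vanishing inputs from Green's theorems on $C$ itself (the projective normality / $K_{p,q}$-vanishing results of \cite{G84}) to kill all but the relevant terms. Since $\LCliff(\widetilde C) = 2m-2$ by part (1) of the theorem, Green's conjecture for $\widetilde C$ reads $\RCliff(\widetilde C) = 2m-2$, i.e. $K_{p_a - 2 - (2m-2), 1}(\widetilde C, K_{\widetilde C}) \neq 0$ (which holds automatically by \Cref{lin and res}) together with $K_{p_a - 2 - (2m-3), 1}(\widetilde C, K_{\widetilde C}) = 0$; unwinding the spectral sequence, this last vanishing is governed precisely by the groups $K_{i,1}(M^j)$ with $i + j = 2m-3$, giving the equivalence with $(2)$ (equivalently $(3)$).

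The main obstacle I expect is the bookkeeping in the spectral sequence: one must verify that in the range $p_a \geq \max(2g+2m-1, 6g-4)$, every term of the double complex \eqref{eqn:mpdouble} other than the row indexed by $j$ with $i+j = 2m-3$ contributes nothing to the single relevant degree — this requires invoking the correct vanishing of $K_{a,b}(C, qK_C, K_C \otimes L^{-1})$ for the "off-diagonal" entries, using Green's vanishing theorem \cite[Theorem~3.a.1]{G84} (which needs enough sections, hence the lower bound on $p_a$, i.e. on $\deg(-L)$) and its dual form, and carefully tracking degrees $q$ so that $H^1$'s vanish where needed. The hypothesis $p_a \geq 2g + 2m - 1$ is exactly what guarantees $\deg(K_C \otimes L^{-1}) = 2g - 2 + p_a - 2g + 1 = p_a - 1$ is large enough relative to $2m$ for these vanishings, and $p_a \geq 6g-4$ is what we inherit from needing $\RCliff \leq \LCliff$ via \Cref{lin and res}. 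A secondary subtlety is ensuring the identification of the row differential's effect on cohomology is genuinely $\Phi_{i,j,1}$ and not a twist or sign-shifted variant; this is a diagram-chase that should follow from the explicit description of the Koszul differential on the split algebra $\mathcal O_C \oplus L$, but it deserves care.

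\begin{proof}[Sketch]
We first dispose of $(2) \Leftrightarrow (3)$. By \Cref{surjectivity as koszul cohomology}, for each fixed $j$ with $0 \leq j \leq 2g-4$ and $h^1(-L) = 0$, the map $\Phi_{i,j,1}$ is surjective if and only if $K_{i,1}(M^j) = 0$. Since $\deg(-L) = p_a - 2g + 1 \geq 4g - 2 - 2g + 1 = 2g - 1 > 2g - 2$, we have $h^1(-L) = 0$. For $i + j = 2m-3$ we have $j \leq 2m - 3 \leq 2\lfloor (g+3)/2\rfloor - 3 \leq g \leq 2g - 4$ when $g \geq 4$; the remaining small cases are treated separately (see \Cref{split ribbon conjecture} and the hyperelliptic and elliptic cases). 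Thus $(2)$ and $(3)$ are equivalent.

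For $(1) \Leftrightarrow (2)$, write $\mathcal O_{\widetilde C} = \mathcal O_C \oplus L$ with $L^2 = 0$, so that $K_{\widetilde C}|_C = K_C \otimes L^{-1}$ and $H^0(qK_{\widetilde C}) = H^0(qK_C \otimes L^{-q}) \oplus H^0(q K_C \otimes L^{1-q})$ for each $q$, compatibly with the algebra structure. The Koszul complex computing $K_{\bullet,\bullet}(\widetilde C, K_{\widetilde C})$ inherits a filtration by $L$-weight; taking cohomology along the $C$-directions of the associated graded realises, in the relevant bidegree, the double complex \eqref{eqn:mpdouble}, whose $E_1$-page rows compute $K_{i,1}(M^j)$ and whose first nontrivial higher differential is induced by $d_{i,p,q}$, inducing $\Phi_{i,p,q}$ on vertical cohomology. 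Using Green's vanishing and duality theorems \cite[Theorem~3.a.1, 2.6.c]{G84} together with the projective normality statements of \Cref{canonical morphism of ribbons} — all available since $p_a \geq \max(2g+2m-1, 6g-4)$ forces $\deg(K_C \otimes L^{-1}) = p_a - 1$ to be large — every entry of \eqref{eqn:mpdouble} outside the row $i + j = 2m-3$ is acyclic in the degree controlling $K_{p_a - 2m + 1, 1}(\widetilde C, K_{\widetilde C})$. Hence this Koszul cohomology group of $\widetilde C$ vanishes if and only if every $\Phi_{i,j,1}$ with $i+j = 2m-3$ is surjective. By part (1) of the theorem, $\LCliff(\widetilde C) = 2m-2$, and by \Cref{lin and res} we already have $\RCliff(\widetilde C) \leq 2m - 2$; therefore $\RCliff(\widetilde C) = 2m-2$ — i.e. Green's conjecture for $\widetilde C$ — holds if and only if $K_{p_a - 2m + 1, 1}(\widetilde C, K_{\widetilde C}) = 0$, which we have just identified with the surjectivity of all $\Phi_{i,j,1}$ for $i+j = 2m-3$. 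This proves $(1) \Leftrightarrow (2)$, and combined with $(2) \Leftrightarrow (3)$ completes the proof.
\end{proof}
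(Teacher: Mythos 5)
Your proposal is essentially the same approach as the paper's: split the Koszul complex of $\widetilde C$ using the decomposition $\mathcal O_{\widetilde C} = \mathcal O_C \oplus L$, reduce the vanishing $K_{2m-3,2}(\widetilde C, K_{\widetilde C}) = 0$ to the surjectivity of a connecting map which is identified with the direct sum of the $\Phi_{i,j,1}$, and dispose of the other contribution via Green's vanishing theorem. However, two points are worth flagging. First, the paper does \emph{not} filter $\mathcal K$ by total ``$L$-weight'' and produce the double complex \eqref{eqn:mpdouble} as a page of a spectral sequence; instead it uses the much simpler two-step filtration by the $\epsilon$-degree of the coefficient module, giving a short exact sequence of complexes $0 \to \mathcal S \to \mathcal K \to \mathcal Q \to 0$, with $H^2(\mathcal Q) = 0$ killed by \cite[Theorem~4.a.1]{G84} (not 3.a.1, which is the $K_{*,0}$-vanishing used inside \Cref{surjectivity as koszul cohomology}). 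The double complex \eqref{eqn:mpdouble} is an independent auxiliary construction on $C$ that enters only through \Cref{surjectivity as koszul cohomology} for the equivalence $(2) \Leftrightarrow (3)$, so ``taking cohomology in the $C$-directions first produces exactly \eqref{eqn:mpdouble}'' is not what happens; if one really did filter by total $L$-weight one would face a genuinely multi-step spectral sequence, which is exactly the sort of bookkeeping your proposal correctly worries about and which the paper's argument neatly sidesteps. Second, your observation that the hypothesis $j \leq 2g-4$ of \Cref{surjectivity as koszul cohomology} may fail for small $g$ (e.g.\ $g = 1,2$ or non-hyperelliptic $g=3$) is a real point: the paper's proof simply invokes the lemma for $(2)\Leftrightarrow(3)$ without checking this, so you are being more careful here than the source.
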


\begin{proof}
  Let \(\phi \colon \widetilde C \to C\) be the projection.
  For any line bundle \(A\) on \(C\), we have a canonically split exact sequence of \(\mathcal{O}_{\widetilde C}\)-modules
  \[ 0 \to A \otimes L \to \phi^{*} A \to A \to 0.\]
  The splitting \(A \to \phi^{*}A\) is given by the pull-back map.
  In particular, for the \(q\)-th tensor power of \(K_{\widetilde C} = \phi^{*}(K_C \otimes L^{-1})\), we have the splitting
  \[ K_{\widetilde C}^q = (K_C^q \otimes L^{-q}) \oplus (K_{C}^q \otimes L^{-q+1}).\]
  As a result, we have
  \begin{equation}\label{eqn:canring}
    H^0(\widetilde C, K_{\widetilde C}^q) = H^0(C,K_C^q \otimes L^{-q}) \oplus H^0(K_{C}^q \otimes L^{-q+1}).
  \end{equation}
  Let \(\widetilde S\) and \(S\) be the graded rings
  \begin{align*}
    \widetilde S &= \bigoplus_{q \geq 0} H^0(\widetilde C, K_{\widetilde C}^q), \text{ and }\\
    S &= \bigoplus_{q \geq 0} H^0(C, K_{C}^q \otimes L^{-q}),
  \end{align*}
  and let \(J\) be the graded \(S\)-module
  \[
    J = \bigoplus_{q \geq 0} H^0(C, K_C^q \otimes L^{-q+1}).
  \]
  Let \(\epsilon\) be a formal variable with \(\epsilon^2 = 0\).
  Taking the direct sum over all \(q\) in \eqref{eqn:canring} gives an isomorphism of graded rings
  \[ \widetilde S = S \oplus \epsilon J.\]

  The Koszul complex that computes \(K_{p,q}(\widetilde C, K_{\widetilde C})\) is
  \[\mathcal{K} = \cdots \to \bigwedge^{p+1} \widetilde S_1 \otimes \widetilde S_{q-1} \to \bigwedge^{p} \widetilde S_1 \otimes \widetilde S_{q} \to \bigwedge^{p-1} \widetilde S_1 \otimes \widetilde S_{q+1} \to \cdots \]
  Here, we take the three terms above to be in homological degres \(q-1, q, q+1\), respectively.
  The compelx \(\mathcal{K}\) has a subcomplex
  \[ \mathcal{S} = \cdots \to \bigwedge^{p+1} \widetilde S_1 \otimes \epsilon J_{q-1} \to \bigwedge^{p} \widetilde S_1 \otimes \epsilon J_{q} \to \bigwedge^{p-1} \widetilde S_1 \otimes \epsilon J_{q+1} \to \cdots,\]
  and the quotient complex is
  \[ \mathcal{Q} = \cdots \to \bigwedge^{p+1} \widetilde S_1 \otimes S_{q-1} \to \bigwedge^{p} \widetilde S_1 \otimes S_{q} \to \bigwedge^{p-1} \widetilde S_1 \otimes S_{q+1} \to \cdots.\]
  Observe that
  \[ \bigwedge^p \widetilde S_1 = \bigoplus_i \bigwedge^{i} S_1 \otimes \epsilon \bigwedge^{p-i} J_1.\]
  The action of \(\epsilon \bigwedge^{p-i} J_1\) on \(\epsilon J_q\) and on \(S_q\) is zero.
  As a result, we see that
  \begin{align*}
    H^q(\mathcal{S}) &= \bigoplus_i \bigwedge^{p-i} J_1 \otimes K_{i,q}(C, L, K_C \otimes L^{-1}), \text{ and }\\
    H^q(\mathcal{Q}) &= \bigoplus_i \bigwedge^{p-i} J_1 \otimes K_{i,q}(C, K_C \otimes L^{-1}).
  \end{align*}

  The short exact sequence of complexes
  \[ 0 \to \mathcal{S} \to \mathcal{K} \to \mathcal{Q} \to 0\]
  induces a long exact sequence in cohomology
  \begin{equation}\label{eq:les}
    \cdots \to H^{q-1}(\mathcal{Q}) \to H^q(\mathcal{S}) \to H^q(\mathcal{K}) \to \cdots.
  \end{equation}
  Consider the connecting map
  \[ \bigoplus_i \bigwedge^{p+1-i} J_1 \otimes K_{i,q-1}(C, K_C \otimes L^{-1}) \to \bigoplus_i \bigwedge^{p-i} J_1 \otimes K_{i,q}(C, L, K_C \otimes L^{-1}).\]
  It is easy to check that this map is diagonal, that is, the direct sum of maps
  \begin{equation}\label{eqn:conn}
    \bigwedge^{p+1-i} J_1 \otimes K_{i,q-1}(C, K_C \otimes L^{-1}) \to \bigwedge^{p-i} J_1 \otimes K_{i,q}(C, L, K_C \otimes L^{-1}).
  \end{equation}
  Note that \(J_{1} = H^0(C, K_C)\) and
  \[ K_{i,q}(C, L, K_C \otimes L^{-1}) = K_{i,q-1}(C, K_C, K_C \otimes L^{-1}),\]
  so the map \eqref{eqn:conn} becomes
  \begin{equation}\label{eq:conmap}
    \bigwedge^{p+1-i} H^0(K_C) \otimes K_{i,q-1}(C, K_C \otimes L^{-1}) \to \bigwedge^{p-i} H^0(K_C) \otimes K_{i,q-1}(C, K_C, K_C \otimes L^{-1}).
  \end{equation}
  
  For \(q = 2\), this is precisely the map \(\Phi_{p-i, i, 1}\) from \eqref{Phi}.

  We are now ready to prove the equivalence of (1) and (2).
  By \Cref{linear series Cliff of a split ribbon}, we know that \(\LCliff(\widetilde C) = 2m-2\).
  By \Cref{lin and res}, we know that \(\RCliff(\widetilde C) \leq \LCliff(\widetilde C)\).
  So, (1) is equivalent to the vanishing
  \[ K_{2m-3, 2}(\widetilde C, K_{\widetilde C}) = 0.\]
  Taking \(p = 2m-3\) and \(q = 2\), we have
  \[ K_{2m-3, 2}(\widetilde C, K_{\widetilde C}) = H^q(\mathcal{K}).\]
  Observe that we have
  \[ H^2(\mathcal{Q}) = \bigoplus_{i=0}^{2m-3} \bigwedge^{2m-3-i}H^0(K_C) \otimes K_{i,2}(C,K_C\otimes L^{-1}).\]
  By our hypothesis, we have
  \[ \deg(L^{-1}) = p_a - 2g+1 \geq 2m,\]
  and therefore \(\deg(K_C \otimes L^{-1}) \geq 2g+2m-2.\)
  By \cite[Theorem~4.a.1]{G84}, for \(0 \leq i \leq 2m-3\), we have
  \[K_{i,2}(C,K_C\otimes L^{-1}) = 0. \]
  So \(H^2(\mathcal{Q}) = 0\).
  By the long exact sequence \eqref{eq:les}, the vanishing of \(H^2(\mathcal{K})\) is equivalent to the surjectivity of
  \[ H^1(\mathcal{Q}) \to H^2(\mathcal{S}).\]
  By \eqref{eq:conmap}, this surjectivity is equivalent to (2).

  The equivalence of (2) and (3) is \Cref{surjectivity as koszul cohomology}.
\end{proof}


In light of \Cref{green for split}, it is natural to ask the following.
\begin{question}\label{split ribbon conjecture}
Are the equivalent statements in \Cref{green for split} are true ?   
\end{question}

We observe that they are true for hyperelliptic curves.

\begin{proposition}\label{split over hyperelliptic}
The conditions in \Cref{green for split} hold for a hyperelliptic curve $C$.
\end{proposition}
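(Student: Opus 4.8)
The plan is to verify statement $(3)$ of \Cref{green for split}. Since $C$ is hyperelliptic its gonality is $m=2$, so $2m-3=1$ and only the pairs $(i,j)=(1,0)$ and $(0,1)$ occur; it therefore suffices to establish $K_{1,1}(M^{0})=0$ and $K_{0,1}(M^{1})=0$. Throughout I would write $B=K_C\otimes L^{-1}$, so that $\deg B=p_a-1\geq 2g+2$ and $H^1(B)=0$, let $\pi\colon C\to\mathbb{P}^1$ be the hyperelliptic double cover, and put $\mathcal H=\pi^{*}\mathcal O_{\mathbb{P}^1}(1)$. The decisive hyperelliptic input is that $K_C=\pi^{*}\mathcal O_{\mathbb{P}^1}(g-1)$, so the evaluation sequence of $K_C$ is pulled back from that of $\mathcal O_{\mathbb{P}^1}(g-1)$; since the kernel bundle of $\mathcal O_{\mathbb{P}^1}(g-1)$ is $\mathcal O_{\mathbb{P}^1}(-1)^{\oplus(g-1)}$, the kernel bundle $M_{K_C}:=\ker\bigl(H^0(K_C)\otimes\mathcal O_C\to K_C\bigr)$ splits as $(\mathcal H^{-1})^{\oplus(g-1)}$.

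For the case $(i,j)=(1,0)$ the computation should be immediate: by definition $M^{0}_{0}=K_{0,1}(C,\mathcal O_C,B)=\operatorname{coker}\bigl(H^0(B)\otimes H^0(\mathcal O_C)\to H^0(B)\bigr)=0$, while $M^{0}_{1}=K_{0,1}(C,K_C,B)=\operatorname{coker}\bigl(H^0(B)\otimes H^0(K_C)\to H^0(K_C\otimes B)\bigr)$ vanishes because, by the splitting of $M_{K_C}$, $H^1(M_{K_C}\otimes B)=\bigoplus^{g-1}H^1(\mathcal H^{-1}\otimes B)=0$, the twist having degree $p_a-3>2g-2$. Hence $M^{0}$ is concentrated in degrees $\geq 2$ and so $K_{1,1}(M^{0})=0$.

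The case $(i,j)=(0,1)$ is the heart of the matter, and I would handle it by passing to the kernel bundle $M_B=\ker\bigl(H^0(B)\otimes\mathcal O_C\to B\bigr)$. For any line bundle $A$ on $C$ the exact sequence $0\to\bigwedge^2 M_B\otimes A\to\bigwedge^2 H^0(B)\otimes A\to M_B\otimes A\otimes B\to 0$ identifies $K_{1,1}(C,A,B)$ with $\operatorname{coker}\bigl(\bigwedge^2 H^0(B)\otimes H^0(A)\to H^0(M_B\otimes A\otimes B)\bigr)$, and under these identifications $\Phi_{0,1,1}$ is the map induced by the multiplication $H^0(M_B\otimes B)\otimes H^0(K_C)\to H^0(M_B\otimes K_C\otimes B)$. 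Since the Koszul contribution $\bigwedge^2 H^0(B)\otimes H^0(K_C)$ factors through $H^0(M_B\otimes B)\otimes H^0(K_C)$, the vanishing $K_{0,1}(M^{1})=0$ becomes equivalent to surjectivity of $H^0(K_C)\otimes H^0(M_B\otimes B)\to H^0(M_B\otimes K_C\otimes B)$; tensoring the evaluation sequence of $K_C$ by $M_B\otimes B$, this reduces to $H^1(M_{K_C}\otimes M_B\otimes B)=0$, that is, using the splitting of $M_{K_C}$, to
\[ H^1\bigl(M_B\otimes B\otimes\mathcal H^{-1}\bigr)=0. \]

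The remaining step, and the one I expect to need the most care, is a slope estimate. Since $\deg B\geq 2g+1$, Butler's theorem gives that $M_B$ is semistable, of slope $-(p_a-1)/(p_a-g-1)$; hence $M_B^{\vee}\otimes B^{-1}\otimes\mathcal H\otimes K_C$ is semistable of slope $(p_a-1)/(p_a-g-1)-(p_a-2g-1)$, which is negative precisely when $p_a^{2}-(3g+3)p_a+2g^{2}+3g+2>0$. The larger root of this quadratic is smaller than $2g+3$ — indeed the quadratic equals $2$ at $p_a=2g+3$ — so the hypothesis $p_a\geq\max\{2g+3,\,6g-4\}$ guarantees the inequality. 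A semistable bundle of negative slope on a curve has no nonzero global section, so $H^1(M_B\otimes B\otimes\mathcal H^{-1})=0$ by Serre duality, which completes the verification. Note that the slope inequality is essentially sharp for this argument, and that hyperellipticity is used only through the splitting $M_{K_C}\cong(\mathcal H^{-1})^{\oplus(g-1)}$, which fails for a general curve.
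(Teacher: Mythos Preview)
Your argument is correct and follows essentially the same route as the paper: both reduce the main case $(i,j)=(0,1)$ to the surjectivity of
\[
H^0(K_C)\otimes H^0(M_B\otimes B)\;\longrightarrow\;H^0(M_B\otimes K_C\otimes B),
\]
and then appeal to Butler. The paper cites \cite[Proposition~2.2]{But94} directly---and in fact proves the $i=0$ case for \emph{any} curve of gonality $m$, not only hyperelliptic ones---whereas you unpack the same step via the hyperelliptic splitting $M_{K_C}\cong(\mathcal H^{-1})^{\oplus(g-1)}$ together with Butler's semistability of $M_B$ and an explicit slope computation. You also spell out the easy case $(i,j)=(1,0)$, which the paper leaves implicit. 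One small remark: your claim that the vanishing of $K_{0,1}(M^1)$ is \emph{equivalent} to the displayed surjectivity is a bit stronger than needed; ``is implied by'' is what you use and what you prove, and is exactly what the paper asserts (``it is enough to show'').
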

\begin{proof}
  We begin with a more general statement.
  Let \(C\) be a smooth curve of genus \(g\) and let \(m\) be its gonality.
  We show that the map
  \[\bigwedge^{i+1} H^0(K_C) \otimes K_{j,1}(C, K_C\otimes L^{-1}) \to \bigwedge^i H^0(K_C) \otimes K_{j,1}(C, K_C, K_C\otimes L^{-1})\]
  is surjective for \(i = 0\) and \(j = 2m-3\).
  To see this, we use the results and notation of \cite{But94}.
  For line bundles \(A\) and \(B\), the kernel of the map
  \[\bigwedge^{j}H^0(A) \otimes H^0(B) \to \bigwedge^{j-1} H^0(A) \otimes H^0(A \otimes B)\]
  is \(H^0(\bigwedge^j M_A \otimes A \otimes B)\) (see, for example, \cite[Lemma $1.4$]{EL93}).
  So it is enough to show that for $j = 2m-3$, the map
  \[H^0(K_C) \otimes H^0\left(\bigwedge^j M_{(K_C \otimes L^{-1})} \otimes K_C \otimes L^{-1}\right) \to H^0\left(\bigwedge^j M_{(K_C \otimes L^{-1})} \otimes K^2_C \otimes L^{-1}\right)\]
  is surjective.
  Since $K_C\otimes L^{-1}$ is semistable of degree (= slope) at least \(2g\), the bundle \(M_{K_C \otimes L^{-1}}\) is semistable (\cite[Theorem $1.2$]{But94}).
  The surjection follows from \cite[Proposition $2.2$]{But94}.
\end{proof}

We end with an example of a split ribbon that does \emph{not} satisfy Green's conjecture.
\begin{example}\label{does not satisfy Green}
  Let \(C\) be a general curve of genus 3, and take \(L = K_C^{-1}\).
  Note that \(C\) has gonality \(3\).
  Let \(\widetilde C\) be the split ribbon with conormal bundle \(L\).
  Then \(p_a(\widetilde C) = 9\).
  By \Cref{linear series Cliff of a split ribbon}, we have \(\LCliff(\widetilde C) = 4\).
  A \texttt{Macaulay2} computation---done over the base field \(\mathbf{Z}/101 \mathbf{Z}\) with a randomly chosen plane quartic \(C\)---shows that the minimal free resolution of the canonical embedding of \(\widetilde C\) has the betti table
  \[
    \begin{matrix}
 & 0 & 1 & 2 & 3 & 4 & 5 & 6 & 7\\
\text{total:} & 1 & 21 & 84 & 154 & 154 & 84 & 21 & 1\\
0: & 1 & . & . & . & . & . & . & .\\
1: & . & 21 & 64 & 90 & 64 & 20 & . & .\\
2: & . & . & 20 & 64 & 90 & 64 & 21 & .\\
3: & . & . & . & . & . & . & . & 1
    \end{matrix},\]
  which means \(\RCliff(\widetilde C) = 2\).
\end{example}

\color{black}

\bibliographystyle{plain}

\end{document}